\documentclass[11pt, letterpaper]{amsart}
\usepackage{amsmath}
\usepackage{amssymb}
\usepackage[left=1in,right=1in,bottom=0.8in,top=0.8in]{geometry}
\usepackage{amsfonts}
\usepackage{graphicx}
\usepackage[font=small,labelfont=bf]{caption}
\usepackage{epstopdf}
\usepackage{hyperref}
\usepackage[dvipsnames]{xcolor}
\usepackage{amsthm}
\usepackage{float}
\usepackage{pgfplots}
\usepackage{listings}
\usepackage{longtable}
\usepackage{mathrsfs}
\usepackage{bbm}
\usepackage{euscript}
\usepackage{mathtools}
\usepackage{tikz}
\usepackage{comment}
\usepackage{tikz-cd}
\usepackage{amsxtra}
\usepackage{pifont}
\usepackage{amsbsy}
\usepackage{wasysym}
\usepackage{calligra}
\usepackage{url}
\usepackage{enumerate}
\usepackage[
backend=biber,
style=alphabetic,
]{biblatex}

\makeatletter
\renewcommand{\tocsection}[3]{%
  \indentlabel{\@ifnotempty{#2}{\bfseries\ignorespaces#1 #2.\quad}}\bfseries#3}
  \renewcommand{\tocsubsection}[3]{%
  \indentlabel{\hspace{2.5em}\@ifnotempty{#2}{\ignorespaces#1 #2.\quad}}#3}
\makeatother

\newtheorem{theorem}{Theorem}[section]
\newtheorem{corollary}[theorem]{Corollary}
\newtheorem{proposition}[theorem]{Proposition}
\newtheorem{lemma}[theorem]{Lemma}

\theoremstyle{definition}
\newtheorem{definition}[theorem]{Definition}

\theoremstyle{remark}
\newtheorem{remark}[theorem]{Remark}

\definecolor{energy}{RGB}{114,0,172}
\definecolor{refblue}{RGB}{30,90,180}
\definecolor{freq}{RGB}{45,177,93}
\definecolor{spin}{RGB}{251,0,29}
\definecolor{signal}{RGB}{203,23,206}
\definecolor{circle}{RGB}{217,86,16}
\definecolor{average}{RGB}{203,23,206}

\colorlet{shadecolor}{gray!20}
\pgfplotsset{compat=1.9}

\usepgflibrary{fpu}

\newcommand{\RR}{\mathbb{R}}
\newcommand{\ZZ}{\mathbb{Z}}
\newcommand{\NN}{\mathbb{N}}
\newcommand{\KK}{\mathbb{K}}
\newcommand{\QQ}{\mathbb{Q}}
\newcommand{\CC}{\mathbb{C}}

\newcommand{\PP}{\mathbb{P}}

\newcommand{\DD}{\mathbb{D}}
\newcommand{\bA}{\mathbb{A}}

\newcommand{\cB}{\mathcal{B}}

\newcommand{\cD}{\mathcal{D}}
\newcommand{\cE}{\mathcal{E}}
\newcommand{\cF}{\mathcal{F}}

\newcommand{\cH}{\mathcal{H}}

\newcommand{\cL}{\mathcal{L}}

\newcommand{\cO}{\mathcal{O}}

\newcommand{\cT}{\mathcal{T}}

\newcommand{\sfA}{\mathsf A}
\newcommand{\sfF}{\mathsf F}

\newcommand{\an}{\operatorname{an}}

\newcommand{\Bl}{\operatorname{Bl}}

\newcommand{\codim}{\operatorname{codim}}

\newcommand{\End}{\operatorname{End}}

\newcommand{\cEnd}{\mathscr{E}\text{\kern -3pt {\calligra\large nd}}\,}
\newcommand{\cBun}{\mathscr{B}\text{\kern -3pt {\calligra\large un}}\,}
\newcommand{\cCoh}{\mathscr{C}\text{\kern -3pt {\calligra\large oh}}\,}

\newcommand{\Gr}{\operatorname{Gr}}

\newcommand{\cHom}{\mathscr{H}\text{\kern -3pt {\calligra\large om}}\,}
\newcommand{\id}{\operatorname{id}}

\newcommand{\im}{\operatorname{im}}

\newcommand{\Pic}{\operatorname{Pic}}

\newcommand{\pr}{\operatorname{pr}}

\newcommand{\red}{\operatorname{red}}

\newcommand{\rank}{\operatorname{rank}}

\newcommand{\ev}{\operatorname{ev}}

\newcommand{\Spec}{\operatorname{Spec}}
\newcommand{\Sym}{\operatorname{Sym}}

\newcommand{\tr}{\operatorname{Tr}}

\newcommand{\hdg}{\mathrm{Hdg}}

\newcommand{\Eu}{\operatorname{Eu}}

\author[Ting Gong]{Ting Gong}
\email{tgong2@uw.edu}

\author[Shitan Xu]{Shitan Xu}
\email{shitanxu@bicmr.pku.edu.cn}

\title{Irrationality of the first stabilization of a complex smooth cubic threefold}
\begin{document}
                                                           
\begin{abstract}
 We show that for every smooth complex cubic threefold $Y$, the fourfold $Y\times\PP^1$ is irrational. As an application, we construct a smooth stably rational complex Fano variety $X$ which is irrational, while $X^m$ is rational for every $m\geq2$ and $\Sym^2 X$ is rational.
\end{abstract}

\maketitle

\section{Introduction}
Rationality questions have a long history in algebraic geometry, and smooth complex cubic threefolds have played a central role in their development. The irrationality of every smooth complex cubic threefold is classical \cite{CG}. Stable rationality is subtler: very general cubic threefolds are not stably rational \cite{EFS}, while Tschinkel and Zhang recently constructed stably rational examples \cite[Theorem~1.3]{TZ}. We show that the first stabilization is nevertheless irrational for every smooth cubic threefold.

\begin{theorem}\label{thm:main}
For every smooth cubic threefold $Y\subset\PP^4_\CC$, the product $Y\times\PP^1$ is not rational.
\end{theorem}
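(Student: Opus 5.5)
The plan is to argue by contradiction, using the weak factorization theorem together with a Hodge-theoretic invariant that is finer than the bare intermediate Jacobian $J^3$. Suppose $X=Y\times\PP^1$ is rational. Weak factorization connects $X$ and $\PP^4$ by a chain of blow-ups and blow-downs along smooth centers. For fourfolds these centers are points, curves and surfaces. By the blow-up formula, a curve $C$ adds $H^1(C)(-1)$ to $H^3$ and $H^3(C)$ to $H^5$. A surface $S$ adds $H^1(S)(-1)$ to $H^3$ and $H^3(S)(-1)$ to $H^5$. Points contribute nothing to odd cohomology. Hence the weight-one Hodge structure $H^3(X)(1)=H^3(Y)(1)$, that is, $JY$, is isogenous to a sum of Jacobians of curves and Albanese/Picard varieties of surfaces.

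This splitting alone cannot suffice. The Clemens--Griffiths argument fails in dimension four because $H^3$ carries no intrinsic unimodular pairing. In addition, the stably rational examples of \cite{TZ} show that no isogeny-level obstruction can work for every $Y$. The invariant I would use instead is the trilinear map
\[
\Phi_X\colon H^2(X,\ZZ)\times H^3(X,\ZZ)\times H^3(X,\ZZ)\to H^8(X,\ZZ)=\ZZ,\qquad (D,\alpha,\beta)\mapsto\int_X D\cup\alpha\cup\beta .
\]
Equivalently, $\Phi_X$ is a linear map from $H^2(X,\ZZ)$ to alternating forms on $H^3(X,\ZZ)$, all of them compatible with the Hodge structure. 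For $X=Y\times\PP^1$ we have $H^2=\ZZ h_Y\oplus\ZZ p$, where $p$ is the pullback of the point class of $\PP^1$. Then $\Phi(p,-,-)$ is the principal polarization $\theta_Y$ of $JY$, and $\Phi(h_Y,-,-)=0$ because $H^8(Y)=0$.

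The invariant statement I would aim for is the following. Call a class $D$ \emph{principal} if $\Phi_X(D,-,-)$ is unimodular on a Hodge sub-lattice. Then the ppav's arising from principal classes on unimodular direct summands of $H^3$ are, up to products with Jacobians of curves, unchanged under blow-ups along smooth centers. I would verify this center by center. Under $\pi\colon\tilde X\to X$ with exceptional divisor $E$ over a curve $C$, one has $H^2(\tilde X)=\pi^*H^2(X)\oplus\ZZ E$. Pulled-back classes pair trivially with the new summand $H^1(C)$, while $E$ pairs with it via $\pm$ the principal polarization of $JC$. This is a local computation on the $\PP^2$-bundle $E\to C$. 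For a surface center $S$, the exceptional divisor $E$ is a $\PP^1$-bundle over $S$. A pulled-back class $D$ pairs with the new summand $H^1(S)$ through $D|_S\cup$ on $H^1(S)$, which is a polarization only up to the degree of $D|_S$. The class $E$ pairs with it through $c_1(N_{S/X})\cup$.

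Once this invariance is established, the contradiction is the classical one. For $\PP^4$ the relevant structure is trivial. For $X$, the principal class $p$ produces $(JY,\theta_Y)$. Following the chain, this ppav would have to be a product of Jacobians of curves, and Clemens--Griffiths \cite{CG} rules that out for every smooth cubic threefold.

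The main obstacle is the surface centers. There the new pieces $H^1(S)$ come with forms $D|_S\cup(-)$ that are generally not principal, and they can interact with $JY$ through non-split extensions of lattices. One must therefore show that a unimodular restriction of $\Phi$ that contains $\theta_Y$ cannot be rebuilt from Albanese pieces equipped with non-principal polarizations. My first attempt would be an indecomposability argument: $(JY,\theta_Y)$ is an indecomposable ppav, and a principally polarized abelian subvariety of a product of polarized pieces splits off orthogonally (the Clemens--Griffiths lemma). I would apply this lemma to the pencil of forms $\Phi(D,-,-)$ rather than to a single form. If this fails in general, I would fall back on analysing the classes $D$ for which $\Phi(D,-,-)$ restricted to $JY$ is a multiple of $\theta_Y$. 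I would then use that only the fiber class of $Y\times\PP^1$ induces it with multiplicity one.
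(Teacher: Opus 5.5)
\textbf{Genuine gap.} Everything rests on your claim that the principally polarized pieces of $H^3$ cut out by classes in $H^2$ are, up to Jacobians, unchanged under blow-ups, and this is never proved. You leave the surface centers open yourself. That includes the cross terms $\int E\cdot\pi^*\alpha\cdot j_*p^*\gamma=\pm\int_S\alpha|_S\cup\gamma$, which glue $H^3(X)$ to the new $H^1(S)$. Weak factorization also uses blow-downs, so you would need an actual invariant of the fourfold preserved in both directions, and none is formulated.

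Worse, the claim already fails for curve centers, so no work on surfaces can save it. The theorem must cover the stably rational cubics of \cite{TZ}, and for those, by a theorem of Voisin, the minimal class $\theta_Y^4/4!$ is algebraic on $JY$. Write it as $\sum_i n_i f_{i*}[C_i]$ with smooth curves $f_i\colon C_i\to JY$ and $n_i\in\ZZ$. Take $|n_i|$ copies $C_{i,k}$ of each $C_i$, embed them all disjointly in $\PP^4$, and let $X'$ be the blow-up along their union. Then $X'$ is a rational fourfold with $H^3(X',\ZZ)=\bigoplus_{i,k}H^1(C_{i,k},\ZZ)(-1)$. By your own local computation, each $E_{i,k}$ acts on its summand by $\varepsilon\,\theta_{C_{i,k}}$ for a universal sign $\varepsilon$, and kills the other summands. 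Hence $D=\varepsilon\sum_{i,k}\mathrm{sgn}(n_i)E_{i,k}$ acts on $H^3(X')$ by the unimodular form $\bigoplus_{i,k}\mathrm{sgn}(n_i)\theta_{C_{i,k}}$. The Hodge morphism $\gamma\mapsto(f_i^*\gamma)_{i,k}$ pulls this form back to $\sum_i n_i\int_{C_i}f_i^*\gamma\cup f_i^*\delta=\int_{JY}(\theta_Y^4/4!)\cup\gamma\cup\delta$, which is $\theta_Y$ up to the standard sign; replace $D$ by $-D$ if needed. So its image is a Hodge direct summand isometric to $(H^1(JY,\ZZ),\theta_Y)$. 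A rational fourfold therefore carries $(JY,\theta_Y)$ as a principal piece in exactly your sense, although it is not a product of Jacobians \cite{CG}. Your concluding step, that this ppav would have to be a product of Jacobians, cannot be derived.

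The Clemens--Griffiths lemma does not help. It governs decompositions of a ppav into principally polarized factors, whereas $(JY,\theta_Y)$ here is a principal summand of a lattice carrying the indefinite form $\bigoplus\mathrm{sgn}(n_i)\theta_{C_{i,k}}$ and is not one of its factors. A pencil version of indecomposability therefore cannot exclude it. Requiring $\Phi(D)$ to be a polarization on all of $H^3$ would block this particular example: all $n_i>0$ would make the minimal class effective, contradicting Matsusaka--Ran. But you would then have to propagate positivity through blow-downs and through surface centers, whose forms $\int_S(D|_S+k\,c_1(N_{S/X}))\cup-\cup-$ need not be definite, and the proposal contains no such argument.

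The paper avoids the Hodge theory of $H^3$ altogether. Its invariant $n_{\mathrm{cub}}$ counts separated factors of the quantum connection of full rank $12$ and Hodge-class rank $2$ whose lattice $L^\sharp$ has residue polynomial $(t-1/6)(t-5/6)$. These exponents come from genus-zero invariants of lines on $Y$ and separate the cubic factor from curve factors, which have residue $(t-1/2)^2$; this is enumerative information the classical cup product does not see. Iritani's decomposition then gives $n_{\mathrm{cub}}(\Bl_ZX)=n_{\mathrm{cub}}(X)+(r-1)n_{\mathrm{cub}}(Z)$, surfaces contribute $0$ by classification, and $n_{\mathrm{cub}}(Y\times\PP^1)=2$ while rational fourfolds have $n_{\mathrm{cub}}=0$.
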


Together with the construction of Tschinkel and Zhang \cite{TZ}, Theorem~\ref{thm:main} gives the following consequences.

\begin{corollary}
\label{cor:fiber product}
    There exists a smooth projective complex variety $X$ such that, for every $m\geq 2$, the $m$-fold self-product $X^m:=\underbrace{X\times\cdots\times X}_{m\text{ factors}}$ is rational, while $X$ itself is not rational.
\end{corollary}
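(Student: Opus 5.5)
Take $X=Y\times\PP^1$, where $Y$ is a smooth cubic threefold that is stably rational; such cubics exist by the construction of Tschinkel and Zhang \cite[Theorem~1.3]{TZ}. Then $X$ is smooth and projective, and Theorem~\ref{thm:main} says directly that $X$ is not rational. What remains is to prove that $X^m$ is rational for every $m\geq 2$.

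Since $Y$ is stably rational, there is some $N$ with $Y\times\PP^N$ rational. I will first check, or extract from \cite{TZ}, that $N$ can be taken small. The expected statement is that $Y\times\PP^2$ is rational. The reason to hope for this is that the Tschinkel--Zhang argument passes through a degeneration or an explicit birational model, such as a conic bundle or a quadric surface bundle over a rational base. In that argument one typically shows directly that $Y\times\PP^2$ is rational, since the stabilization by $\PP^2$ is what absorbs the Brauer class or trivializes the torsor. If their theorem only gives $Y\times\PP^{N}$ for some larger $N$, the plan still works for $m$ large, but this would not cover $m=2$. So pinning down $N\le 2$ is the crucial input.

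Assuming $Y\times\PP^2$ is rational, the case $m=2$ goes as follows. We have
\[
X^2 = Y\times Y\times\PP^1\times\PP^1 \sim_{\mathrm{bir}} (Y\times\PP^2)\times Y .
\]
The right-hand side is birational to $\PP^5\times Y$, hence to $Y\times\PP^5$. This contains $Y\times\PP^2$ as a factor, so it is birational to $\PP^3\times\PP^5$, which is rational. For general $m\geq2$, write
\[
X^m\sim_{\mathrm{bir}} Y^m\times\PP^m ,
\]
and absorb the copies of $Y$ one at a time. Using $Y\times\PP^2\sim\PP^5$ once, we get $Y^m\times\PP^m\sim Y^{m-1}\times\PP^{m+3}$. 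Each subsequent step consumes a $\PP^2$ and returns a $\PP^5$, so the dimension of the projective factor only grows and the induction never runs out of room. The starting factor $\PP^m$ with $m\ge2$ is exactly enough to begin the induction, and this is why $m=1$ fails.

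The main obstacle is therefore the precise level of stable rationality in \cite{TZ}: whether $Y\times\PP^2$, as opposed to some higher stabilization, is rational. I would first look for it explicitly in their theorem. If it is not stated there, I would re-derive it from their construction by tracking how many projective factors are actually needed.
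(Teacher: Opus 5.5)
Your absorption argument is correct \emph{conditionally}, but as it stands the proof has a genuine gap. Everything hinges on $Y\times\PP^2$ being rational for a Tschinkel--Zhang cubic, and nothing available here supplies that. The only input used is that $Y$ is stably rational, i.e.\ that $Y\times\PP^N$ is rational for \emph{some} $N$. Your remarks about conic bundles and Brauer classes are hopes, not an argument.

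If the known level is $N\ge 3$, then with $X=Y\times\PP^1$ the product $X^m\sim Y^m\times\PP^m$ has no projective factor large enough to absorb even one copy of $Y$ for $m<N$. So your choice of $X$ fails already at $m=2$. Nor can you simply take $X=Y\times\PP^{N-1}$ for an arbitrary known $N$, since its irrationality would then be unknown unless $N$ is minimal.

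The missing idea is to let the exponent be dictated by the unknown level rather than fixing it in advance. Let $l$ be the largest integer with $Y\times\PP^l$ irrational. This is well defined for two reasons. First, the set of such $l$ is downward closed, because rationality of $Y\times\PP^r$ implies that of $Y\times\PP^{r'}$ for $r'>r$. Second, it is bounded by stable rationality. Moreover, Theorem~\ref{thm:main} gives $l\ge 1$.

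Take $X=Y\times\PP^l$. Then $X$ is irrational by the choice of $l$. For $m\ge 2$ one has $X^m\sim Y^m\times\PP^{lm}$ with $lm\ge 2l>l$, so $Y\times\PP^{lm}$ is rational and your induction runs verbatim:
\[
Y^m\times\PP^{lm}\sim Y^{m-1}\times\PP^{lm+3}\sim\cdots\sim\PP^{m(l+3)}.
\]

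Note how the role of the main theorem changes. Irrationality of $X$ is built into the definition of $l$. Theorem~\ref{thm:main} is needed precisely to guarantee $l\ge 1$: if $l$ were $0$, then $X=Y$ and $X^m=Y^m$ would have no projective factor to start the absorption. This is the paper's proof, and it needs no information about the precise stabilization level in \cite{TZ}.
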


\begin{corollary}\label{cor:applications}
    There exists a smooth stably rational complex Fano variety $X$ which is not rational, while $X^m$ is rational for every $m\geq2$ and $\Sym^2X$ is rational.
\end{corollary}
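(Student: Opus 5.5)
We will take $X$ to be a product $Y\times\PP^k$ for a suitably chosen smooth cubic threefold $Y$ and integer $k\geq 1$. Theorem~\ref{thm:main} supplies the irrationality of $X$. The construction of Tschinkel--Zhang supplies stable rationality and the rationality of the powers. A classical birational description of $\Sym^2$ of a cubic handles the symmetric square.

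\emph{Step 1 (choice of $X$).} By \cite[Theorem~1.3]{TZ} there is a smooth cubic threefold $Y\subset\PP^4_\CC$ and an integer $N$ such that $Y\times\PP^N$ is rational. Rationality of $Y\times\PP^n$ is monotone in $n$, since $Y\times\PP^{n+1}\sim_{\mathrm{bir}}(Y\times\PP^n)\times\PP^1$. Hence there is a largest $k$ with $Y\times\PP^k$ irrational. By Theorem~\ref{thm:main} we have $k\geq 1$, and $Y\times\PP^{n}$ is rational for every $n\geq k+1$. Set $X:=Y\times\PP^k$. Then $X$ is smooth and projective, and it is Fano as a product of Fano varieties. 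It is stably rational and not rational by the choice of $k$.

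\emph{Step 2 (powers).} We first show by induction on $j\geq 1$ that $Y^j\times\PP^M$ is rational whenever $M\geq k+1$. For $j=1$ this is Step~1. For the inductive step, write
\[ Y^j\times\PP^M=Y^{j-1}\times(Y\times\PP^M). \]
Here $Y\times\PP^M$ is rational of dimension $3+M\geq k+1$, so it is birational to $\PP^{3+M}$, and the induction hypothesis applies to $Y^{j-1}\times\PP^{3+M}$. Now $X^m\sim_{\mathrm{bir}} Y^m\times\PP^{km}$, and for $m\geq 2$ we have $km\geq 2k\geq k+1$ because $k\geq1$. Therefore $X^m$ is rational.

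\emph{Step 3 (symmetric square).} We first compare $\Sym^2 X$ with $\Sym^2 Y\times\PP^{2k}$. The variety $\Sym^2(Y\times\PP^k)$ maps to $\Sym^2 Y$. Over the generic point, with function field $K$, the fiber is the Weil restriction $R_{L/K}\PP^k$, where $L/K$ is the quadratic extension given by $Y^2\to\Sym^2Y$. This Weil restriction contains the open subset $R_{L/K}\bA^k\cong\bA^{2k}_K$. Hence
\[ \Sym^2X\sim_{\mathrm{bir}}\Sym^2Y\times\PP^{2k}. \]
Next we use the classical third-point construction. Two general points $p,q\in Y$ span a line meeting $Y$ in a residual point $r$. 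Conversely, a point $r\in Y$ and a general line through $r$ recover the unordered pair $\{p,q\}$. This identifies $\Sym^2Y$ birationally with the $\PP^3$-bundle of lines through points of $Y$. That bundle is Zariski-locally trivial, so $\Sym^2 Y\sim_{\mathrm{bir}}Y\times\PP^3$. Combining,
\[ \Sym^2X\sim_{\mathrm{bir}} Y\times\PP^{3+2k}, \]
which is rational since $3+2k\geq k+1$.

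The main point requiring care is Step~3. One must check that the residual-point map is genuinely birational, i.e.\ generically injective, and that the projective bundle is locally trivial. One must also verify that the Weil-restriction fiber is rational over the function field rather than merely geometrically rational. Steps~1 and~2 are formal once Theorem~\ref{thm:main} and \cite{TZ} are granted.
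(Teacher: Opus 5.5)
Your proposal is correct and follows the paper's proof. It uses the same choice $X=Y\times\PP^k$ with $k$ maximal such that $Y\times\PP^k$ is irrational (via \cite{TZ} and Theorem~\ref{thm:main}), the same absorption argument for $X^m$ as in Corollary~\ref{cor:fiber product}, and the same chain $\Sym^2X\overset{\mathrm{bir}}{\sim}\Sym^2Y\times\PP^{2k}\overset{\mathrm{bir}}{\sim}Y\times\PP^{2k+3}$. The only difference is that you justify these two birational equivalences directly, via the Weil restriction $R_{L/K}\PP^k$ and the residual-point construction, whereas the paper cites \cite[Lemma~2.4]{kol18} and \cite[Exercise~3.10]{KollarCubicNotes}.
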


Our proof uses the analytic quantum connection directly. The zero factor of a cubic threefold has full rank twelve and Hodge-class rank two, as does the quantum connection of a genus-five curve. Following the local computation of \cite[Section~3]{Cai}, we distinguish them by the residue polynomials
$$P_{\mathrm{cub}}(T)=(T-1/6)(T-5/6),\qquad P_{\mathrm{curve}}(T)=(T-1/2)^2$$
on specified logarithmic lattices. Counting factors of the first type gives an integer $n_{\mathrm{cub}}(X)$ satisfying

$$ n_{\mathrm{cub}}(\Bl_ZX)  =n_{\mathrm{cub}}(X)+(r-1)n_{\mathrm{cub}}(Z), \qquad r=\codim_XZ\geq2$$
 $$n_{\mathrm{cub}}(S)=0\quad(\dim S\leq2),\qquad  n_{\mathrm{cub}}(Y\times\PP^1)=2.$$

The blowup formula follows from the local analytic comparison proved in Theorem~\ref{thm:local-comparison}, starting from Iritani's formal decomposition \cite{iritani2025quantumcohomologyblowups} and its Hodge compatibility \cite{iritani2026notesdecompositiontheoremblowups}. We verify the convergence domains, openness of the parameter map, and preservation of the full lattices directly. Surface classification gives the low-dimensional vanishing. Weak factorization then shows that $n_{\mathrm{cub}}$ is a birational invariant in dimension at most four, proving Theorem~\ref{thm:main}.

Section~\ref{sec:quantum} develops the local transport and logarithmic lattice constructions. Section~\ref{sec:nu} defines the invariant and proves its blowup formula. Section~\ref{sec:main-proof} contains the cubic calculation, surface exclusion, and proof of the main theorem. The applications are given in Section~\ref{sec:applications}.

\section*{Acknowledgements}

The authors thank the Tianyuan Mathematical Center in Kunming for its hospitality, where this work was carried out. TG thanks the organizers of the workshop \emph{Rationality and Algebraic Cycles} for the invitation and hospitality. TG also thanks BICMR for its hospitality during his visit. The authors also thank Evgeny Shinder, Zhiyu Tian, and Jan Lange for helpful discussions and suggestions that improved this work.

\section*{AI disclosure} 
During the development of this work, the authors used GPT-5.6 Sol through API calls as a research assistant. After being directed to specific examples and mathematical techniques, the model assisted in developing some of the arguments presented in this paper. The authors independently checked the mathematical arguments, rewrote the proofs, and developed the consequences presented here. AI tools were also used for grammatical corrections and improvements to the exposition. The authors take full responsibility for the content and correctness of the paper.

\section{Analytic quantum connections}\label{sec:quantum}
We briefly recall the setup for quantum cohomology, following the conventions of \cite{Guere,KKPY,HYZZ,iritani2026notesdecompositiontheoremblowups,iritani2025quantumcohomologyblowups}. We use the notation of Gu\'er\'e \cite[Sections~1 and~2.3]{Guere} for Hodge classes, quantum multiplication, and the associated spectral data. Let $X$ be a smooth projective complex variety.

\subsection{The analytic base}
For a connected smooth projective complex variety $X$, write
\[ H^*(X)^{\hdg}  =\bigoplus_p\bigl(H^{2p}(X,\QQ)\cap H^{p,p}(X)\bigr).\]
This is a rational vector space. Following \cite[Sections~1.1--1.2]{Guere}, choose graded bases $\cH=(\alpha_0,\ldots,\alpha_h)$ of $H^*(X)^{\hdg}$ and  $\cB=(\phi_0,\ldots,\phi_m)$ of $H^*(X,\QQ)$, with $\alpha_0=\phi_0=1_X$. Order $\cH$ so that $\alpha_1,\ldots,\alpha_{\rho(X)}$ span the divisor classes, where $\rho(X)=\dim N^1(X)_\QQ$. Set $d_i:=\deg\alpha_i,$ and $\deg T_i:=2-d_i.$

The Novikov monomial $Q^\beta$, for an effective integral curve class $\beta$, has degree $2c_1(X)\cdot\beta$. Following \cite[Definition~3]{Guere}, write $R^*(X,\QQ)=\QQ[[Q,T_0,\ldots,T_h]]$ for the corresponding graded completion. If $(\phi^k)$ is the Poincar\'e dual basis, for the homogeneous element $\tau=\sum_{i=0}^h T_i\alpha_i$ of degree $2$, the quantum product is defined by \cite[Definition~6]{Guere}
\[\phi_i\star_\tau\phi_j =\sum_{\beta,n,k}\frac{Q^\beta}{n!} \langle\phi_i,\phi_j,\phi_k,\tau,\ldots,\tau\rangle^X_{0,3+n,\beta}\phi^k.\]
More generally, this extends to homogeneous elements of degree $2$ as in \cite[Remark~8]{Guere}. Let
$$\Eu_\tau=c_1(X)+\sum_i\left(1-\frac{d_i}{2}\right)T_i\alpha_i$$
denote the Euler vector field, and let $\kappa_\tau=\Eu_\tau\star_\tau$ denote quantum multiplication by the Euler vector field \cite[Definitions~9 and~10]{Guere}. 

Let $\KK=\widehat{\overline{\CC((s))}}$ be a complete non-Archimedean extension of $\CC$, with valuation $v(s)=1$ and trivial valuation on $\CC$. Set 
$$ H^*(X)^{\mathrm{Hdg}}_\KK =H^*(X)^{\mathrm{Hdg}}\otimes_\QQ\KK.$$
Throughout, $H^*(X,\KK)$ means $H^*(X,\QQ)\otimes_\QQ\KK$.
 
The following standard finiteness statement explains the numerical Novikov coordinates used below.

\begin{lemma}\label{lem:finite}
There are only finitely many integral homology classes of effective curve cycles of bounded degree with respect to a fixed integral ample class. Consequently, grouping Novikov series by numerical curve class is well-defined and respects products and cohomological grading.
\end{lemma}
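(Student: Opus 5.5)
The plan is to reduce the first claim to finiteness of effective cycles in a bounded family, via the Chow variety (or Hilbert scheme), and then derive the Novikov statement formally.

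Fix an integral ample class $H$, represented by a very ample line bundle $L=\cO_X(1)$ giving an embedding $X\hookrightarrow\PP^N$; replacing $H$ by a positive multiple only rescales degrees. An effective integral curve cycle $C=\sum n_iC_i$ with $H\cdot C\le d$ is then an effective $1$-cycle of degree at most $d$ in $\PP^N$ supported on $X$. By the theory of Chow varieties, effective $1$-cycles of fixed degree $e$ in $\PP^N$ are parametrized by a projective scheme $\mathrm{Chow}_{1,e}(\PP^N)$. Those supported on $X$ form a closed subscheme $\mathrm{Chow}_{1,e}(X)$, which has finitely many connected components.

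The homology class $[C]\in H_2(X,\ZZ)$ is locally constant in algebraic families of cycles. To see this, take a family over a connected base; after passing to normalizations of irreducible components and connecting points by chains of curves, it suffices to treat a family over a smooth curve. There the class is constant because the total cycle gives a homology between fibers, by continuity of the cycle class map in flat families. Hence $[C]$ takes finitely many values on each $\mathrm{Chow}_{1,e}(X)$, and ranging over the finitely many $e\le d$ proves the first assertion.

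Grouping Novikov series by numerical class then works as follows. Let $N_1(X)$ be the image of $H_2(X,\ZZ)$ modulo torsion and numerical equivalence, and write $Q^{\bar\beta}$ for the resulting monomials. For each coefficient of a Novikov series, only classes $\beta$ with $H\cdot\beta\le d$ contribute up to any fixed $H$-degree. By the first assertion these are finitely many, so the fiber of $\beta\mapsto\bar\beta$ over each numerical class meets the effective classes in a finite set. The regrouped coefficient $\sum_{\beta\mapsto\bar\beta}(\cdot)$ is therefore a finite sum.

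Products are preserved because $Q^{\beta}Q^{\beta'}=Q^{\beta+\beta'}$ and $\beta\mapsto\bar\beta$ is additive. In a product, the coefficient of $Q^{\bar\gamma}$ is a sum over pairs of effective classes with $H$-degree bounded by $H\cdot\bar\gamma$, hence again finite. Grading is preserved because $\deg Q^\beta=2c_1(X)\cdot\beta$ depends only on the numerical class.

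The main obstacle is the local constancy of homology classes over connected but possibly reducible or non-reduced Chow varieties. I would handle it by working with the reduced structure and chains of irreducible curves as above, and otherwise cite Kollár's treatment of Chow varieties for properness and the constancy of cycle classes.
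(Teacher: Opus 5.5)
Your proposal is correct and follows essentially the same route as the paper. You pass to a very ample multiple, parametrize bounded-degree effective cycles by finitely many projective Chow varieties with finitely many connected components on which the homology class is constant, and then deduce the Novikov regrouping from additivity of $\beta\mapsto\bar\beta$ and the fact that $\deg Q^\beta=2c_1(X)\cdot\beta$ depends only on the numerical class; you add more detail than the paper on local constancy (reduction to families over smooth curves) and on finiteness of product coefficients, but the argument is the same.
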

\begin{proof}
After replacing the ample class by a positive multiple, assume it is very ample. Effective curve cycles of bounded degree are parametrized by a finite union of projective Chow varieties. Each has finitely many connected components, and the integral homology class is constant on each component. This proves the finiteness assertion.

A fixed numerical curve class has fixed ample degree, so only finitely many effective homology classes contribute to it. Addition and intersection with $c_1(X)$ factor through numerical equivalence, which proves the remaining assertions.
\end{proof}

Let $\Gamma_X$ be the lattice of integral numerical curve classes and
$T_X=(\Spec\KK[\Gamma_X])^{\an}$ its analytic torus. For a rigid point $Q\in T_X$, define $v_X(Q)\in N^1(X)_\RR$ by $v_X(Q)\cdot\beta=v(Q^\beta).$ Set
$$B_X=\{Q\in T_X:v_X(Q)\in\operatorname{Amp}(X)_\RR\} \times(\bA^1_\KK)^{\an} \times\DD_{<1}^{h-\rho(X)}.$$

Here the affine coordinate is $T_0$, while the coordinates on $\DD_{<1}^{h-\rho(X)}$ are the $T_i$ with $d_i\geq 4$. This analytic construction appears in \cite[Section~2.2.3]{HYZZ} and \cite[Section~3.5.2.1]{KKPY}. By construction, $B_X$ is smooth and connected. The Gromov--Witten potential, and hence the coefficients of the quantum product, define analytic functions on $B_X$ by \cite[Lemma~2.19]{HYZZ} and \cite[Lemma~3.29]{KKPY}.

Our base $B_X$ is the Hodge-class restriction of the maximal $\mathsf A$-model base, written in numerical Novikov coordinates with the divisor parameters absorbed. Here the Hodge-class restriction is the fixed locus of the Hodge group, whose fixed subspace in cohomology is $H^*(X)^{\mathrm{Hdg}}_\KK$; see \cite[Section~3]{iritani2026notesdecompositiontheoremblowups}. We retain the full quantum connection over this restricted base, together with its Hodge-class subconnection.

\subsection{Rigid points and evaluation}
We use Gu\'er\'e's evaluation conventions for explicit computations; geometrically, these are fibers of the quantum connection \cite[Definition~26]{Guere}.

Recall that a \emph{rigid point} $x\in B_X$ is represented on an affinoid chart $W=\operatorname{Sp}R$ by a maximal ideal $\mathfrak m_x$ with residue field finite over $\KK$ \cite[Definition~4.1.1]{Piotr}. Since $\KK$ is algebraically closed, it determines an evaluation map
$$\ev_x:R\longrightarrow\KK,\qquad f\longmapsto f(x).$$
Define the fiberwise spectrum and generalized eigenspace by
$$\mathrm{Sp}^X_{\ev_x}  =\{\alpha\in\KK:\det(\ev_x(\kappa_\tau)-\alpha I)=0\},\qquad  E^X_{\ev_x,\alpha}  =\ker(\ev_x(\kappa_\tau)-\alpha I)^M,\quad M\gg0.$$
We also write $\mathrm{Sp}^X_x$, $E^X_{x,\alpha}$, or omit the subscript $x$ on $\ev$ when the point is understood. This agrees with the normalization $b=1$ in \cite[Definition~26]{Guere}. Only the convergent quantum coefficients and the polynomial unit term are evaluated; no evaluation of the entire formal coefficient ring is required.

As in \cite[Definition~28]{Guere}, set
$$\rho^X_{\ev_x,\alpha} =\dim_\KK\bigl(E^X_{\ev_x,\alpha}\cap H^*(X)^{\mathrm{Hdg}}_\KK\bigr),\qquad \gamma^X_{\ev_x,\alpha} =\rank\bigl((\ev_x(\kappa_\tau)-\alpha I)|_{E^X_{\ev_x,\alpha}}\bigr).$$
We use the same abbreviation $\rho^X_{x,\alpha}=\rho^X_{\ev_x,\alpha}$. All ranks and traces are ordinary.

\subsection{Connection and horizontal factors}
Write $\cH_X$ for the trivial vector bundle over $B_X\times\DD_u$ with fiber $H^*(X,\KK)$, and $\cH_X^{\mathrm{Hdg}}$ for its subbundle with fiber $H^*(X)^{\mathrm{Hdg}}_\KK$. Here $\DD_u$ is the germ at $0$ of an analytic disc with parameter $u$. On $\cH_X$ we take the meromorphic quantum connection \cite[Section~3.5.2]{KKPY}, \cite[Section~2.3]{iritani2025quantumcohomologyblowups}.
$$\nabla_{\partial_u} =\partial_u-u^{-2}\kappa_\tau+u^{-1}\Gr_X, \qquad \nabla_{\partial_{T_i}} =\partial_{T_i}+u^{-1}(\alpha_i\star_\tau),$$
where the second formula is for $i=0$ or $d_i\geq4$, and $\Gr_X|_{H^k(X,\KK)}=\tfrac{k-\dim X}{2}I$.

For $D\in N^1(X)_\KK$, let $\xi_D$ be the translation-invariant vector field on $T_X$ defined by $\xi_D(Q^\beta)=(D\cdot\beta)Q^\beta.$ The connection in the Novikov-torus directions is $ \nabla_{\xi_D} =\xi_D+u^{-1}(D\star_\tau).$ Thus the divisor directions $\alpha_1,\ldots,\alpha_{\rho(X)}$ are encoded by the Novikov coordinates. Flatness follows from the formal quantum identities on the convergence domain. Moreover, $\cH_X^{\mathrm{Hdg}}$ is preserved by the connection; see \cite[Proposition~12]{Guere} and \cite[Lemma~7]{iritani2026notesdecompositiontheoremblowups}.

We consider the $u$-adic completion of $\cH_X$ and $\cH_X^{\mathrm{Hdg}}$:
\[\cL_X =(H^*(X,\KK)\otimes_\KK\cO_{B_X})[[u]],\qquad \cL_X^{\mathrm{Hdg}} =(H^*(X)^{\mathrm{Hdg}}_\KK\otimes_\KK\cO_{B_X})[[u]].\]
Put $\cE_X=\cL_X/u\cL_X=\cH_X|_{u=0}$.

\begin{definition}
    A \emph{horizontal projector} of the connection on an affinoid open chart $W=\operatorname{Sp}R\subset B_X$ is an endomorphism
$P(u):\cL_X|_W\to \cL_X|_W$ over $\cO_W[[u]]$ satisfying
\[
 P(u)^2=P(u),\qquad
 [\nabla_{\partial_u},P(u)]=0,\qquad
 [\nabla_\xi,P(u)]=0\quad(\xi\in TW).
\]
\end{definition}

We are interested in the spectral projectors of $\kappa_\tau$ on the residual bundle $\cE_X$. The following lemma lifts a projector onto a cluster of generalized eigenspaces when its spectrum is disjoint from the complementary spectrum.

\begin{lemma}\label{lem:separation}
Let $P_0:\cE_X|_W\to\cE_X|_W$ be a spectral projector of $\kappa_\tau$
on a neighborhood $W\subset B_X$, with selected and complementary
spectra disjoint. After shrinking $W$ and the $u$-disc, it lifts to a
unique horizontal analytic projector $P(u)$ with $P(0)=P_0$.
The decomposition preserves $\cH_X^{\hdg}$.
\end{lemma}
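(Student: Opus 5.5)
We prove the lemma in two stages. First we construct the lift as a formal power series in $u$ using the $\nabla_{\partial_u}$ equation alone. Then we show that horizontality in the base directions follows from uniqueness. This is the standard block-diagonalization argument for a connection with a pole of order two, cf.\ the formal decomposition in \cite[Section~2.3]{iritani2025quantumcohomologyblowups}.

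\textbf{Recursive construction.} Write $P(u)=\sum_{k\geq0}P_ku^k$ with $P_k\in\End(H^*(X,\KK))\otimes\cO_W$. The condition $[\nabla_{\partial_u},P]=0$ reads
\[
\partial_uP-u^{-2}[\kappa_\tau,P]+u^{-1}[\Gr_X,P]=0 .
\]
Comparing coefficients of $u^{k-2}$ gives
\[
[\kappa_\tau,P_k]=[\Gr_X,P_{k-1}]+(k-1)P_{k-1}.
\]
Together with $P^2=P$, this gives the recursion. Decompose $\End$ into the block-diagonal part, which commutes with $P_0$, and the off-diagonal part relative to $P_0$ and $1-P_0$. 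Because the selected and complementary spectra of $\kappa_\tau$ are disjoint, $\operatorname{ad}\kappa_\tau$ is invertible on off-diagonal endomorphisms. Its inverse is analytic in the base: the resolvent $(\kappa_\tau-\alpha)^{-1}$ is analytic for $\alpha$ on a contour separating the two clusters, and the contour can be held fixed after shrinking $W$. So the off-diagonal part of $P_k$ is determined by the right-hand side, whose diagonal component must vanish. The diagonal part of $P_k$ is fixed by idempotency: writing $P=P_0+N$, the equation $P^2=P$ forces the diagonal part of $P_k$ to be a polynomial in lower-order terms. The solvability condition at the next step then holds automatically; we check this by conjugating. Concretely, we seek $P=G P_0G^{-1}$ with $G=1+O(u)$ block off-diagonal and solve for $G$ order by order, which is cleaner. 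The uniqueness of each $P_k$ gives uniqueness of $P$.

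\textbf{Base directions.} For $\xi\in TW$, set $P'=[\nabla_\xi,P]$. Flatness gives $[\nabla_{\partial_u},\nabla_\xi]=0$, so $P'$ commutes with $\nabla_{\partial_u}$. Differentiating $P^2=P$ gives $P'=PP'+P'P$, so $P'$ is off-diagonal with respect to $P$. Its constant term is $[\xi,P_0]+u^{-1}[D\star_\tau,P_0]$. The polar part vanishes because $D\star_\tau$ commutes with $\kappa_\tau$, since quantum multiplication is commutative, and $P_0$ is a polynomial in $\kappa_\tau$. Running the recursion for an off-diagonal horizontal endomorphism starting at order zero then forces $P'=0$, since $\operatorname{ad}\kappa_\tau$ is injective there.

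\textbf{Hodge compatibility.} The operators $\kappa_\tau$, $\Gr_X$ and $\alpha_i\star_\tau$ all preserve $H^*(X)^{\hdg}_\KK$ by \cite[Proposition~12]{Guere}. Hence so do the resolvent $P_0$ and every step of the recursion, so $P$ preserves $\cH^{\hdg}_X$. Equivalently, $P$ is the unique lift, and it is therefore fixed by the Hodge group.

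\textbf{Main obstacle.} We expect the analytic control of the recursion to be the main difficulty. The series $P(u)$ is only formal in $u$, which suffices because $\cL_X$ is $u$-adically completed. However, the coefficients $P_k$ must be analytic on a common $W$. This requires uniform invertibility of $\operatorname{ad}\kappa_\tau$ on the off-diagonal blocks over $W$, which we obtain from the fixed separating contour after shrinking $W$. Shrinking the $u$-disc is needed only if one insists on convergence, and we do not.
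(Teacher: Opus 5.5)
Your uniqueness argument is sound: off-diagonal parts of $P_k$ are fixed by $[\kappa_\tau,\cdot\,]$ and diagonal parts by $P^2=P$. So is your base-direction argument: flatness and the Jacobi identity make $[\nabla_\xi,P]$ a $\nabla_{\partial_u}$-horizontal endomorphism that is off-diagonal for $P$, and its lowest $u$-coefficient commutes with $\kappa_\tau$ while being off-diagonal for $P_0$, hence vanishes. Your Hodge argument also works, because the inverse of $\operatorname{ad}\kappa_\tau$ on off-diagonal endomorphisms preserves, fiberwise and by finite-dimensionality, the subspace of those stabilizing $H^*(X)^{\hdg}_\KK$. Together these give a self-contained substitute for the paper's appeal to \cite[Propositions~3.29 and~3.36, Lemma~3.25]{HYZZ} and to the splitting of the Hodge-class subbundle. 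One correction: over the non-Archimedean field $\KK$ there is no contour integration. The inverse Sylvester operator is analytic instead by Cramer's rule, since its determinant is nowhere zero on the affinoid $W$ and hence a unit.

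The gap is analyticity in $u$. The lemma asserts that, after shrinking the $u$-disc, the lift is an \emph{analytic} projector. The paper relies on this later: the glued $P_\Sigma(u)$ is said to converge near $u=0$, and Proposition~\ref{prop:transport} produces frames analytic and regular at $u=0$. You explicitly decline to prove convergence, so you only obtain a formal lift with analytic coefficients. This is not automatic. Over $\CC$ the recursion $[\kappa_\tau,P_k]=[\Gr_X,P_{k-1}]+(k-1)P_{k-1}$ gives factorial growth, and such splittings typically diverge (Stokes phenomenon). What saves it here is that the valuation is trivial on $\CC\supset\QQ$, so $|k-1|\le 1$ in $\KK$.

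Concretely, in your gauge version take $A=-\kappa_\tau+u\Gr_X$, $G=I+\sum_{k\ge1}G_ku^k$ with $G_k$ off-diagonal, and $B=\sum_kB_ku^k$ block diagonal, with $u^2G'+AG=GB$. The $u^k$-coefficient gives $B_k=(\Gr_XG_{k-1})_{\mathrm{diag}}$ and $[\kappa_\tau,G_k]=(k-1)G_{k-1}+(\Gr_XG_{k-1})_{\mathrm{off}}-\sum_{j=1}^{k-1}G_{k-j}B_j$. Induction with the ultrametric inequality then yields $\|G_k\|\le M^k$, where $M$ depends only on the sup norms over $W$ of the inverse Sylvester operator, of $\Gr_X$ and of $P_0$. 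Hence $G$, $G^{-1}$ (by a Neumann series) and $P=GP_0G^{-1}$ converge for $|u|<M^{-1}$. This is exactly the shrinking of the $u$-disc in the statement, and you need to add this estimate.
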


\begin{proof}
Apply \cite[Proposition~3.36]{HYZZ} to
\[
 \cE_X|_W=\im P_0\oplus\ker P_0,
 \qquad
 K=\left.\nabla_{u^2\partial_u}\right|_{u=0}
   =-\kappa_\tau.
\]
This gives an analytic splitting preserved by the vertical
connection after shrinking. Its formal completion is also
preserved by every base-direction operator by
\cite[Proposition~3.29]{HYZZ}; hence the analytic splitting
is horizontal.

For uniqueness and Hodge compatibility, write locally
$$\nabla_{\partial_u} =\partial_u+u^{-2} \begin{pmatrix}
 A_1(u)&0\\
 0&A_2(u) \end{pmatrix}.$$
A horizontal morphism $D(u)$ between summands with disjoint leading spectra satisfies
\[
 u^2D'+A_1D-DA_2=0.
\]
If $D=u^mD_m+O(u^{m+1})$ with $D_m\ne0$, then
\[
 A_1(0)D_m-D_mA_2(0)=0,
\]
contrary to \cite[Lemma~3.25]{HYZZ}. Thus such morphisms vanish.
Applied to the identity between two lifted splittings, this
proves uniqueness. Applied to the inclusion of the Hodge-class
subbundle, after splitting that subbundle by the same theorem,
it proves compatibility with $\cH_X^{\hdg}$.
\end{proof}
\begin{definition}
\begin{itemize}
    \item A \emph{separated factor} is the image of a horizontal projector whose reduction at $u=0$ is a spectral projector with disjoint spectra on its image and kernel. It is \emph{primary} at a point if its leading endomorphism has only one eigenvalue there.
    \item   The \emph{lattice} of a separated factor is the image of its projector on $\cL_X$, and its Hodge-class lattice is the image on $\cL_X^{\mathrm{Hdg}}$. 
    \item A \emph{ regular lattice isomorphism} is an isomorphism over $\cO[[u]]$ commutes with the connections. When working analytically, we require it and its inverse to converge near $u=0$.
\end{itemize}

\end{definition}

Recall that $\nabla_{u^2\partial_u}|_{u=0}=-\kappa_\tau$ and set $\mu(\xi)=\nabla_{u\xi}|_{u=0}$. Then the unit $1_X$ gives an isomorphism:
$$TB_X\xrightarrow{\sim} \cL_X^{\mathrm{Hdg}}/u\cL_X^{\mathrm{Hdg}}, \qquad \xi\longmapsto\mu(\xi)1_X.$$
    
Indeed, 
$$\mu(\partial_{T_i})1_X=\alpha_i \quad (i=0\ \text{or}\ d_i\geq4),\qquad \mu(\xi_{\alpha_i})1_X=\alpha_i \quad (1\leq i\leq\rho(X)),$$
and these are precisely the $h+1$ elements of the basis $\cH$.

Both $\kappa_\tau$ and $\mu(\xi)$ are multiplication operators by elements of the unital even algebra $H^*(X)^{\mathrm{Hdg}}_\KK$. For multiplication $m_\eta$ by an element $\eta$ of this algebra and any polynomial $f$, one has
$$f(m_\eta)|_{H^*(X,\KK)}=0 \quad\Longleftrightarrow\quad f(\eta)=0 \quad\Longleftrightarrow\quad f(m_\eta)|_{H^*(X)^{\mathrm{Hdg}}_\KK}=0,$$
by evaluation at $1_X$; compare \cite[Lemma~5.19]{KKPY}. Thus the minimal polynomials, and in particular the reduced spectra, of the full and Hodge-class actions agree. Their primary projectors are multiplication by the same idempotents.
 
We also use external direct sums, namely sums of connections pulled back to the product of their bases \cite[Definition~3.3]{KKPY}. The same argument applies to the resulting direct-product algebra and its unit. 

\begin{lemma}\label{lem:divisor}
Fix a character $\chi_*:\Gamma_X\to\KK^\times$ whose valuation class $v_X(Q_*)\in N^1(X)_\RR$ is ample, and let $x_*=(Q_*,T'_*)\in B_X$ be a rigid point with this character. All Hodge-class parameters, including the divisor parameters, give an analytic coordinate chart from a neighborhood of $(0,T'_*)$ onto a neighborhood of $x_*$. The coordinate change leaves $u$ fixed and identifies the quantum connections, and their Hodge-class subbundles.
\end{lemma}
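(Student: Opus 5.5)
The plan is to realize the identification through the divisor axiom, i.e.\ by an explicit change of variables. Work in formal coordinates where the divisor parameters $T_1,\ldots,T_{\rho(X)}$ are kept as separate variables alongside the Novikov variables. The quantum product at $\tau=\sum_i T_i\alpha_i$ depends on $T_1,\ldots,T_\rho$ only through the combinations $Q^\beta e^{\sum_{i\le\rho}T_i(\alpha_i\cdot\beta)}$, by the divisor equation. Degree-zero classes have $\alpha_i\cdot 0=0$, so the classical cup-product term is unaffected. Accordingly, define the map
\[
\Phi:(Q,T_1,\ldots,T_\rho,T')\longmapsto \bigl(Q\cdot e^{\sum_i T_i\alpha_i},\,T'\bigr),\qquad (Q e^{\sum T_i\alpha_i})^\beta:=Q^\beta e^{\sum_i T_i(\alpha_i\cdot\beta)}.
\]
Here $Q$ ranges over the fixed character $\chi_*=Q_*$ and the $T_i$ range near $0$. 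Then $\Phi(0,T'_*)$ is $x_*$.

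First, the map must be checked to be analytic and a local isomorphism. The exponential converges on a small disc $|T_i|<r$ in $\KK$; this needs $r$ below the radius of convergence of $\exp$, which is infinite in residue characteristic $0$. Moreover $v(e^{T_i(\alpha_i\cdot\beta)})=0$ for $|T_i|<1$, so the valuation class $v_X$ is unchanged and the image stays inside the ample locus of $B_X$. Since $\Gamma_X$ is a lattice of rank $\rho(X)$ dual to $N^1(X)$ (Lemma~\ref{lem:finite} justifies using numerical classes), the differential of $T\mapsto Q_*e^{\sum T_i\alpha_i}$ sends $\partial_{T_i}$ to $\xi_{\alpha_i}$. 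These vector fields form a frame of the torus tangent space, so the analytic inverse function theorem over $\KK$ gives a local isomorphism onto a neighborhood of $x_*$.

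Next comes the identification of connections. By the divisor axiom the pullback satisfies $\Phi^*(\alpha_j\star_{\Phi(\cdot)})=\alpha_j\star_\tau$ with $\tau$ including the divisor part, coefficient by coefficient, on the convergence domain. Because $\Phi_*\partial_{T_i}=\xi_{\alpha_i}$, the pulled-back $\nabla_{\xi_{\alpha_i}}$ becomes $\partial_{T_i}+u^{-1}\alpha_i\star_\tau$, while the $T'$ directions are untouched. For the Euler field, $\Eu_\tau$ has coefficient $1-d_i/2=0$ on divisor classes, so $\kappa_\tau$ pulls back correctly. The grading operator $\Gr_X$ is constant, which gives $\nabla_{\partial_u}$. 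The coordinate change is the identity on fibers $H^*(X,\KK)$ and fixes $u$, so it automatically preserves $\cH_X^{\hdg}$.

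The main obstacle is analytic rather than formal. The divisor equation is an identity in $R^*(X,\QQ)$, and it has to be transferred to the convergent functions on $B_X$. The plan is to group the formal series by numerical class via Lemma~\ref{lem:finite}. On the ample locus, each coefficient is then a convergent series $\sum_\beta c_\beta Q^\beta$. Substituting the unit-valuation factors $e^{T\cdot\beta}$ preserves absolute convergence because all terms keep the same absolute values. This is where I would rely on \cite[Lemma~2.19]{HYZZ} and \cite[Lemma~3.29]{KKPY}.
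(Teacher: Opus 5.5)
Your proposal is correct and follows essentially the paper's proof: use the divisor equation to absorb $\delta=\sum_{i\le\rho}T_i\alpha_i$ into the twisted character $\chi_\delta(\beta)=\chi_*(\beta)\exp(\delta\cdot\beta)$, invert the Jacobian via the perfect pairing between $N^1(X)$ and $\Gamma_X$ (equivalently $\partial_{T_i}\mapsto\xi_{\alpha_i}$), and use the vanishing Euler coefficient on divisor classes together with the identity bundle map. One small slip: in $\KK$ one has $|n!|=1$, so the exponential series converges exactly on the open unit disc rather than everywhere (which is why the paper later distinguishes $\exp_\CC$ from $\exp_\KK$); since you restrict to $|T_i|<1$ anyway, your argument is unaffected.
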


\begin{proof}
Put $r=\rho(X)$ and write
\[\delta=\sum_{i=1}^rT_i\alpha_i,\qquad \tau=\tau'+\delta,\qquad  T=(T_1,\ldots,T_r),\quad T'=(T_0,T_{r+1},\ldots,T_h).\]
By the divisor equation \cite[Section~1.3 and Remarks~7--8]{Guere}, we have 
$$\star_{\chi_*,\,\tau'+\delta}  =\star_{\chi_\delta,\,\tau'}.$$
Here $\chi_\delta(\beta)=\chi_*(\beta)\exp(\delta\cdot\beta)$. $\chi_\delta$ is indeed a character by additivity of $\delta\cdot\beta$. Choose an integral basis $\{\beta_j\}$ of $\Gamma_X$ and let $M_{ij}=\alpha_i\cdot\beta_j, Q_j^*=\chi_*(\beta_j).$ Then $\chi_{\delta}$ is determined by its value at the basis $\{\beta_j\}$ with coordinate function as $q^{\delta}_j:=\chi_{\delta}(\beta_j)=Q_j^*\exp(\sum_iM_{ij}T_i)$. The numerical intersection pairing makes $M$ invertible. For sufficiently small $\delta$, the exponentials converge and have norm one, so the Novikov valuation class is unchanged as 
$$v(\chi_{\delta}(\beta))=v(\chi_*(\beta))+v(\exp(\delta\cdot\beta))=v(\chi_*(\beta)).$$
Moreover, Let $\sfA$ denote the change-of-coordinate matrix $\sfA:=\sfA(T,T')=(\{q^{\delta}_j\}_j,T')$, we have 
\[\det d\sfA_{(0,T'_*)} =\left(\prod_jQ_j^*\right)\det M\ne0.\]
Thus $\sfA$ is an isomorphism of analytic neighborhoods after shrinking. For the connection, the chain rule gives
\[
 \sfA_*\partial_{T_i}=\sum_jM_{ij}q^{\delta}_j\partial_{q^{\delta}_j}=\xi_{D_i},
 \qquad
 (\sfA^*\nabla)_{\partial_{T_i}}
      =\partial_{T_i}+u^{-1}(D_i\star_{\chi_*,\tau'+\delta}).
\]
The other parameter coordinates are unchanged. Since the divisor
coefficient $1-d_i/2$ in $\Eu_\tau$ is zero,
\[
 \Eu_{\tau'+\delta}=\Eu_{\tau'},\qquad
 \kappa_{\chi_*,\tau'+\delta}=\kappa_{\chi_\delta,\tau'},\qquad
 (\sfA\times\id_{\DD_u})^*\nabla
        =\nabla.
\]
So $\sfA$ preserves Hodge-class subbundle, with no affection on $u$.

For a fixed complex divisor shift $\delta_0$, apply the same argument to $\delta-\delta_0$ also gives an isomorphism between a neighborhood of $x_*$ with a neighborhood of $(\delta_0,T'_*)$.
\end{proof}

We next show that a nonsplit rank 2 generalized eigenspace of $\kappa_\tau$ persist in a small neighborhood, this is claimed in \cite[Remark~27 and Example~29]{Guere} and \cite[Section~3.3 and Remark~3.14]{KKPY}. We provide a detailed proof in our special rank 2 case. Besides, we freely replace the base $B_X$ by an appropriate open subset in the following proposition. Let $P_0:\cE_X\to\cE_X$ be a spectral projector of $\kappa_\tau$. By Lemma \ref{lem:separation}, there exists a horizontal projector $P(U):\cL_X\to \cL_X$ lifts $P_0$ after properly shrinking $B_X$. Let $\cF=P(U)\cL_X$, and $L=P(U)\cL_X^{\hdg}$. 

\begin{proposition}\label{prop:transport}
Suppose $L$ has rank 2
and, at a rigid point $x_0\in B_X$, the endomorphism induced by the Euler vector field of $(L/uL)_{x_0}$ is given by a nonsplit rank 2 Jordan block:
\[
 \kappa_{\tau,L}(x_0):=\ev_{x_0}(\kappa_\tau)|_{(L/uL)_{x_0}}=\alpha_0I+N_0,
 \qquad N_0^2=0,\quad N_0\ne0.
\]
Let $\alpha(x)=\tfrac12\tr\kappa_{\tau,L}(x)$.
Then there is an open neighborhood $W\subset B_X$ of $x_0$, such that every rigid $x\in W$ satisfies
\[
 \kappa_{\tau,L}(x)=\alpha(x)I+N_x,
 \qquad N_x^2=0,\quad N_x\ne0.
\]
Futhermore, for $\cF_x=\cF|_{\{x\}\times\DD_u}$, consider the restriction $\nabla_{x,\partial_u}:=\nabla_{\partial_u}|_{\{x\}\times\DD_u}$, define
\[ 
\nabla^0_{x,\partial_u}   
=\nabla_{x,\partial_u}+\frac{\alpha(x)}{u^2}I.
\]

After shrinking the $u$-disc, for every rigid $x\in W$ there exist
$c_x\in\KK^\times$, independent of $u$, and an isomorphism
\[
 (\cF_x,\nabla_x^0)
 \simeq h_{c_x}^*(\cF_{x_0},\nabla_{x_0}^0),
 \qquad h_{c_x}(u)=c_xu.
\]
The isomorphism and its inverse are analytic and regular at $u=0$,
and identify the Hodge-class subbundles.
\end{proposition}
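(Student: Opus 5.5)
The plan has two parts: persistence of the Jordan block near $x_0$, then a fiberwise comparison of the connections obtained by rescaling $u$.

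\emph{Persistence.} Since $L$ is a rank 2 horizontal summand, its residue $\kappa_{\tau,L}(x)$ is a $2\times 2$ matrix whose entries are analytic on a neighborhood of $x_0$.

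1. By the preceding discussion, $\kappa_\tau$ on the Hodge-class part is multiplication by the element $\Eu_\tau\star_\tau$ in the unital algebra $P_0\bigl(H^*(X)^{\hdg}_\KK\bigr)$. This algebra has rank 2 and is generated by the unit, since $\mu(\xi)1_X$ spans $TB_X$.
2. Write $\eta=\alpha(x)\cdot\mathrm{unit}+\nu(x)$ with $\nu$ traceless. In the basis $\{e_x,\nu(x)\}$, multiplication by $\nu(x)$ has square $\det$-type scalar $-\det(\kappa_{\tau,L}(x)-\alpha(x)I)$. So $\kappa_{\tau,L}(x)$ is either scalar plus nilpotent or semisimple with two distinct eigenvalues.
3. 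The key step is to show that the discriminant $\Delta(x)=\tr^2-4\det$ vanishes identically near $x_0$. For this I will use that the separated factor is primary throughout the neighborhood where the spectral projector $P_0$ is defined. The factor was chosen as the generalized eigenspace cluster of $\alpha_0$. If the eigenvalue split nearby, the algebra would acquire two idempotents and $L$ would decompose.
4. This point is where I expect the main obstacle: why $\Delta\equiv 0$ rather than merely $\Delta(x_0)=0$. I would first try to use the flatness identity $[\nabla_\xi,\nabla_{u^2\partial_u}]$ at order $u^0$ together with the Euler homogeneity $\mathrm{Lie}_{\Eu}$. Together these force $d\Delta$ to be proportional to $\Delta$ along the parameter directions, via $\xi\kappa=[\mu(\xi),\kappa]+\mu(\xi)$-terms. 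Combined with $\Delta(x_0)=0$ and connectedness of a small disc, this would give $\Delta\equiv0$.
5. Failing that, I would instead adopt the references' claim in \cite[Remark~27]{Guere} as the mechanism: semisimplicity of quantum multiplication is an open condition whose complement is analytic, and a rank 2 local algebra deforms within local algebras along horizontal transport.
6. Finally, $N_x\neq0$ is open: $N_{x_0}\ne0$ and the entries of $N_x$ are continuous.

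\emph{Comparison of fibers.} Once $\kappa_{\tau,L}=\alpha I+N_x$ on $W$, subtracting $\alpha(x)/u^2$ makes the leading term of $\nabla^0_x$ nilpotent.

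1. The residue endomorphism in $u^2\partial_u$ is conjugate to a fixed Jordan block $N$ after scaling. Because $N_x$ and $N_{x_0}$ are both nonzero nilpotent, there is $g\in GL_2(\KK)$ and $c\in\KK^\times$ with $gN_xg^{-1}=cN_{x_0}$. The rescaling $u\mapsto c_xu$ multiplies the $u^{-2}$ term by $c_x^{-1}$, so we choose $c_x$ to absorb $c$.
2. On the Hodge-class lattice, $N_x$ equals multiplication by a nilpotent element, and we match $1_X$-components. The full factor $\cF$ is a module over this algebra, and by the minimal-polynomial remark the same normalisation applies.
3. The remaining step is to gauge away higher-order terms. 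I would use the fact that both connections are obtained from one another by horizontal transport in $B_X$: the connection is flat in the base directions, so parallel transport along a path from $x_0$ to $x$ in $W$ identifies the fibers. This transport is regular at $u=0$ after restricting to the $u$-rescaled directions generated by the Euler field, which is the source of $h_{c_x}$.
4. Concretely, decompose $\kappa_\tau$'s deformation direction as $\xi=\xi_{\Eu}+\xi'$. Transport along $\Eu$ realises $u\mapsto cu$, and $\alpha$ is absorbed by the scalar shift. Transport along directions $\xi'$ whose $\mu(\xi')$ commutes appropriately would be regular.
5. Regularity and convergence at $u=0$ follow from the analytic splitting of Lemma \ref{lem:separation} and the uniqueness argument there. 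Hodge compatibility holds because every transport operator is built from $\mu(\xi)$, which preserves $\cH^{\hdg}_X$.
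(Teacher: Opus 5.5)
Your outline has the right flavor: flatness in the base plus Euler homogeneity, with Euler segments producing the $u$-rescaling and the unit direction producing a scalar shift. However, both halves have a genuine gap at the step that carries the proof, and in neither do you use $N_0\neq0$ where it is actually needed.

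\textbf{Persistence.} Step~3 is not an argument. $P_0$ only separates the cluster of $\alpha_0$ from the complementary spectrum; nothing prevents that cluster from splitting inside $L$ at nearby points, and ``the algebra would acquire two idempotents'' is no contradiction. Your reasoning would apply verbatim when $\kappa_{\tau,L}(x_0)$ is scalar on a local algebra, and there it is false in general. For example, at points of the $I_2(m)$ Frobenius manifolds where the algebra is local but Euler multiplication is scalar, nearby points are semisimple. Step~5 does not help: openness of semisimplicity is compatible with $x_0$ having semisimple neighbours, and its second half just restates the claim.

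Step~4 is salvageable, but the identity you quote is wrong. Write $\nabla_{\partial_u}=\partial_u-u^{-2}\kappa+u^{-1}A_1+O(1)$ and $\nabla_\xi=\xi+u^{-1}\mu(\xi)+B_0+O(u)$ on $L$. The $u^{-3}$ and $u^{-2}$ terms of $[\nabla_\xi,\nabla_{\partial_u}]=0$ give $[\mu(\xi),\kappa]=0$ and $\xi(\kappa)=\mu(\xi)+[\mu(\xi),A_1]+[\kappa,B_0]$, hence $\xi(\tr\kappa^k)=k\tr(\kappa^{k-1}\mu(\xi))$. The missing input is that $L/uL$ is a rank-two algebra and $\kappa_{\tau,L}$ is nonscalar near $x_0$ (this is where $N_0\ne0$ enters), so $\mu(\xi)|_L=a_\xi I+b_\xi\kappa_{\tau,L}$. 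Then $\xi(\Delta)=2b_\xi\Delta$, and $\Delta(x_0)=0$ forces $\Delta\equiv0$. With this fix you have a valid infinitesimal alternative to the paper, which instead reads persistence off the fiber comparison ($N_x$ is a nonzero multiple of a conjugate of $N_0$).

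\textbf{Comparison of fibers.} This is where the real content lies, and your plan does not supply it. Matching leading nilpotents by $g$ and $c$ gives no isomorphism of connections, since the residue and all higher terms on the full factor $\cF$ must match. ``Parallel transport along a path'' in the base is also not regular at $u=0$: along a direction with $\mu(\xi)|_{\cF}\neq0$, the equation $\partial_tG=-B_\xi G$ has $B_\xi=u^{-1}\mu(\xi)+O(1)$, with solutions like $\exp(-t\mu(\xi)/u)$. Your criterion that $\mu(\xi')$ ``commutes appropriately'' is not the right one, since every $\mu(\xi)$ commutes with $\kappa$; what is needed is that it \emph{vanishes} on the factor.

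The missing idea is the decomposition $TW=\cO_We_P\oplus\cO_W\Eu_P\oplus\cD$, with $\mu(e_P)=I$, $\mu(\Eu_P)=\kappa_{\tau,\cF}$ and $\mu(\cD)=0$ on the selected factor; this again uses rank two and $N_0\neq0$. It makes $B_{e_P}-I/u$, $B_{\Eu_P}+uA$ (after $U=e^tu$) and $B_\xi$ for $\xi\in\cD$ regular. Consequently the gauge $\partial_tG=-CG$, $G(0,u)=I$, is regular and invertible. Flatness then gives $\widehat A(t,u)=A(x_0,u)-tI/u^2$ in the unit case and $\widehat A(t,u)=A(x_0,u)$ in the other two. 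The shift is absorbed by $\alpha/u^2$, and the Euler segments produce the constant $c_x$. Lemma~\ref{lem:separation} gives you the factor, but not the regularity of these transports.
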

\begin{proof}
For simplicity let $P=P_0$. Let $\kappa_{\tau,\cF}$ be the
restriction of $\kappa_\tau$ to $\cF/u\cF$. Define the vector fields $e_P, \Eu_P$ and the set $\cD$ by:
\[
 \begin{gathered}
 \mu(e_P)1_X=P1_X,\qquad
 \mu(\mathsf{Eu}_P)1_X=P\kappa_\tau1_X, \qquad
 \cD=\{\xi:\mu(\xi)1_X\in(1-P)H^*(X)^{\hdg}_\KK\}.
 \end{gathered}
\]
Here these equations define vector fields because
$\xi\mapsto\mu(\xi)1_X$ is an isomorphism. Thus $P1_X$ is the unit of $L/uL$, and
multiplication by $P\kappa_\tau1_X$ on this algebra is
$\kappa_{\tau,L}$. Elements of $\cD$ annihilate
$L/uL$, since $P(1-P)=0$. Hence the leading actions on
the selected full factor are
\[
 \mu(e_P)=I,\qquad
 \mu(\mathsf{Eu}_P)=\kappa_{\tau,L},\qquad
 \mu(\cD)=0.
\]
In particular, $\kappa_{\tau,L}$ would be a scalar action if $\Eu_P$ and $e_p$ are linearly dependent. This contradicts to our assumption $N_0\neq 0$. Since being linearly independent is an open condition, and the selected Hodge-class algebra $L$ has rank two, $\Eu_P$ and $e_p$ form a basis after shrinking $W$, and we have
$$TW=\cO_W e_P\oplus\cO_W\mathsf{Eu}_P\oplus\cD.$$

Choose an local frame of $\cF$, according to our definition of connection, write:
\[
 \nabla_{\partial_u}=\partial_u+A(x,u),\qquad
 \nabla_\xi=\xi+B_\xi(x,u).
\]
where:
\[
 A=A(x,u)=-\frac{\kappa_{\tau,F}(x)}{u^2}+O(u^{-1}),\qquad
 B_{e_P}=\frac Iu+O(1),\]
\[ B_{\mathsf{Eu}_P}=\frac{\kappa_{\tau,F}(x)}u+O(1),\qquad
 B_\xi=O(1)\quad(\xi\in\cD).
\]
Consequently, $B_{e_P}-I/u, B_{\mathsf{Eu}_P}+uA$, and 
 $B_\xi$ are regular along $u=0$.

To compare the local behavior at nearby fibers of $x_0$, we study
these flows case by case. Let $t$ denote the flow parameter, usually called the time parameter. 

\begin{itemize}
    \item The flow $x(t)$ along $e_P$ is defined by solving: 
    $$ \qquad x'(t)=e_P(x(t)),\qquad x(0)=x_0.$$
Let $\Phi:\DD_t\times \DD_u\to W\times \DD_u$ be the map:$(t,u)\mapsto (x(t),u)$. 
Then we have 
$$(\Phi^*\nabla)_{\partial_u}=\partial_u+A(x(t),u), \qquad (\Phi^*\nabla)_{\partial_t}=\partial_t+B_{e_P}(x(t),u).$$
We then check that there exists a frame such that the connection has the local form in the statement.  For the $t$ direction, denote the difference by 
$$C(t,u):=B_{e_P}(x(t),u)-\frac Iu$$
Then $C(t,u)$ is regular along $u=0$, and we have $(\Phi^*\nabla)_{\partial_t}=\partial_t+\frac Iu+C(t,u)$.

We claim that there exists a frame such that $C(t,u)$ can be removed, and then the local connections are preserved. In fact, consider the following equations
$$\partial_tG(t,u)=-C(t,u)G(t,u),\qquad G(0,u)=I.$$
write $C(t,u)=\sum C_i(u)t^i$ and $G(t,u)=\sum G_j(u)t^j$,
the initial-value equation gives
\[
 (j+1)G_{j+1}(u)=-\sum_{i=0}^jC_i(u)G_{j-i}(u),\qquad G_0=I.
\]
Hence there exists a regular $G(t,u)$ that can be inductively solved from the above system of equations. The convergency of $G(t,u)$ is guaranteed by the equations either. Moreover, one check directly that $G(t,u)$ is invertible by having 
$$G^{-1}(t,u)=\sum^{\infty}_{n=0}(I-G(t,u))^n.$$ 
$G^{-1}(x,u)$ is the new frame that would remove $C(t,u)$. Indeed, we have 
 \[
 \begin{aligned}
     &G^{-1}(t,u)(\partial_t+I/u+C(t,u))G(t,u)\\&=\partial_t+I/u+C(t,u)+G^{-1}(t,u)\frac{\partial G(t,u)}{\partial t}\\&=\partial_t+I/u+C(t,u)-C(t,u)\\ &=\partial_t+I/u
 \end{aligned}
 \]
 We also need to check the changes of connection along $u$ direction under the new frame, 
\[
 \begin{aligned}
     &G^{-1}(t,u)(\partial_u+A(x(t),u))G(t,u)\\&=\partial_u+G^{-1}(t,u)A(x(t),u)G(t,u)+G^{-1}(t,u)\frac{\partial G(t,u)}{\partial u}
 \end{aligned}
 \]
Let $\widehat{A}(t,u):=G^{-1}(t,u)A(x(t),u)G(t,u)+G^{-1}(t,u)\frac{\partial G(t,u)}{\partial u}$ be the new local matrix.  Since flatness of connection is preseved in a new frame, under the conjugation action of $G(t,u)$, we still have 
\[
 \begin{aligned}
 0&=[\partial_t+I/u,\partial_u+\widehat{A}(t,u)]\\
     &=\frac{\partial\widehat{A}(t,u)}{\partial t}-\frac{\partial}{\partial u}(\frac{I}{u})+[\frac{I}{u},\widehat{A}(t,u)]\\
     &=\frac{\partial\widehat{A}(t,u)}{\partial t}+\frac{I}{u^2}
 \end{aligned}
 \]
 Solve the above equation with respect to $t$ shows that $\widehat{A}(t,u)=\widehat{A}(0,u)-\frac{tI}{u^2}$. Note also $\widehat{A}(0,u)=A(x_0,u)$ by definition, hence we have 
 $$\widehat{A}(t,u)=A(x_0,u)-\frac{t}{u^2}I.$$
 Now taking the coefficients of the $u^{-2}$ term on both sides, we have 
 $$G(t,0)^{-1}\kappa_{\tau,L}(x(t))G(t,0)=\kappa_{\tau,L}(x_0)+tI$$
 Taking trace on both side and use $\alpha(x)=\tfrac12\tr\kappa_{\tau,L}(x)$, we have 
 $$\alpha(x(t))=\alpha(x_0)+t$$
 Hence 
 \[
 \begin{aligned}
 G(t,u)^{-1}\nabla^0_{x(t),\partial_u}G(t,u)
 &=G(t,u)^{-1}(\partial_u+A(x(t),u)+\frac{\alpha(x(t))}{u^2}I)G(t,u)\\
     &=\partial_u+A(x_0,u)-\frac{t}{u^2}I+\frac{\alpha(x(t))}{u^2}I\\
     &=\partial_u+A(x_0,u)+\frac{\alpha(x_0)}{u^2}I\\
     &=\nabla^0_{x_0,\partial_u}
 \end{aligned}
 \]
Thus the normalized vertical connections $\nabla^0_{x,\partial_u}$ are regularly isomorphic.

\item The flow $x(t)$ along $\Eu_P$ is defined by solving
\[
 x'(t)=\Eu_P(x(t)),\qquad x(0)=x_0.
\]
In this case we also rescale the $u$ coordinate. Let
\[
 \Phi:\DD_t\times\DD_u\longrightarrow W\times\DD_u,
 \qquad \Phi(t,u)=(x(t),U=e^tu).
\]
Take $\DD_t$ small enough that $e^t$ converges. Since $dU=e^tdu+Udt$, the pullback operators are
\[
 \begin{aligned}
 (\Phi^*\nabla)_{\partial_u}
   &=\partial_u+e^tA(x(t),e^tu),\\
 (\Phi^*\nabla)_{\partial_t}
   &=\partial_t+B_{\Eu_P}(x(t),e^tu)
                 +e^tuA(x(t),e^tu).
 \end{aligned}
\]
Let
\[
 C(t,u):=B_{\Eu_P}(x(t),e^tu)+e^tuA(x(t),e^tu).
\]
Then $C(t,u)$ is regular along $u=0$ as the leading terms cancel:
\[
 C(t,u)=\frac{\kappa_{\tau,\cF}(x(t))}{e^tu}
       -\frac{\kappa_{\tau,\cF}(x(t))}{e^tu}+O(1)=O(1).
\]
 Solve again
\[
 \partial_tG(t,u)=-C(t,u)G(t,u),\qquad G(0,u)=I.
\]
As in the unit case, $G(t,u)$ and $G(t,u)^{-1}$ are analytic,
regular at $u=0$, and preserve the Hodge-class subbundle. Then the $t$ direction becomes
\[
 \begin{aligned}
 G(t,u)^{-1}(\partial_t+C(t,u))G(t,u)
 &=\partial_t+G(t,u)^{-1}C(t,u)G(t,u)
     +G(t,u)^{-1}\frac{\partial G(t,u)}{\partial t}\\
 &=\partial_t.
 \end{aligned}
\]
For the $u$ direction, set
\[
 \widehat A(t,u)
 :=G(t,u)^{-1}e^tA(x(t),e^tu)G(t,u)
   +G(t,u)^{-1}\frac{\partial G(t,u)}{\partial u}.
\]
Flatness now gives
\[
 0=[\partial_t,\partial_u+\widehat A(t,u)]
   =\frac{\partial\widehat A(t,u)}{\partial t}.
\]
Since $G(0,u)=I$ and $e^0=1$, we have
$\widehat A(0,u)=A(x_0,u)$, and hence
\[
 \widehat A(t,u)=A(x_0,u).
\]
Comparing the coefficients of $u^{-2}$ on both sides
gives
\[
 G(t,0)^{-1}e^{-t}\kappa_{\tau,\cF}(x(t))G(t,0)
 =\kappa_{\tau,\cF}(x_0).
\]
Restricting to the Hodge-class part and taking half the trace yields
\[
 \alpha(x(t))=e^t\alpha(x_0).
\]
The scalar normalization is compatible with the pullback, because
\[
 e^t\frac{\alpha(x(t))}{(e^tu)^2}I
 =\frac{\alpha(x_0)}{u^2}I.
\]
Therefore
\[
 \begin{aligned}
 &G(t,u)^{-1}\left(
 \partial_u+e^tA(x(t),e^tu)
       +e^t\frac{\alpha(x(t))}{(e^tu)^2}I\right)G(t,u)\\
 &\qquad=\partial_u+\widehat A(t,u)
                    +\frac{\alpha(x_0)}{u^2}I
 =\nabla^0_{x_0,\partial_u}.
 \end{aligned}
\]
In other words,
\[
 h_{e^t}^*(\cF_{x(t)},\nabla^0_{x(t)})
 \simeq(\cF_{x_0},\nabla^0_{x_0}).
\]

\item For a local section $\xi\in\cD$, the flow $x(t)$ is defined
by solving
\[
 x'(t)=\xi(x(t)),\qquad x(0)=x_0.
\]
Keep $u$ fixed and let $\Phi(t,u)=(x(t),u)$. Then
\[
 (\Phi^*\nabla)_{\partial_u}=\partial_u+A(x(t),u),\qquad
 (\Phi^*\nabla)_{\partial_t}=\partial_t+B_\xi(x(t),u).
\]
Here $C(t,u):=B_\xi(x(t),u)$ is already regular along $u=0$.
Solving
\[
 \partial_tG(t,u)=-C(t,u)G(t,u),\qquad G(0,u)=I
\]
therefore makes the time operator equal to $\partial_t$. Put
\[
 \widehat A(t,u)
 :=G(t,u)^{-1}A(x(t),u)G(t,u)
   +G(t,u)^{-1}\frac{\partial G(t,u)}{\partial u}.
\]
Exactly as in the Euler case, flatness gives
\[
 0=[\partial_t,\partial_u+\widehat A(t,u)]
   =\frac{\partial\widehat A(t,u)}{\partial t},\qquad
 \widehat A(t,u)=A(x_0,u).
\]
Comparing leading coefficients and taking half the trace on the
Hodge-class part gives
\[
 G(t,0)^{-1}\kappa_{\tau,\cF}(x(t))G(t,0)
 =\kappa_{\tau,\cF}(x_0),\qquad
 \alpha(x(t))=\alpha(x_0).
\]
Consequently,
\[
 \begin{aligned}
 G(t,u)^{-1}\nabla^0_{x(t),\partial_u}G(t,u)
 &=\partial_u+\widehat A(t,u)+\frac{\alpha(x(t))}{u^2}I\\
 &=\nabla^0_{x_0,\partial_u}.
 \end{aligned}
\]
These comparisons are regular and preserve the Hodge-class
subbundle.
\end{itemize}

To summarize, along a $e_P$ flow, $\kappa_{\tau,\cF}$ is changed by
conjugacy and scalar translation; along an Euler-flow it
is changed by conjugacy and multiplication by $e^t$; along a
flow for some vector field in $\cD$, it is changed only by conjugacy. After restricting to $L$, subtracting $\alpha(x)I$ cancels the translation.
Since each nearby fiber can be reached through a combination of these three type of flows. We have shown that at every rigid $x\in W$,
\[
 N_x:=\kappa_{\tau,L}(x)-\alpha(x)I
\]
is a nonzero scalar multiple of a conjugate of $N_0$. Hence
$N_x^2=0$ and $N_x\ne0$, proving our first assertion.

For the second claim, note that all three flow calculations were made on $\cF$,
and their changes of frame preserve the Hodge-class subbundle $L$.
Unit and complementary segments require no rescaling of $u$,
whereas an Euler segment contributes the constant homothety
$u\mapsto e^{-t}u$. Composing the comparisons, with the frame
matrices pulled back by the preceding homotheties, gives
\[
 (\cF_x,\nabla_x^0)
 \simeq h_{c_x}^*(\cF_{x_0},\nabla_{x_0}^0),
 \qquad c_x\in\KK^\times.
\]
Here $c_x$ is the product of coefficients come from the $\Eu_P$ flow,
so it is independent of $u$. Finite products and constant
homotheties preserve analyticity and regularity of the frame
matrices and their inverses. They also preserve the Hodge-class
subbundles. Since $h_{c_x}$ preserves the ideal $(u)$, completion
identifies the $\cF$ and their Hodge-class
sublattices $L$. This proves the second assertion.
\end{proof}

\subsection{A logarithmic lattice}\label{sec:residue}
Let $L$ be as in Proposition~\ref{prop:transport}. Then there exists an open neighborhood $W\subset B_X$, such that $\kappa_{\tau,L}$ is nonscalar with a unique eigenvalue at every rigid point of $W$. Let $N=\kappa_{\tau,L}-\alpha I$, we know $N^2=0$ with $\rank N=1$. Let $\ell:=\ker N=\im N\subset L/uL$. Then $\ell$ is a rank 1 subbundle. 
\begin{definition}\label{def:logarithmic-lattice}
Define the \emph{elementary modification}
and the \emph{normalized vertical connection} by
$$ L^\sharp=\{v\in L:v\bmod u\in\ell\},\qquad
 \nabla^0_{\partial_u}=\nabla_{\partial_u}+\frac{\alpha}{u^2}I.$$
We call $L^\sharp$ \emph{logarithmic} for $\nabla^0$ if
\[
 \nabla^0_{u\partial_u}(L^\sharp)\subseteq L^\sharp,
 \qquad\iff\qquad
 \nabla^0_{\partial_u}(L^\sharp)\subseteq u^{-1}L^\sharp.
\]
In that case, we define the induced residue map $R^{\sharp}$ as 
\[
 R^\sharp:=(u\nabla^0_{\partial_u})\bmod u:
 L^\sharp/uL^\sharp\longrightarrow L^\sharp/uL^\sharp,
 \qquad [v]\longmapsto[u\nabla^0_{\partial_u}v].
\]
\end{definition}

\begin{lemma}\label{lem:residue-lattice}
$L^\sharp$ is a rank-two lattice, and the endomorphism
\[
 (u^2\nabla^0_{\partial_u})\bmod u:
 L^\sharp/uL^\sharp\longrightarrow L^\sharp/uL^\sharp
\]
vanishes precisely when $L^\sharp$ is logarithmic.
If $L^\sharp$ is logarithmic, the characteristic polynomial of
$R^\sharp$ are preserved by regular base changes of the normlized
vertical connections and by $h_{c_x}$ as in Proposition \ref{prop:transport}. Finally, the characteristic of $R^{\sharp}$
is locally constant also by Proposition~\ref{prop:transport}.
\end{lemma}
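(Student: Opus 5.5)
The plan is to verify the four assertions of the lemma in the order stated, working fiberwise at rigid points of $W$ and then globalizing by Proposition~\ref{prop:transport}.

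\emph{Lattice.} By Proposition~\ref{prop:transport}, $\ell=\ker N=\im N$ is a rank-one subbundle of $L/uL$ on $W$. After shrinking $W$, I will choose a local frame $(e_1,e_2)$ of $L$ such that $e_1\bmod u$ spans $\ell$. Then $L^\sharp$ is generated over $\cO_W[[u]]$ by $e_1$ and $ue_2$. Indeed, for $v=ae_1+be_2\in L$, the condition $v\bmod u\in\ell$ means $b(0)=0$, so $b=ub'$. Hence $L^\sharp$ is free of rank two, and $uL\subset L^\sharp\subset L$.

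\emph{Vanishing criterion.} Write the normalized connection in the frame $(e_1,e_2)$ as $\nabla^0_{\partial_u}=\partial_u+u^{-2}(-N+O(u))$. Here $-N$ has the form $\begin{pmatrix}0&\ast\\0&0\end{pmatrix}$, because $\im N=\ell$ is spanned by $e_1$; the sign convention is immaterial. Conjugating by $\operatorname{diag}(1,u)$ gives the matrix of $u^2\nabla^0_{\partial_u}$ in the frame $(e_1,ue_2)$. This conjugation multiplies the $(1,2)$ entry by $u$ and the $(2,1)$ entry by $u^{-1}$, and adds a term $u^2\cdot u^{-1}E_{22}$ that is harmless modulo $u$.
\begin{itemize}
\item The $(2,1)$ entry of the $O(u)$ term at order $u$ becomes the only possibly nonzero entry of $(u^2\nabla^0_{\partial_u})\bmod u$ on $L^\sharp/uL^\sharp$. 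In particular, this endomorphism is well defined, because the $(2,1)$ entry of $-N$ vanishes.
\item Logarithmicity says exactly that the matrix in the frame $(e_1,ue_2)$ has at worst a $u^{-1}$ pole, i.e. that its $u^{-2}$ coefficient vanishes. That coefficient is precisely the reduction above, which gives the equivalence.
\end{itemize}
The main care point is checking that the $u^{-3}$ term cancels. It does, because the $(2,1)$ entry of $N$ is zero.

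\emph{Invariance of the characteristic polynomial.} Let $G(u)$ be a regular isomorphism of normalized vertical connections, with $G$ and $G^{-1}$ regular at $u=0$, restricted to the Hodge-class lattices as in Proposition~\ref{prop:transport}.
\begin{itemize}
\item $G(0)$ intertwines the leading terms $N$, so it maps $\ell$ to $\ell$. Hence $G$ maps $L^\sharp$ isomorphically onto $L^\sharp$, and the modified lattices correspond.
\item $G$ intertwines $u\nabla^0_{\partial_u}$, so it conjugates $R^\sharp$ and preserves its characteristic polynomial.
\item For $h_c$ with $u\mapsto cu$, we have $h_c^*(u\partial_u)=u\partial_u$. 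The pulled-back lattice $h_c^*L^\sharp$ equals $(h_c^*L)^\sharp$, since $h_c$ preserves $(u)$ and the reduction modulo $u$. The residue is therefore unchanged, up to the identification of fibers at $u=0$.
\end{itemize}

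\emph{Local constancy.} By Proposition~\ref{prop:transport}, for each rigid $x\in W$ we have $(\cF_x,\nabla^0_x)\simeq h_{c_x}^*(\cF_{x_0},\nabla^0_{x_0})$ through a regular isomorphism that preserves the Hodge-class sublattice $L$. By the previous step, $L^\sharp_x$ is logarithmic if and only if $L^\sharp_{x_0}$ is, and when they are, the characteristic polynomials of $R^\sharp$ at $x$ and at $x_0$ agree. So the characteristic polynomial is constant on $W$. Since $x_0$ was arbitrary among points with nonsplit Jordan type, it is locally constant.

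The main subtle step is that the isomorphism of Proposition~\ref{prop:transport} is only constructed on $\cF$. I will need that it restricts to $L$ regularly together with its inverse, which the proposition asserts through its identification of the Hodge-class subbundles.
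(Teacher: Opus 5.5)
Your proposal is correct and follows the paper's proof essentially step for step. You conjugate by $\operatorname{diag}(1,u)$ to get the frame $(e_1,ue_2)$, identify the obstruction with the $(2,1)$ entry of the $u^{-1}$ coefficient, transport $\ell$ and hence $L^\sharp$ by the reduction of a regular isomorphism intertwining $N$, and conclude local constancy from Proposition~\ref{prop:transport}; the only cosmetic difference is that for $h_c$ you use $U\partial_U=u\partial_u$, whereas the paper computes explicitly that the residue matrix in the frame $(e_1,ue_2)$ is conjugated by $\operatorname{diag}(1,c)$.
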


\begin{proof}
Write $\cO=\cO_W$. Choose a local frame of $L$ adapted to $\ell$:
\[
 L=\cO[[u]]e_1\oplus\cO[[u]]e_2,\qquad
 \ell=\cO\overline e_1,\qquad
 N=\begin{pmatrix}0&n\\0&0\end{pmatrix},\quad n\in\cO^\times.
\]
The definition gives
\[
 L^\sharp=\cO[[u]]e_1\oplus\cO[[u]](ue_2),\qquad
 uL\subseteq L^\sharp\subseteq L,\qquad
 L^\sharp[u^{-1}]=L[u^{-1}].
\]
Thus $L^\sharp$ has rank two. Notice that
\[
 L^\sharp/uL=\ell,\qquad
 L^\sharp/uL^\sharp
 =\cO[e_1]\oplus\cO[ue_2];
\]

In the frame $(e_1,e_2)$, write
\[
 \nabla^0_{\partial_u}=\partial_u+A^0(x,u),\qquad
 A^0(x,u)=-\frac1{u^2}\begin{pmatrix}0&n\\0&0\end{pmatrix}
          +\frac1u(a_{ij})+(b_{ij})+O(u),
\]
where $n,a_{ij},b_{ij}$ are analytic functions of $x$. The frame
$(e_1,ue_2)$ of $L^\sharp$ is obtained by conjugation with $S=\operatorname{diag}(1,u)$, hence:
$$A^\sharp(x,u)=S^{-1}A^0(x,u)S+S^{-1}\partial_uS=\frac1{u^2}\begin{pmatrix}0&0\\a_{21}&0\end{pmatrix}
  +\frac1u\begin{pmatrix}a_{11}&-n\\b_{21}&a_{22}+1\end{pmatrix}
  +O(1).$$
So we know $L^{\sharp}$ is logarithmic if and only if $a_{21}=0.$ For $v\in L^\sharp$, $u^2\nabla^0_{\partial_u}(uv)
   =u^2v+u^3\nabla^0_{\partial_u}v\in uL^\sharp.$
Hence $u^2\nabla^0_{\partial_u}$ descends to the quotient $L^\sharp/uL^\sharp$, where its
matrix is
\[
 (u^2\nabla^0_{\partial_u})\bmod u
   =\begin{pmatrix}0&0\\a_{21}&0\end{pmatrix}.
\]
So we have
$$(u^2\nabla^0_{\partial_u})\bmod u=0\iff a_{21}=0 \iff L \text{ is logarithmic.}$$
When L is logarithmic, the residue matrix $R^{\sharp}$ is
$$R^\sharp=\begin{pmatrix}a_{11}&-n\\b_{21}&a_{22}+1\end{pmatrix}.$$

Let $\Psi:L\xrightarrow{\sim}L'$ be a regular lattice isomorphism
intertwining the centered vertical connections. Comparing their
leading terms gives
\[
 \overline\Psi N=N'\overline\Psi,
 \qquad \overline\Psi:=\Psi\bmod u.
\]
It follows that
\[
 \overline\Psi(\ell)=\ell',\qquad
 \Psi(L^\sharp)=(L')^\sharp.
\]
Thus the induced map $\Psi^\sharp:L^\sharp\xrightarrow{\sim}(L')^\sharp$
shows that $a_{21}=0$ if and only if $a^{'}_{21}=0$. If they vanish, then
\[
 \overline{\Psi^\sharp}\,R^\sharp
   =(R')^\sharp\,\overline{\Psi^\sharp},\qquad
 \det(tI-R^\sharp)=\det(tI-(R')^\sharp).
\]
Under $h_c(u)=U=cu$, with $c\in\KK^\times$, the vertical matrix becomes
\[
 cA^0(x,cu)
  =-\frac{N/c}{u^2}+\frac{(a_{ij})}{u}+c(b_{ij})+O(u).
\]
Therefore
\[
 \begin{pmatrix}a_{11}&-n/c\\cb_{21}&a_{22}+1\end{pmatrix}
 =\operatorname{diag}(1,c)\,R^\sharp\,\operatorname{diag}(1,c^{-1}).
\]
Hence the charactertic polynomial does not change either.

Finally, By proposition~\ref{prop:transport}, the same comparisons consequently give
\[
 (L_x^\sharp,\nabla_x^0)
 \simeq h_{c_x}^*(L_{x_0}^\sharp,\nabla_{x_0}^0),
\]
where the connections are understood on the localized lattices.
Thus, for every sufficiently nearby rigid $x$, $L_x^\sharp$ is logarithmic if and only if $L_{x_0}^\sharp$ is logarithmic. When these equivalent conditions hold,
\[
 \det(tI-R_x^\sharp)=\det(tI-R_{x_0}^\sharp).
\]
This proves our assertions.
\end{proof}

\section{Obstructions and blowup formula}\label{sec:nu}
We show that the number of primary factors with the full and Hodge-class ranks of the cubic
zero factor and the exact residue polynomial $\det(tI-R_x^\sharp)$ on the lattice of
Section~\ref{sec:quantum} can be tracked under birational morphism of smooth projective complex varieties. The number is denoted by $n_{\mathrm{cub}}$, rather than
Gu\'er\'e's $\nu^X_{\ev,\alpha}$, to emphasis that the full lattice is also part of our test.

\subsection{Counted components}
Assume first that $X$ is connected, and put $n=\dim_\QQ H^*(X,\QQ)$.
Consider the trace matrix $\cT(\kappa_\tau)=(\tr\kappa_\tau^{i+j})_{0\le i,j<n}$, where $\kappa_\tau^{i+j}$ denote the $i+j$-th self composition of $\kappa_\tau$, with $\kappa_{\tau}^{0}=I$. At a rigid point $x$ with distinct eigenvalues
$\alpha_1,\ldots,\alpha_k$ of multiplicities $m_a, 1\leq a\leq k$, let
$V=(\alpha_a^i)_{0\le i<n,\,1\le a\le k}$. Then
\[
 \cT(\kappa_\tau)(x)=V\operatorname{diag}(m_a)V^{\mathsf T},\qquad
 \det\bigl(\tr\kappa_\tau(x)^{i+j}\bigr)_{0\le i,j<k}
 =\Bigl(\prod_a m_a\Bigr)\prod_{a<b}(\alpha_b-\alpha_a)^2\ne0.
\]
Consequently $\rank\cT(\kappa_\tau)(x)$ is the number of distinct eigenvalues. Let
\[
 r=\max_{x\text{ rigid}}(\rank\cT(\kappa_\tau)(x)),\qquad
 U_X=\{x\in B_X:\rank\cT(\kappa_\tau)(x)=r\}.
\]
The complement of $U_X$ is a proper analytic subset, defined by the $r$-minors.
Since $B_X$ is smooth and connected, it is irreducible, and $U_X$ is
connected by \cite[Corollary~2.7]{Hansen}. By the identity principle
\cite[Lemma~2.2.3]{Conrad}: if on a nonempty admissible open subset (e.g. a suffciently small analytic open disc), we have $\rank\cT(\kappa_\tau)(x)\equiv k$, then $r=k.$ In particular, the maximum number of eigenvalues is unchanged by nonempty open restriction. $U_X$ is called the maximal locus of $B_X$.

As in \cite{KKPY}, we consider the \emph{reduced spectral cover} over the maximal locus $\widetilde U_X\to U_X$:
\[
 \widetilde B_X=
 \{(x,\alpha)\in B_X\times(\bA^1_\KK)^{\an}:
                \det(\kappa_\tau(x)-\alpha I)=0\}_{\red},\qquad
 \widetilde U_X=\widetilde B_X\times_{B_X}U_X.
\]
It is finite because the characteristic polynomial is monic. We show that  $\widetilde U_X\to U_X$ is actually finite \'etale of degree $r$.
Near a rigid point $x_0\in U_X$, factor the characteristic polynomial
analytically as
\[
 \chi(T)=\det(TI-\kappa_\tau)=\prod_{a=1}^r\chi_a(T),
 \qquad \chi_a(T)|_{x_0}=(T-\alpha_a(x_0))^{m_a}.
\]
Such a decomposition exists, after shrinking $U_X$ to an analytic open $W$, follows from the factorization form of Hensel's lemma for rigid analytic local rings. It is clear that for each $x\in W$, $\chi_a(T)(x)$ has exactly one root that can be written as $\alpha_a=\frac1{m_a}\tr(\kappa_\tau|_{E_a})$.
Since the base is reduced, the resulting
fiberwise identities give identities of analytic coefficients:
\[
 \det(TI-\kappa_\tau|_{E_a})=(T-\alpha_a)^{m_a},
 \qquad \alpha_a-\alpha_b\in\cO^\times\quad(a\ne b).
\]
Consequently the reduced cover locally has the description
\[
 \widetilde U_X\simeq\coprod_{a=1}^r\{(x,\alpha_a(x))\}.
\]
Hence $\widetilde U_X\to U_X$ is actually finite \'etale of degree $r$. See also \cite[Section~5.2.2]{KKPY}.

Every connected component $\Sigma$ of $\widetilde{U}_X$ is smooth and irreducible and
surjects onto $U_X$, since its image is nonempty, open and closed under a finite \'etale morphism. In particular, there are only
finitely many components, and $\sum_{\Sigma}\deg(\pi_\Sigma)=r$. Write $\pi_\Sigma:\Sigma\to U_X$.

By the uniqueness in Lemma~\ref{lem:separation}, the primary horizontal
projectors glue on $\Sigma$ to a formal projector $P_\Sigma(u)$. Its
coefficients are analytic, and it is convergent locally in the base and
near $u=0$. Let
\[
 \mathscr L_\Sigma=P_\Sigma(u)\pi_\Sigma^*\cL_X,
 \qquad
 L_\Sigma=P_\Sigma(u)\pi_\Sigma^*\cL_X^{\mathrm{Hdg}},
\]
\[
 r_\Sigma=\rank\mathscr L_\Sigma,
 \qquad \rho_\Sigma=\rank L_\Sigma.
\]
For a rigid $(x,\alpha)\in\Sigma$, these ranks are $r_\Sigma=\dim_\KK E^X_{x,\alpha}, \rho_\Sigma=\rho^X_{x,\alpha}$ defined in \cite{Guere}. By the above discussions, these ranks are locally constant.

Suppose that $\rho_\Sigma=2$, and write $L=L_\Sigma$. The selected leading
endomorphism has a unique eigenvalue, equal to its half-trace. Thus
\[
 \alpha=\tfrac12\tr(\kappa_{\tau,L}),\qquad
 N=\kappa_{\tau,L}-\alpha I,\qquad N^2=0.
\]
Define $\Sigma^\circ=\{x\in\Sigma:N_x\ne0\}.$ If $\Sigma^\circ$ is nonempty, it is the complement of a proper closed analytic subset
of the smooth irreducible space $\Sigma$. So it is connected by
\cite[Corollary~2.7]{Hansen}, and hence is irreducible. On $\Sigma^\circ$, fix the notations as in Section \ref{sec:quantum}:
\[
 \rank N=1,\qquad \ell=\ker N=\im N,
 \qquad L^\sharp=\{v\in L:v\bmod u\in\ell\},
 \qquad \nabla^0_{\partial_u}=\nabla_{\partial_u}+\alpha/u^2.
\]
By Lemma~\ref{lem:residue-lattice}, the obstruction
\[
 (u^2\nabla^0_{\partial_u})\bmod u
 \in\End_{\cO_{\Sigma^\circ}}(L^\sharp/uL^\sharp)
\]
is analytic and vanishes precisely when $L^\sharp$ is logarithmic. Suppose
that it vanishes at one rigid point $x_0\in\Sigma^\circ$.
Proposition~\ref{prop:transport} and Lemma~\ref{lem:residue-lattice} imply
that it vanishes at every rigid point of a neighborhood of $x_0$. On that
reduced neighborhood it therefore vanishes as an analytic section. The
identity principle now gives
\[
 (u^2\nabla^0_{\partial_u})\bmod u=0
 \quad\text{on all of }\Sigma^\circ.
\]
Hence The residue $R^\sharp=(u\nabla^0_{\partial_u})\bmod u$ is an analytic endomorphism on $\Sigma^\circ$. In particular,
\[
 \det(tI-R_x^\sharp)=\det(tI-R_{x_0}^\sharp)
 \qquad(x\in\Sigma^\circ\text{ rigid}).
\]
Finally, define 
$$P_{\mathrm{cub}}(t)=(t-1/6)(t-5/6)=t^2-t+5/36.$$

\begin{definition}\label{def:nu}
A connected component $\Sigma$ of $\widetilde U_X$ is \emph{counted} if
\[
 r_\Sigma=12,\qquad \rho_\Sigma=2,\qquad
 \Sigma^\circ\ne\varnothing,
\]
and, on $\Sigma^\circ$,
\[
 u\nabla^0_{\partial_u}(L^\sharp)\subseteq L^\sharp,
 \qquad \det(tI-R^\sharp)=P_{\mathrm{cub}}(t).
\]
The last two conditions may be checked at one rigid point
of $\Sigma^\circ$. Define
\[
 n_{\mathrm{cub}}(X)
   =\sum_{\Sigma\ \mathrm{counted}}\deg(\Sigma\to U_X).
\]
For a disconnected variety $X$, we define $n_{\mathrm{cub}}(X)$ by simply sum over its connected components. For an
external direct sum on a product of connected bases, use the same
construction for the direct-sum connection and the direct-sum Hodge-class
subbundle.
\end{definition}

\begin{proposition}\label{prop:invariance}
The count $n_{\mathrm{cub}}$ is unchanged by restriction to a nonempty
connected admissible open, by pullback to a nonempty connected finite
\'etale cover of the base, and by an isomorphism of the full connections
identifying their Hodge-class subbundles and full lattices. It is additive for
external direct sums on products of bases.
\end{proposition}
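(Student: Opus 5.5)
The plan is to check each of the four operations separately against the ingredients of Definition~\ref{def:nu}. Those ingredients are: the maximal locus $U_X$; the finite \'etale reduced spectral cover $\widetilde U_X\to U_X$ and its components $\Sigma$; the ranks $r_\Sigma,\rho_\Sigma$; nonemptiness of $\Sigma^\circ$; logarithmicity of $L^\sharp$; and the residue polynomial of $R^\sharp$. In each case I will show that counted components correspond bijectively, with matching degrees, or that their degrees add. The guiding principle is that each condition is either open--closed on $\Sigma$ or checkable at a single rigid point. This was established before the definition via Proposition~\ref{prop:transport}, Lemma~\ref{lem:residue-lattice}, and the identity principle.

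\emph{Restriction.} Let $W'\subset B$ be a nonempty connected admissible open. By the identity principle already used to define $r$, the maximal number of eigenvalues on $W'$ equals $r$, so $U_{W'}=U_X\cap W'$ is nonempty. The spectral cover restricts, $\widetilde U_{W'}=\widetilde U_X\times_{U_X}U_{W'}$. For each component $\Sigma$, its preimage $\Sigma\cap\pi^{-1}(W')$ splits into components $\Sigma'_j$ with $\sum_j\deg\Sigma'_j=\deg\Sigma$. The projectors $P_{\Sigma'_j}$ are restrictions of $P_\Sigma$, by uniqueness in Lemma~\ref{lem:separation}. Hence $r$, $\rho$, the nilpotent $N$, $L^\sharp$ and $R^\sharp$ are all restrictions. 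Moreover $\Sigma'^\circ_j=\Sigma^\circ\cap\Sigma'_j$ is nonempty iff $\Sigma^\circ\neq\varnothing$, since its complement is a proper analytic subset of the irreducible $\Sigma$ and cannot contain the open set $\Sigma'_j$. Logarithmicity and the residue polynomial are constant on $\Sigma^\circ$, so each $\Sigma'_j$ is counted iff $\Sigma$ is, and the degrees add up to $\deg\Sigma$.

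\emph{Finite \'etale pullback.} This is the same argument. If $p:B'\to B$ is connected finite \'etale, then $U_{B'}=p^{-1}U$, because the eigenvalue count is fiberwise, and the cover pulls back. The component $p^*\Sigma$ may split into pieces whose degrees over $U_{B'}$ sum to $\deg(\Sigma\to U)$. All data are pullbacks, and $p$ is open, so nonemptiness of $\Sigma^\circ$ transfers.

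\emph{Isomorphism.} Suppose $\Phi$ is an isomorphism of bases covered by a gauge isomorphism of full connections that is regular at $u=0$ and carries Hodge subbundles and full lattices to each other. Reducing mod $u$ and comparing $u^{-2}$ terms shows that $\kappa$ is conjugated fiberwise. Hence $U$, the spectral cover and its components correspond. By uniqueness in Lemma~\ref{lem:separation}, $\Phi$ carries $P_\Sigma$ to $P_{\Phi(\Sigma)}$. It therefore matches $r_\Sigma$ through the full lattice and $\rho_\Sigma$ through the Hodge lattice, matches $N$ and $\ell$, and hence $L^\sharp$. Lemma~\ref{lem:residue-lattice} then gives invariance of logarithmicity and of $\det(tI-R^\sharp)$.

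\emph{Additivity.} Consider an external sum on $B_1\times B_2$. The spectrum at $(x_1,x_2)$ is the union of the two spectra. The main obstacle is that eigenvalues from the two factors might collide, which would break the identification of the maximal locus and could merge primary factors. I will handle this first. The maximal rank is at least $r_1+r_2$ on the set where $\alpha^{(1)}_a(x_1)\ne\alpha^{(2)}_b(x_2)$ for all $a,b$. This set is nonempty: translating $T_0$ on one factor shifts its whole spectrum by a constant, since the unit direction acts by $I$ in the $u^{-2}$ term, as in the $e_P$-flow computation. So the maximal locus $U$ is this open dense complement. On $U$ each component of the cover is the pullback of a component $\Sigma_1$ (or $\Sigma_2$) along the projection to $U_{X_1}$ (or $U_{X_2}$), restricted to $U$, possibly split into pieces. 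The horizontal projectors are the pullbacks extended by zero on the other summand, again by uniqueness. So every datum is pulled back from one factor, and the restriction and pullback cases give $n_{\mathrm{cub}}=n_{\mathrm{cub}}(X_1)+n_{\mathrm{cub}}(X_2)$. Connectedness of $U$ follows from \cite[Corollary~2.7]{Hansen}, which finishes the argument.
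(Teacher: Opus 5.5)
Your proposal is correct and follows essentially the paper's route in all four cases: splitting each $\Sigma$ into pieces whose degrees sum and checking the counting conditions at one point, conjugating the leading term and applying Lemma~\ref{lem:residue-lattice}, and using the $T_0$-translation (string equation) to separate the spectra so that every primary factor of the external sum is pulled back from exactly one summand. The paper does the last step after shrinking to a small product neighborhood inside the individual maximal loci, while you do it on the whole maximal locus; either works. The one thing to add is that the isomorphism case should allow a constant rescaling $u\mapsto cu$, since Theorem~\ref{thm:local-comparison} uses $\lambda^2u$; then the spectral covers match via $(x,\alpha)\mapsto(f(x),c\alpha)$, $\alpha$ and $N$ scale by $c$ so the normalized connections still correspond, and the $h_c$-invariance in Lemma~\ref{lem:residue-lattice} finishes the argument.
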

\begin{proof}
\emph{Restriction.}
Let $V\subset B_X$ be a nonempty connected admissible open. We have
\[
 r_V=r,\qquad U_V=V\cap U_X,
 \qquad \widetilde U_V=\widetilde U_X\times_{U_X}U_V.
\]
The space $U_V$ is nonempty and connected by the same argument as above. If
\[
 \Sigma\times_{U_X}U_V=\coprod_b\Sigma_b,
\]
then each $\Sigma_b\to U_V$ is finite \'etale and surjective, and
\[
 \sum_b\deg(\Sigma_b\to U_V)=\deg(\Sigma\to U_X).
\]
Each $\Sigma_b$ is a nonempty admissible open in $\Sigma$. Thus, if
$\Sigma^\circ\ne\varnothing$, the identity principle implies
$\Sigma_b\cap\Sigma^\circ\ne\varnothing$. The ranks are unchanged, and
logarithmicity and the polynomial condition on $\Sigma_b$ hold if and
only if they hold on $\Sigma$, because it holds at one rigid point.
If $N$ is identically zero on $\Sigma$, it remains so after restriction.
Therefore all the components $\Sigma_b$ are counted precisely when
$\Sigma$ is counted. The degree identity proves restriction invariance.

\smallskip
\emph{Finite \'etale pullback.}
Let $g:B'\to B_X$ be nonempty, connected and finite \'etale. Its image
is open and closed, so it is surjective. The pulled-back leading matrix
has the same spectrum on corresponding geometric fibers. Consequently
\[
 U'=g^{-1}(U_X),\qquad
 \widetilde U'\simeq\widetilde U_X\times_{U_X}U'.
\]
Again by \cite[Corollary~2.7]{Hansen}, $U'$ is connected. For each $\Sigma$, decompose
\[
 \Sigma\times_{U_X}U'=\coprod_b\Sigma'_b.
\]
The projections from $\Sigma'_b$ to both $\Sigma$ and $U'$ are finite
\'etale with nonempty open-and-closed images, hence are surjective.
The ranks and the condition $N\ne0$ are preserved and reflected under
this pullback. The same is true of logarithmicity and of the residue
polynomial: the obstruction and residue pull back as endomorphisms, and
one may test them at corresponding rigid points.

\smallskip
\emph{Connection isomorphisms.}
Suppose an isomorphism of bases $f$ and a regular invertible matrix
$G(x,u)$ identify the connections over $U=cu$, where $c\in\KK^\times$
is constant. Comparing the leading vertical coefficients gives
\[
 G(x,0)^{-1}c^{-1}\kappa'_{\tau}(f(x))G(x,0)=\kappa_\tau(x).
\]
It therefore identifies the reduced spectral covers by
$(x,\alpha)\mapsto(f(x),c\alpha)$ and preserves the full and Hodge-class
ranks. On a rank-two Hodge-class factor,
\[
 \alpha'(f(x))=c\alpha(x),\qquad
 G(x,0)^{-1}c^{-1}N'(f(x))G(x,0)=N(x).
\]
Thus the original isomorphism also commutes with the normalized vertical
connections. It preserves the full lattices, carries $\ker N$ to
$\ker N'$, and consequently identifies $L^\sharp$ with the corresponding
pullback of $(L')^\sharp$. Lemma~\ref{lem:residue-lattice} preserves the vanishing of the
obstructions and, when they vanish, conjugates the residues. The counted
components and their covering degrees are therefore preserved.

\smallskip
\emph{External direct sums.}
Consider finitely many connected varieties $X_j$, their bases
$B_j=B_{X_j}$, and the external direct sum over $\prod_j B_j$. It suffices to work over a
nonempty product of sufficiently small connected neighborhoods inside
the individual maximal spectral loci.

Choose a rigid point in each maximal spectral locus. Each summand has an
independent unit parameter. By the string equation \cite[Section~1.3]{Guere}, translating $\tau$ by $a_j$ replaces
its leading operator by
\[
 \kappa_j\longmapsto\kappa_j+a_jI.
\]
Choose the $a_j$ so that the finitely many sets of eigenvalues are pairwise
disjoint, this can be done because at each step only finitely many scalar values are forbidden.
 After shrinking around the resulting
points, the spectral covers split into analytic sheets. The number of eigenvalues of the external
sum is now $\sum_j r_j$. This number is clearly maximum as a direct sum has at most $\sum_j r_j$ distinct
eigenvalues at any point.

On this product neighborhood, every primary projector of the external sum
is the pullback of exactly one primary projector of a summand. Its full
and Hodge-class ranks, $N$, lattice and its obstruction
are the corresponding pullbacks. The condition $N\not\equiv0$ is
preserved and reflected under a product projection. If the pulled-back
factor is logarithmic with the required polynomial at one rigid point,
so is the original factor at its projected point, and conversely. Hence 
\[
 n_{\mathrm{cub}}(\text{external direct sum})
   =\sum_j n_{\mathrm{cub}}(X_j).
\]
\end{proof}
To summarize, we have the following corollary which allows us to calulate $ n_{\mathrm{cub}}(X)$ locally:
\begin{corollary}\label{Cor:local-count}
    Let $x\in B_X$ be a rigid point. Suppose every primary factor at $x$ has
either Hodge-class rank one, or Hodge-class rank two with nonscalar
leading endomorphism. Then $x\in U_X$. Index these factors by $a=1,\ldots,k$, with full rank $r_a$, Hodge-class
rank $\rho_a$, and Hodge-class lattice $L_a$. Then
\[
 n_{\mathrm{cub}}(X)
 =\#\left\{a:
 \begin{array}{l}
 r_a=12,\ \rho_a=2,\quad L_a^\sharp\text{ is logarithmic},\\
 \det(tI-R_a^\sharp)=P_{\mathrm{cub}}(t)
 \end{array}\right\}.
\]
\end{corollary}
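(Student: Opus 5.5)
The plan is to prove the two assertions separately: first that $x\in U_X$, then that $n_{\mathrm{cub}}(X)$ can be read off at $x$ by restricting to a small connected neighborhood and applying Proposition~\ref{prop:invariance}.

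\textbf{Step 1: $x\in U_X$.} Let the primary factors at $x$ have distinct eigenvalues $\alpha_1,\dots,\alpha_k$, so that $\rank\cT(\kappa_\tau)(x)=k$. It suffices to show that $k$ distinct eigenvalues is the generic, hence maximal, count. Choose a small connected affinoid neighborhood $W$ of $x$ on which the characteristic polynomial factors by Hensel's lemma into $\chi=\prod_a\chi_a$ with pairwise coprime factors. On $W$, Lemma~\ref{lem:separation} gives horizontal lifts $P_a(u)$ of the spectral projectors. On each factor the reduced spectra of the full and Hodge-class actions agree, since both are given by multiplication by the same idempotent-cut element. So it suffices to show that the Hodge-class leading endomorphism $\kappa_{\tau,L_a}$ has exactly one eigenvalue at every rigid point of $W$.
\begin{itemize}
\item If $\rho_a=1$, this is automatic.
\item If $\rho_a=2$ and $\kappa_{\tau,L_a}(x)$ is nonscalar, Proposition~\ref{prop:transport} shows that $\kappa_{\tau,L_a}$ stays a nonsplit Jordan block $\alpha(y)I+N_y$ with $N_y^2=0$, $N_y\neq 0$, after shrinking $W$. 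So again there is a single eigenvalue.
\end{itemize}
Thus $\rank\cT(\kappa_\tau)\equiv k$ on the nonempty open $W$. The identity principle, exactly as used in defining $U_X$, then gives $r=k$, and hence $x\in U_X$.

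\textbf{Step 2: the count.} Over $W\subset U_X$, the reduced spectral cover splits as $\coprod_a\{(y,\alpha_a(y))\}$, each sheet mapping isomorphically (degree one) onto $W$. By restriction invariance in Proposition~\ref{prop:invariance}, $n_{\mathrm{cub}}(X)=n_{\mathrm{cub}}$ computed over $W$. This equals the number of counted sheets, since every sheet has degree one. On the sheet $a$ we have $r_\Sigma=r_a$ and $\rho_\Sigma=\rho_a$.
\begin{itemize}
\item If $\rho_a=1$, the sheet is not counted.
\item If $\rho_a=2$, then $\Sigma^\circ\ni x$ is nonempty by the nonscalar hypothesis.
\end{itemize}
Definition~\ref{def:nu} says that logarithmicity and the residue polynomial may be tested at one rigid point of $\Sigma^\circ$; we choose $x$ itself. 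This uses Lemma~\ref{lem:residue-lattice} together with the identity-principle argument preceding Definition~\ref{def:nu}. It yields exactly the displayed set of conditions on $a$, and completes the proof.

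\textbf{Main obstacle.} The main difficulty is Step 1, in particular showing that no further splitting of eigenvalues occurs generically near $x$. The rank-one case is trivial. The rank-two case relies on Proposition~\ref{prop:transport}, and the key point is that eigenvalue counts of the full factor are controlled by the Hodge-class algebra through the minimal-polynomial equivalence.
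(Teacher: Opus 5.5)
Your proposal is correct and matches the argument the paper intends; the paper gives no separate proof and presents the corollary as a summary of the preceding discussion. You assemble exactly those pieces: Proposition~\ref{prop:transport} together with the agreement of full and Hodge-class reduced spectra makes the eigenvalue count locally constant near $x$, so $x\in U_X$ by the identity principle; then restriction invariance in Proposition~\ref{prop:invariance} and the one-point test in Definition~\ref{def:nu}, applied at $x$ on degree-one sheets, give the displayed count.
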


\subsection{The local blowup comparison}\label{sec:comparison}
In this subsection, we provide a blow-up formula for our counting. We show that Iritani's full quantum $D$-module decomposition over the non-Archimedean field $\KK$ applies to our set up in Section~\ref{sec:quantum}. Then main ideas of the proof are the same as in \cite{iritani2025quantumcohomologyblowups,iritani2026notesdecompositiontheoremblowups} and \cite[Theorem~4.5]{KKPY}, while we provide details showing that one indeed get a rigid point in $B_X\times \DD_u$ after taking evaluations.
\begin{lemma}\label{lem:homogeneous}
Give $v$ a positive rational degree and $x_1,\ldots,x_N$ fixed rational
degrees. A homogeneous series
\[
 F(x,v)=\sum_{\boldsymbol n\in\NN^N,\,\ell\in\ZZ}
 a_{\boldsymbol n,\ell}x^{\boldsymbol n}v^\ell,
 \qquad a_{\boldsymbol n,\ell}\in\CC,
\]
converges normally on every fixed closed annulus $0<a\le |v|\le b$ for
sufficiently small $|x_i|$. Common radii work for finitely many such series
and their first derivatives, including when $u$ is among the $x_i$.
\end{lemma}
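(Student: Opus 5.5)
The plan is to use homogeneity to reduce convergence to a finite-dimensional bound on each graded piece. Write $\deg v=e>0$ and $\deg x_i=e_i$. Homogeneity of degree $D$ means $a_{\boldsymbol n,\ell}\neq0$ only when $\sum_i n_ie_i+\ell e=D$. For fixed $\boldsymbol n$, the exponent $\ell$ is therefore uniquely determined: $\ell(\boldsymbol n)=(D-\sum_i n_ie_i)/e$. So $F$ is really a series in $x$ alone,
\[
F(x,v)=\sum_{\boldsymbol n}a_{\boldsymbol n}\,x^{\boldsymbol n}v^{\ell(\boldsymbol n)}.
\]
The only issue is that $\ell(\boldsymbol n)$ may be large, positive or negative, and linear in $|\boldsymbol n|$. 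On the annulus $a\le|v|\le b$ we have
\[
|v^{\ell}|\le \max(a^{\ell},b^{\ell})\le C_0\,\lambda^{|\boldsymbol n|},
\]
where $\lambda=\max(a,b,a^{-1},b^{-1})^{\max_i|e_i|/e}$ and $C_0$ absorbs the constant $D/e$. This bound is uniform in $v$ on the closed annulus.

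The key input I would take from the setup is that the underlying series in $x$ has a positive radius of convergence, i.e. a Cauchy-type estimate $|a_{\boldsymbol n}|\le C R^{-|\boldsymbol n|}$. This is what "for sufficiently small $|x_i|$" presupposes: the Gromov--Witten coefficients converge near the origin in the sense of Lemma~2.19 of \cite{HYZZ} and Lemma~3.29 of \cite{KKPY}. Granting it, each term on the annulus is bounded by
\[
C C_0\,(\lambda/R)^{|\boldsymbol n|}\prod_i|x_i|^{n_i}.
\]
If every $|x_i|<R/(2\lambda)$, the series is dominated by a convergent geometric series, uniformly on the annulus, which gives normal convergence. Over $\KK$ the same estimate works with the non-Archimedean absolute value, and there it suffices that the terms tend to $0$.

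For finitely many series, take the minimum of the finitely many radii $R/(2\lambda)$. For first derivatives, $\partial_{x_i}$ and $\partial_v$ preserve homogeneity and shift the degree by a constant. They only introduce polynomial factors $n_i$ or $\ell(\boldsymbol n)$, which are $O(|\boldsymbol n|)$ and are absorbed by shrinking the radius by a factor of $2$. When $u$ is one of the $x_i$, nothing changes, since $u$ is just another variable of fixed rational degree.

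The main obstacle is the input estimate on $|a_{\boldsymbol n}|$, since homogeneity alone does not give convergence. I would obtain it by noting that at a fixed value, say $v=1$, $F$ restricts to a series that converges near the origin by the cited analyticity. Homogeneity then transports convergence from $|v|=1$ to the whole annulus, at the cost of the factor $\lambda^{|\boldsymbol n|}$ computed above. The point of the lemma is exactly this: rescaling $v$ is equivalent to weighted rescaling of the $x_i$.
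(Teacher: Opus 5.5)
Your bound on $|v^{\ell}|$ over the annulus, which uses homogeneity to make $\ell$ an affine function of $\boldsymbol n$, is the same as the paper's first step. The gap is the coefficient estimate $|a_{\boldsymbol n}|\le CR^{-|\boldsymbol n|}$, which you treat as outside input and propose to import from the convergence of Gromov--Witten potentials in \cite{HYZZ} and \cite{KKPY}. That does not prove the statement as posed. The lemma concerns an arbitrary homogeneous series with complex coefficients. In the paper it is applied, in Theorem~\ref{thm:local-comparison}, to Iritani's parameter map $\sfF_{Q,q}$ and the matrices $\widetilde\Psi^{\pm1}$. These are only formal elements of $\CC[u]((q^{-1/\mathfrak s}))[[Q,\widetilde T]]$. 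The cited lemmas say nothing about them, and this lemma is exactly what makes them analytic, so your route would be circular there. Moreover, for the complex absolute value the statement is false. Take $\deg x_1=\deg v$; then $\sum_n n!\,x_1^n v^{-n}$ is homogeneous of degree $0$ and diverges for every $x_1\neq 0$. Your remark that homogeneity alone does not give convergence is therefore correct only in the Archimedean reading, which is not the intended one.

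The missing idea is that all absolute values here are those of $\KK$, whose valuation is trivial on $\CC$. So every nonzero coefficient satisfies $|a_{\boldsymbol n,\ell}|=1$. This is your Cauchy estimate with $C=R=1$, and it comes for free. Combined with your bound $|v^{\ell}|\le C_0\lambda^{|\boldsymbol n|}$, each term is at most $C_0(\varepsilon\lambda)^{|\boldsymbol n|}$ when all $|x_i|\le\varepsilon$. Only finitely many $\boldsymbol n$ have a given total degree, so choosing $\varepsilon\lambda<1$ gives normal convergence. For first derivatives, the integer factors $n_i$ and $\ell$ also have norm at most $1$. The only extra factor is $|v|^{-1}\le a^{-1}$ from $\partial_v$, so no further shrinking is needed. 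This is the paper's argument.
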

\begin{proof}
Homogeneity determines at most one $\ell$ for each $\boldsymbol n$ and gives
$|\ell|\le C_0+C_1|\boldsymbol n|$, where $|\boldsymbol n|=\sum_i n_i$.
For $\Lambda=\max(1,b,a^{-1})$ and $|x_i|\le\varepsilon$,
\[
 |a_{\boldsymbol n,\ell}x^{\boldsymbol n}v^\ell|
 \le\Lambda^{C_0}(\varepsilon\Lambda^{C_1})^{|\boldsymbol n|}.
\]
Choose $\varepsilon\Lambda^{C_1}<1$. Since the norm on $\CC$ is trivial and
there are finitely many multi-indices of bounded total degree, this gives
normal convergence. Derivatives satisfy the same support bound, with only
fixed radius factors. This is the homogeneous estimate of
\cite[Lemma~4.6]{KKPY}.
\end{proof}
Now we provide the key technical lemma which allows us to rescale the evaluated target parameters into
the chosen analytic bases.
\begin{lemma}\label{lem:homothety}
For $\lambda\in\KK^\times$, define
$$ S_\lambda(Q^\beta)=\lambda^{2c_1(X)\cdot\beta}Q^\beta,
 \qquad S_\lambda(T_i)=\lambda^{2-d_i}T_i,\qquad
 D_\lambda|_{H^d(X,\KK)}=\lambda^dI.$$
On any domain where both sides converge, the map
$F_\lambda(x,u)=(S_\lambda x,\lambda^2u)$ satisfies
\[
 F_\lambda^*\nabla=D_\lambda^{-1}\nabla D_\lambda.
\]
Thus $D_\lambda$ identifies the pulled-back connection with the original
one. It preserves the full and Hodge-class lattices and induces a
comparison of their normalized vertical connections. 

Furthermore, on rank-two primary
factors with $N\ne0$, it preserves logarithmicity of $L^\sharp$ and its
exact residue characteristic polynomial.
\end{lemma}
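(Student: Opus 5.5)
The proof splits into two parts: a homogeneity computation giving the conjugation identity, and then an application of the invariance results already established for the rank-two factors.

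\emph{Step 1: the conjugation identity.} We use that the quantum product and $\kappa_\tau$ are homogeneous of the appropriate degrees. Each structure constant has the form
\[
\langle\phi_i,\phi_j,\phi_k,\tau,\ldots,\tau\rangle Q^\beta,
\]
and it is nonzero only when the total cohomological degree matches the virtual dimension. Since $Q^\beta$ has degree $2c_1(X)\cdot\beta$ and $T_i$ has degree $2-d_i$, this gives the following.
\begin{itemize}
\item If $\phi_i\star\phi_j$ has a component in $H^e$, its coefficient is a homogeneous series of degree $e-\deg\phi_i-\deg\phi_j$. Under $S_\lambda$ that coefficient is multiplied by $\lambda^{e-\deg\phi_i-\deg\phi_j}$.
\item Equivalently, $S_\lambda^*(\alpha\star_\tau)=\lambda^{-\deg\alpha}D_\lambda(\alpha\star_\tau)D_\lambda^{-1}$, up to the sign convention for $D_\lambda$ versus $D_\lambda^{-1}$, which I would fix by checking one case.
\item $\Eu_\tau$ has degree $2$, so $S_\lambda^*\kappa_\tau=\lambda^{2}D_\lambda\kappa_\tau D_\lambda^{-1}$.
\end{itemize}

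We then pull back each operator along $F_\lambda$.
\begin{itemize}
\item The $u$-direction: $F_\lambda^*(\partial_u-u^{-2}\kappa+u^{-1}\Gr)=\partial_u-\lambda^2(\lambda^2u)^{-2}S_\lambda^*\kappa+\ldots$. The powers of $\lambda$ cancel against the conjugation, and $\Gr$ commutes with $D_\lambda$.
\item The $T_i$-directions: $\partial_{T_i}$ pulls back with a factor $\lambda^{2-d_i}$, and this matches $\lambda^{-2}\lambda^{d_i}$ coming from $u^{-1}$ and the degree of $\alpha_i$.
\item The Novikov directions: $\xi_D$ is invariant under $S_\lambda$, and the degree-$2$ divisor product compensates the factor $\lambda^2$ in $u$.
\end{itemize}
Convergence is only asserted where both sides converge, so the identity of formal series holds as analytic functions there.

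\emph{Step 2: lattices.} $D_\lambda$ is a constant, degree-preserving, invertible operator with $\lambda\in\KK^\times$. It therefore preserves $H^*(X)^{\hdg}_\KK$ and the $u$-adic lattices $\cL_X$ and $\cL_X^{\hdg}$, and it is a regular lattice isomorphism in the sense of the definition.

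\emph{Step 3: primary factors and residues.} By Step 1 the reduced spectral covers are identified via $(x,\alpha)\mapsto(S_\lambda x,\lambda^2\alpha)$. Lemma~\ref{lem:separation} and its uniqueness statement then show that $D_\lambda$ carries the horizontal projectors to the pulled-back projectors, so it preserves the full and Hodge-class ranks. For the normalized connection, $\alpha\circ S_\lambda=\lambda^2\alpha$, so the scalar term $\alpha/u^2$ transforms compatibly under $u\mapsto\lambda^2u$. This is exactly the case treated in the connection-isomorphism part of Proposition~\ref{prop:invariance}, with $c=\lambda^2$. Lemma~\ref{lem:residue-lattice} (invariance under regular base change and under $h_c$) then gives that $D_\lambda$ maps $\ell=\ker N$ to $\ker N'$, identifies the modified lattices $L^\sharp$, preserves the vanishing of the obstruction, and conjugates the residues $R^\sharp$. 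Hence logarithmicity and the characteristic polynomial of $R^\sharp$ are preserved.

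The main obstacle is the sign and exponent bookkeeping in Step 1, in particular whether $D_\lambda$ or $D_\lambda^{-1}$ appears on the left. I would settle this by testing the $u$-component on a class of degree $d$ together with its product by the unit.
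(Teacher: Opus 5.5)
Your proposal follows the paper's proof step for step. Homogeneity of the structure constants gives a conjugation identity for $\kappa_\tau$ and $\alpha_i\star_\tau$. The powers of $\lambda$ then cancel in the $u$-, $T_i$- and Novikov directions, and the constant operator $D_\lambda$ preserves both lattices. Finally $\alpha\circ S_\lambda=\lambda^2\alpha$ makes the normalization compatible, so Lemma~\ref{lem:residue-lattice} (equivalently the connection-isomorphism part of Proposition~\ref{prop:invariance} with $c=\lambda^2$) gives the statement on $L^\sharp$ and $R^\sharp$.

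The bookkeeping you left open does need fixing, because your degree count has the wrong sign. The coefficient of $\phi_k$ in $\phi_i\star_\tau\phi_j$ has degree $\deg\phi_i+\deg\phi_j-\deg\phi_k$; for instance, $h\star h=Q\cdot 1$ on $\PP^1$ with $\deg Q=4$. The correct identities are therefore
\[
\kappa_\tau(S_\lambda x)=\lambda^2D_\lambda^{-1}\kappa_\tau(x)D_\lambda,\qquad
(\alpha\star)_{S_\lambda x}=\lambda^{\deg\alpha}D_\lambda^{-1}(\alpha\star)_xD_\lambda .
\]
These are the ones your later cancellation checks implicitly use, and they yield $D_\lambda^{-1}\nabla D_\lambda$. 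Your displayed factor $\lambda^{-\deg\alpha}$ is inconsistent with both your own $\kappa_\tau$ formula and the $\lambda^{d_i}$ you use in the $T_i$-direction check.

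A test involving only the unit, or only the $T_0I$ term of $\kappa_\tau$, cannot settle the $D_\lambda$ versus $D_\lambda^{-1}$ question, since both commute with $D_\lambda$. Check instead the classical term $c_1(X)\cup(-)$, whose coefficients are constant.
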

\begin{proof}
Writing $e_j=\deg\phi_j$, the coefficient of $\phi_k$ in
$\kappa_\tau\phi_j$ has degree $2+e_j-e_k$. Similarly, the coefficient
in $(\alpha_i\star_\tau)\phi_j$ has degree $d_i+e_j-e_k$. Hence
\[
 \kappa_\tau(S_\lambda x)=\lambda^2D_\lambda^{-1}\kappa_\tau(x)D_\lambda,
 \qquad
 (\alpha_i\star_{S_\lambda x})=
 \lambda^{d_i}D_\lambda^{-1}(\alpha_i\star_x)D_\lambda.
\]
Pullback by $U=\lambda^2u$ gives
\[
 (F_\lambda^*\nabla)_{\partial_u}
 =\partial_u-\frac{\kappa_\tau(S_\lambda x)}{\lambda^2u^2}
       +\frac{\Gr_X}{u}
 =D_\lambda^{-1}\nabla_{\partial_u}D_\lambda,
\]
since $D_\lambda$ commutes with $\Gr_X$. In each $T_i$-direction,
$\lambda^{2-d_i}\lambda^{d_i}\lambda^{-2}=1$, the divisor logarithmic
directions satisfy the same identity. See also
\cite[Section~2.3]{iritani2025quantumcohomologyblowups}.

The constant frame map preserves Hodge classes and the lattices. On
corresponding rank-two factors,
\[
 \alpha(S_\lambda x)=\lambda^2\alpha(x),\qquad
 N(S_\lambda x)=\lambda^2D_\lambda^{-1}N(x)D_\lambda,\qquad
 \lambda^2\frac{\alpha(S_\lambda x)}{(\lambda^2u)^2}
 =\frac{\alpha(x)}{u^2}.
\]
Thus normalization is compatible, and the comparison carries $\ker N$
and $L^\sharp$ to their counterparts. 
Lemma~\ref{lem:residue-lattice} then proves our claim. 
\end{proof}

\begin{theorem}\label{thm:local-comparison}
Let $X$ be a connected smooth projective complex variety, let
$X'\subset X$ be a smooth connected center of codimension $r\ge2$, and let 
$$\pr:\widetilde X=\Bl_{X'}X\to X$$ 
be the blowup of $X$ along $X'$. After passing to
finite \'etale covers of the Novikov tori $T_X$ if necessary, there are nonempty connected admissible open neighborhoods $V\subset B_{\widetilde X}$ and
$V'\subset B_X\times B_{X'}^{r-1}$ and an isomorphism $f:V\xrightarrow{\sim}V'$.
For a constant $\lambda\in\KK^\times$, the map
\[
 \Phi(x,u)=(f(x),\lambda^2u)
\]
lifts to an isomorphism of the full quantum connections identifying their
Hodge-class subbundles. The isomorphism and its inverse are analytic and
regular at $u=0$. In particular, their $u$-adic completions identify the full and Hodge-class lattices.
\end{theorem}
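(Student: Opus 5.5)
The plan is to start from Iritani's formal decomposition theorem for blowups \cite{iritani2025quantumcohomologyblowups}, in the Hodge-compatible form of \cite[Section~3]{iritani2026notesdecompositiontheoremblowups}, and upgrade it to an analytic statement over our non-Archimedean base.

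\emph{Formal input.} Iritani's theorem gives a formal invertible map of parameter spaces. Its source is the parameter space of $\widetilde X$, with a suitable extension of the Novikov variables; its target is $H^*(X)\oplus H^*(X')^{\oplus(r-1)}$. Along it there is a gauge transformation $\Theta(u)$, regular in $u$, identifying the quantum $D$-module of $\widetilde X$ with the external direct sum of that of $X$ and $r-1$ copies of that of $X'$. The Novikov variable $\mathfrak q$ of the exceptional curve class enters only through a fractional power $\mathfrak q^{1/(r-1)}$, and this is why finite \'etale covers of $T_X$ are needed: adjoining an $(r-1)$st root of the Novikov coordinate of the exceptional class produces a finite \'etale cover of the torus. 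We cover the base of $X$ as well so that the target matches. Hodge compatibility in \cite{iritani2026notesdecompositiontheoremblowups} ensures that the map restricts to Hodge-class parameters on both sides and that $\Theta$ sends $\cH^{\hdg}_{\widetilde X}$ to the direct sum of the Hodge-class subbundles. We use that the Hodge group acts compatibly and the decomposition is equivariant.

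\emph{Convergence.} All coefficients of the parameter map and of $\Theta$ are homogeneous series for the grading in which $\deg Q^\beta=2c_1\cdot\beta$, $\deg T_i=2-d_i$ and $\deg u=2$. The only possibly negative-power variable is $\mathfrak q^{1/(r-1)}$, or the variable playing the role of $v$. We apply Lemma~\ref{lem:homogeneous} with $v$ the positively graded exceptional variable and $x$ the remaining coordinates, including $u$. This gives common radii on a closed annulus in $v$ and small polydiscs in the other variables, on which the parameter map, its Jacobian, $\Theta$ and $\Theta^{-1}$ all converge normally. After that, these annular domains have to be placed inside $B_{\widetilde X}$ and $B_X\times B_{X'}^{r-1}$. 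Here we use the rescaling of Lemma~\ref{lem:homothety}: choosing $\lambda$ with suitable valuation moves the Novikov valuations of the small-radius domain into the ample-cone region, since the blowup ample cone contains classes $\pr^*H-\epsilon E$. On the $X'$ side, the eigenvalue shifts put the $r-1$ copies at distinct translates, again by homogeneity. This produces the constant $\lambda$ and the homothety $u\mapsto\lambda^2u$ in $\Phi$, and Lemma~\ref{lem:homothety} transports the connection isomorphism.

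\emph{Openness and the lattice statement.} The map $f$ is an analytic isomorphism near a rigid point because its formal Jacobian at the origin is invertible; Iritani's map is a formal isomorphism with linear part a graded isomorphism of Hodge-class cohomology. The inverse function theorem for rigid analytic spaces then gives connected admissible neighborhoods $V,V'$, which we shrink to lie in the convergence domains above. Since $\Theta$ and $\Theta^{-1}$ converge and are holomorphic at $u=0$, their $u$-adic completions identify $\cL_{\widetilde X}$ with the direct sum of full lattices, and likewise for the Hodge-class lattices.

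I expect the main obstacle to be the convergence-domain step. It requires showing that a single annular domain, after the finite \'etale cover and a single homothety $\lambda$, lands simultaneously in the ample-valuation regions of both bases. It also requires that the unit, divisor and higher Hodge parameters keep the $\DD_{<1}$ and $\bA^1$ shapes used to define $B_X$. I would first try to treat this by choosing $\lambda$ with $v(\lambda)$ large and checking valuation inequalities cone by cone.
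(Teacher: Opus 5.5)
Your overall route is the paper's: Iritani's decomposition with Hodge equivariance, Lemma~\ref{lem:homogeneous} on an annulus in $q'=q^{1/\mathfrak s}$, evaluation at a point, Lemma~\ref{lem:homothety}, and the inverse function theorem. However, the step you leave open is where the proof actually lives, and the mechanism you propose for it fails.

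A homothety shifts every Novikov valuation class by a multiple of $c_1$, so in general it cannot push a class into the ample cone. In the paper, ampleness upstairs comes from the evaluation point alone, with no rescaling of the source. After the finite \'etale cover one writes $Q^\beta=\prod_iQ_i^{H_i\cdot\beta}$ for an ample rational basis $H_i$. That cover is what lets Lemma~\ref{lem:homogeneous} treat the Novikov variables as finitely many small power-series variables; the root $q'_*=s^{1/\mathfrak s}$ is simply chosen in $\KK$. Taking $q_*=s$ and $Q_{i,*}=s^R$ then gives $v_{\widetilde X}(\widetilde Q_*)=R\pr^*\omega-[E]$, which is ample for $R\gg0$.

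The homothety is needed on the target side, and in the opposite direction from ``$v(\lambda)$ large''. Since $\widetilde Q^{\ell}=q$ for an exceptional line $\ell$, relative ampleness of $-E$ forces $|q_*|<1$. The initial degree-$d$ target coordinates ($d\ge4$) are complex multiples of $q^{(2-d)/(2(r-1))}$, so they then have negative valuation and lie outside $\DD_{<1}$. Shrinking them requires $v(\lambda)<0$; the paper takes $\lambda=s^{-1}$, giving valuation $(d-2)\bigl(1-\frac1{2(r-1)}\bigr)>0$. This worsens the target Novikov classes to $R\omega-2c_1(X)$ and $Ri^*\omega-c_1(N_i)/(r-1)-2c_1(X')$, which is absorbed by taking $R\gg0$.

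You also do not handle the initial divisor values $h_j=\frac{2\pi i(j-1/2)}{r-1}c_1(N_i)$ on the $X'$ summands. These have norm one, so they cannot be fed to $\exp_{\KK}$. They must be absorbed into the target characters as $\exp_{\CC}(h_j\cdot\beta)\exp_{\KK}((\delta_j-h_j)\cdot\beta)$, which leaves valuations unchanged. The eigenvalue translates you mention sit in the unit coordinates, which live in $\bA^1$ and impose no condition.

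Two further steps are missing.

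\emph{Analyticity of the intertwining identity.} Convergence of the parameter map, $\Theta$ and $\Theta^{-1}$ does not by itself make the identity analytic. The identity also contains the target quantum products evaluated along the parameter map, at unscaled parameters that are large. Lemma~\ref{lem:homothety} applies only where both sides already converge. The paper fills this with the dimension equation: it bounds the total exponent of the higher-degree coordinates linearly in $\omega\cdot\beta$. Each term is then at most a constant times $(M^B|s|^{R-C})^{\omega\cdot\beta}$, where $M$ bounds the unscaled coordinates, and one takes $R$ large. This is done uniformly before specialization, so the formal identities become analytic ones.

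\emph{Invertibility of the Jacobian at the actual point.} Invertibility of the formal Jacobian ``at the origin'' does not give the inverse function theorem at your evaluation point. $Q=0$ is not in $B_{\widetilde X}$, and invertibility over $\CC((q^{-1/\mathfrak s}))$ does not specialize automatically. The paper uses homogeneity to get $\det J_0(q)=c\,(q')^m$ with $c\in\CC^\times$, so $J_0(q_*)$ is invertible. Then $J_R=d_{\widetilde T}\sfF_R(0)\to J_0(q_*)$ as $R\to\infty$ gives invertibility where it is needed. The source and target charts absorbing the divisor parameters are local isomorphisms by the argument of Lemma~\ref{lem:divisor}.

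Finally, a single $R$ must be chosen that satisfies all of these conditions at once.
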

\begin{proof}
Let $E$ be the exceptional divisor, $i:X'\hookrightarrow X$ the inclusion,
and $N_i$ its normal bundle. As in the blowup notation of
\cite[Section~2.4]{Guere}, put
\[
 \mathfrak{s}=
 \begin{cases}
 r-1,&r\text{ even},\\
 2(r-1),&r\text{ odd},
 \end{cases}
 \qquad q'=q^{1/\mathfrak{s}},\qquad \deg q=2(r-1).
\]
Then $\deg q'\in\{1,2\}$. 

Choose graded cohomological bases and Hodge-class bases compatible with
the blowup decomposition. By
\cite[Theorem~5.18(1), (3), (5)--(7)]{iritani2025quantumcohomologyblowups},
there are a parameter map and a bundle isomorphism
\[
 \begin{gathered}
 \sfF_{Q,q}(\widetilde T)
   =(\tau_0(Q,q,\widetilde T),\ldots,\tau_{r-1}(Q,q,\widetilde T)),\\
 \widetilde\Psi(u):
 \tau_0^*\operatorname{QDM}(X)^{\mathrm{la}}
 \oplus\bigoplus_{j=1}^{r-1}\tau_j^*\operatorname{QDM}(X')^{\mathrm{la}}
 \xrightarrow{\sim}\operatorname{QDM}(\widetilde X)^{\mathrm{la}}.
 \end{gathered}
\]
Here $\widetilde T$ are the scalar coordinates of $\widetilde\tau$, 
$\tau_0$ is $H^*(X)$-valued and $\tau_j$ is $H^*(X')$-valued for $j\ge1$.
Our $\widetilde\Psi$ is the inverse of Iritani's map. Both bundle matrices
are defined over the graded ring $ \CC[u]((q^{-1/\mathfrak s}))[[Q,\widetilde T]],$ so they have no negative powers of $u$. The map commutes with the full connection
operators displayed in (5.41)--(5.43) of \cite{iritani2025quantumcohomologyblowups}. Their vertical
grading operators are exactly $ \Gr_Y|_{H^d(Y)}=(d-\dim Y)/2, Y=\widetilde X,X,X'$ as in Section~\ref{sec:quantum}.

Each $\tau_j$ has total degree 2, so a scalar
parameter coordinate corresponding to degree $d$ has degree $2-d$.
The matrix entries of $\widetilde\Psi$ and its inverse are homogeneous
when the fixed grading shifts on the center summands are included.
These shifts then do not introduce negative powers
of $u$ into either matrix. The parameter Jacobian at $Q=\widetilde T=0$
is invertible. By \cite[Proposition~8, Remark~9 and Corollary~11]
{iritani2026notesdecompositiontheoremblowups}, these maps are Hodge-equivariant.

Choose integral ample classes $H_1,\ldots,H_{\rho(X)}$ forming a rational
basis of $N^1(X)$, and put $\omega=\sum_iH_i$. Up to a finite \'etale cover of the Novikov tori, we write
\[
 Q^\beta=\prod_i Q_i^{H_i\cdot\beta},
 \qquad c_1(X)=\sum_i c_iH_i,\qquad \deg Q_i=2c_i.
\]
For effective $\beta$, all exponents $H_i\cdot\beta$ are nonnegative.
To compare data in $\widetilde X$ and $X'$ with data $X$, we set the common Novikov substitutions:
$$ \widetilde Q^{\widetilde\beta}
   =Q^{\pr_*\widetilde\beta}q^{-[E]\cdot\widetilde\beta},
 \qquad
 (Q')^{\beta'}
   =Q^{i_*\beta'}q^{-c_1(N_i)\cdot\beta'/(r-1)}.$$
These are the substitutions in
\cite[(1.1), (5.38)--(5.40)]{iritani2025quantumcohomologyblowups}.
Degrees are preseved under this substitutions as we have
\[
 c_1(\widetilde X)=\pr^*c_1(X)-(r-1)[E],
 \qquad c_1(X')=i^*c_1(X)-c_1(N_i).
\]
 Fix a root $q'_*=s^{1/\mathfrak s}$ and a closed annulus
$0<a\le|q'|\le b$ containing it, with $a,b\in|\KK^\times|$. By
\cite[Remark~1.3 and Section~2.2]{iritani2025quantumcohomologyblowups}, the
completions above are graded. Apply Lemma~\ref{lem:homogeneous} with
$v=q'$ and variables $Q_i,\widetilde T_i,u$ to the entries of
\[
 \sfF_{Q,q}(\widetilde T),\qquad \widetilde\Psi(Q,q,\widetilde T,u),
 \qquad \widetilde\Psi(Q,q,\widetilde T,u)^{-1},
\]
using $v=q'$ and $x_i=Q_i,\widetilde T_i,u$, omitting $u$ for the
parameter map. These entries and their first
derivatives converge normally on common sufficiently small closed discs.
The formal inverse identities hold in the resulting Banach algebra, so
both bundle maps are analytic and regular at $u=0$.

Now we evaluate the above formal constructions to a rigid point. Fix $q'_*=s^{1/\mathfrak{s}}$ and choose the range of $q'$ above to
contain it. Let $q_*=s, Q_{i,*}=s^R, \lambda=s^{-1}.$
The annulus, the parameter discs and $\lambda$ are fixed independently of
$R$. We choose $R\gg0$ after imposing the finitely many conditions below.
These values define
\[
 Q_*^\beta=s^{R\omega\cdot\beta},\qquad
 \widetilde Q_*^{\widetilde\beta}
 =s^{(R\pr^*\omega-E)\cdot\widetilde\beta},\qquad
 v_{\widetilde X}(\widetilde Q_*)=R\pr^*\omega-[E].
\]
Since $-E$ is relatively ample, this class is ample for all sufficiently
large $R$. As a result, together with $\widetilde T=0$, it gives a point of $B_{\widetilde X}$.
Since all Hodge-class parameters $\widetilde T$, including the divisor
parameters, will vary in a small neighborhood, we indeed get an open neighborhood inside $B_{\widetilde X}$. Write $\sfF_R(\widetilde\tau)=\sfF_{Q_*,q_*}(\widetilde \tau)$ for the specialized map.

We next place all target summands in $B_X\times B_{X'}^{r-1}$.
At the auxiliary locus $Q=\widetilde T=0$, a nonzero scalar target
coordinate multiplying a degree-$d$ class is a complex multiple of
$q^{(2-d)/(2(r-1))}$ by homogeneity. After $S_\lambda$ its valuation is
$$
 v\left(\lambda^{2-d}q_*^{(2-d)/(2(r-1))}\right)
   =(d-2)\left(1-\frac1{2(r-1)}\right)>0
 \qquad(d\ge4).$$
Thus all nonzero initial higher-degree coordinates lie strictly inside
the required open unit discs.

Write $Y_0=X$, $Y_j=X'$ for $j\ge1$, and let $\delta_j$ be the divisor
part of $\tau_j$. The initial-value formulas
\cite[Theorem~5.18(6), (5.19)]{iritani2025quantumcohomologyblowups} give
\[
 \delta_j(0,q,0)=h_j,\qquad h_0=0,\qquad
 h_j=\frac{2\pi\mathrm i(j-1/2)}{r-1}c_1(N_i)\quad(1\le j\le r-1).
\]
Thus $|\delta_j-h_j|<1$ on a common smaller neighborhood for all large
$R$, where the norm is the maximum coefficient norm in an integral
rational divisor basis. In particular,
$|(\delta_j-h_j)\cdot\beta|<1$ for every integral curve class $\beta$.
Let $Q_{Y_0,*}=Q_*$ and, for $j\ge1$, let $Q_{Y_j,*}^\beta=Q_*^{i_*\beta}q_*^{-c_1(N_i)\cdot\beta/(r-1)}.$
Distinguish the divisor-absorbed, unscaled characters from the final
scaled characters:
\[
 \begin{aligned}
 Q_{Y_j,\mathrm{abs}}^\beta
   &:=Q_{Y_j,*}^\beta\exp_\CC(h_j\cdot\beta)
                 \exp_\KK((\delta_j-h_j)\cdot\beta),\\
 Q_{Y_j}^\beta&:=\lambda^{2c_1(Y_j)\cdot\beta}
                     Q_{Y_j,\mathrm{abs}}^\beta.
 \end{aligned}
\]
Since
$v(\lambda)=-1$, the scaled valuation classes are
\[
 R\omega-2c_1(X),\qquad
 Ri^*\omega-\frac{c_1(N_i)}{r-1}-2c_1(X').
\]
They are ample for $R\gg0$. Together with the higher-coordinate bounds,
this defines the target chart
\[
 \sfA_{\mathrm{tar}}((\tau_j)_j)
 =\left( Q_{Y_j},\ \lambda^2\tau_j^{(0)},\
           (\lambda^{2-d}\tau_j^{(d)})_{d\ge4}\right)_{0\le j\le r-1}
\]
on a neighborhood of $\sfF_R(0)$, with image in the prescribed target base $B_X\times B_{X'}^{r-1}$.

Now we are ready to calculate quantum connections. 
The unscaled target parameters may be large, so convergence of the maps
alone is insufficient. For $Y=X$ or $X'$, put $\omega_0=\omega$ or
$i^*\omega$. After the string and divisor equations, a term
$Q_Y^\beta\prod_{d_l\ge4}T_l^{n_l}$ in the coefficient of $\phi_k$ in
$\phi_i\star_\tau\phi_j$ satisfies
\begin{equation}\label{eq:dimension}
 \sum_{d_l\ge4}(d_l-2)n_l
 =2c_1(Y)\cdot\beta+\deg\phi_k-\deg\phi_i-\deg\phi_j.
\end{equation}
For a fixed divisor $D$, choose $C_D$ with $C_D\omega_0\pm D$ ample;
then $|D\cdot\beta|\le C_D\omega_0\cdot\beta$ for effective $\beta$.
Thus $\sum_l n_l\le A+B\omega_0\cdot\beta$ for fixed $A,B\ge0$.
Let $M\ge1$ bound the unscaled higher-degree coordinates on the common
closed neighborhood. Their derivatives are bounded there as well.
For a constant $C$ independent of $R$, we have
\[
 |Q_Y^\beta|\le|s|^{(R-C)\omega_0\cdot\beta},\qquad
 |\mathrm{term}|\le
 M^A\bigl(M^B|s|^{R-C}\bigr)^{\omega_0\cdot\beta}.
\]
Choose $R$ with $M^B|s|^{R-C}<1$. By Lemma~\ref{lem:finite} and
\eqref{eq:dimension}, bounded ample degree leaves finitely many terms.
The target products and their composed first derivatives therefore
converge normally. 

For upstairs, fix $R_0$ with $A_0=R_0\pr^*\omega-E$ ample.
On $|Q_i|\le|s|^R$, $|q|=|s|$, $R\ge R_0$, one has
\[
 \bigl|Q^{\pr_*\widetilde\beta}q^{-E\cdot\widetilde\beta}\bigr|
 \le |s|^{A_0\cdot\widetilde\beta}.
\]
The estimates hold before specialization, hence the formal
identities as after (5.36) in \cite{iritani2025quantumcohomologyblowups},
remain analytic identities. Apply Lemma~\ref{lem:homothety} gives the corresponding result in 
scaled case.

Let $J_0(q)$ be the Hodge-class Jacobian of $\sfF$ at
$Q=\widetilde T=0$. It is invertible by the formal theorem and Hodge-equivariance. Moreover,
\[
 \dim B_{\widetilde X}
 =\dim_\QQ H^*(X)^{\hdg}+(r-1)\dim_\QQ H^*(X')^{\hdg}
 =\dim(B_X\times B_{X'}^{r-1}).
\]
If source and target scalar coordinate weights are $w_a,w'_b$, the
$(b,a)$ entry of $J_0(q)$ has weight $w'_b-w_a$. At $Q=\widetilde T=0$
it is zero or a Laurent monomial in $q'$; all determinant terms have
weight $\sum_b w'_b-\sum_a w_a$. Consequently
\[
 \det J_0(q)=c(q')^m,\quad c\in\CC^\times,\qquad
 J_R:=d_{\widetilde T}\sfF_R(0)\longrightarrow J_0(q_*).
\]
For $R\gg0$,
\[
 J_R=J_0(q_*)\bigl(I+J_0(q_*)^{-1}(J_R-J_0(q_*))\bigr),\qquad
 \|J_0(q_*)^{-1}(J_R-J_0(q_*))\|<1.
\]
Thus $J_R$ is invertible, and the analytic inverse-function theorem gives
an isomorphism from a germ at $\widetilde T=0$ to one at $\sfF_R(0)$.

Let $\widetilde\delta$ be the source divisor part. The source chart is
\[
 \sfA_{\mathrm{src}}(\widetilde T)
 =\left(\widetilde Q_*e^{\widetilde\delta},\
        \widetilde\tau^{(0)},(\widetilde\tau^{(d)})_{d\ge4}\right),\qquad
 (\widetilde Q_*e^{\widetilde\delta})^{\widetilde\beta}
 :=\widetilde Q_*^{\widetilde\beta}
                \exp_\KK(\widetilde\delta\cdot\widetilde\beta).
\]
For either this chart or $\sfA_{\mathrm{tar}}$, a divisor basis $D_a$ and
numerical curve basis $\gamma_b$ give
\[
 d\log Q^{\gamma_b}=\sum_a(D_a\cdot\gamma_b)\,d\delta_a,
 \qquad\det(D_a\cdot\gamma_b)\ne0.
\]
Here $d\log z=z^{-1}dz$; no logarithm of a non-small constant is evaluated.
The remaining Jacobian blocks are identities or nonzero scalings. Thus
both charts are local isomorphisms by the argument of
Lemma~\ref{lem:divisor}, with target membership checked above. After
shrinking, we have the claimed isomorphism:
$$f=\sfA_{\mathrm{tar}}\circ\sfF_R\circ\sfA_{\mathrm{src}}^{-1}:
 V\xrightarrow{\sim}V'.$$

Let $\nabla_{\mathrm{tar}}$ be the external sum of the connection of $X$
and $r-1$ copies of that of $X'$, and let $\cD_\lambda$ be the corresponding
direct sum of $D_\lambda$. Define
\[
 \begin{aligned}
 \Phi(x,u)&=(f(x),\lambda^2u),\\
 G(x,u)&=\cD_\lambda^{-1}
 \widetilde\Psi\bigl(Q_*,q_*,\sfA_{\mathrm{src}}^{-1}(x),u\bigr)^{-1}.
 \end{aligned}
\]
The first factor $\widetilde\Psi^{-1}$ maps the source to the unscaled
target; $\cD_\lambda^{-1}$ then maps it to the scaled pullback. For
$\nabla_{\mathrm{src}}=d+\Omega_{\mathrm{src}}$ and
$\nabla_{\mathrm{tar}}=d+\Omega_{\mathrm{tar}}$, the evaluated identities give
\[
 \begin{gathered}
 dG(x,u)+\Phi^*\Omega_{\mathrm{tar}}G(x,u)
       =G(x,u)\Omega_{\mathrm{src}},\\
 G(x,u)^{-1}\Phi^*\nabla_{\mathrm{tar}}G(x,u)=\nabla_{\mathrm{src}}.
 \end{gathered}
\]
Both matrices are analytic, regular at $u=0$, and preserve the Hodge-class
subbundles. Shrink the loop disc so that $\lambda^2u$ lies in the target
disc. Their completions identify the original full and Hodge-class
lattices over $\cO_V[[u]]$.
On corresponding primary factors, comparison of the leading terms gives
\[
 G(x,0)^{-1}\lambda^{-2}\kappa_{\mathrm{tar}}(f(x))G(x,0)
   =\kappa_{\mathrm{src}}(x).
\]
On their rank-two Hodge-class parts this gives
\[
 \alpha_{\mathrm{tar}}(f(x))=\lambda^2\alpha_{\mathrm{src}}(x),\qquad
 G(x,0)^{-1}\lambda^{-2}N_{\mathrm{tar}}(f(x))G(x,0)=N_{\mathrm{src}}(x).
\]
Thus the normalized connections and $L^\sharp$ are identified. Since $u\partial_u=U\partial_U$,
Lemma~\ref{lem:residue-lattice} preserves logarithmicity and the exact
residue polynomial. Choosing one $R$ satisfying all preceding conditions
and shrinking to connected neighborhoods proves the theorem.

\end{proof}

\begin{corollary}\label{cor:blowup}
For every smooth center $X'\subset X$ of pure codimension $r\ge2$,
\[
 n_{\mathrm{cub}}(\Bl_{X'}X)
   =n_{\mathrm{cub}}(X)+(r-1)n_{\mathrm{cub}}(X').
\]
\end{corollary}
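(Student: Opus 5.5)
The plan is to combine Theorem~\ref{thm:local-comparison} with the invariance properties of Proposition~\ref{prop:invariance}. First I reduce to the case where $X$ and $X'$ are connected. Both sides are additive over connected components of $X$, and a blowup along a center lying in one component leaves the other components unchanged. If $X'$ is disconnected, I blow up its components one at a time. Since the components are disjoint, after blowing up one of them the remaining ones are smooth centers of the same codimension, and their quantum data are unchanged. So the formula follows by induction from the connected case.

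For connected $X$ and $X'$, Theorem~\ref{thm:local-comparison} gives, possibly after finite \'etale covers of the Novikov tori, nonempty connected admissible opens $V\subset B_{\widetilde X}$ and $V'\subset B_X\times B_{X'}^{r-1}$. It also gives an isomorphism $f\colon V\to V'$ and a lift $\Phi(x,u)=(f(x),\lambda^2u)$ to an isomorphism of full quantum connections. This lift identifies the Hodge-class subbundles and the full and Hodge-class lattices. I then run the following chain of equalities:
\[
 n_{\mathrm{cub}}(\widetilde X)=n_{\mathrm{cub}}(V)=n_{\mathrm{cub}}(V')
 =n_{\mathrm{cub}}\bigl(B_X\times B_{X'}^{r-1}\bigr)
 =n_{\mathrm{cub}}(X)+(r-1)n_{\mathrm{cub}}(X').
\]
The first and third equalities are restriction invariance. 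The second is invariance under connection isomorphisms. The rescaling $u\mapsto\lambda^2u$ is the constant homothety allowed in that part of Proposition~\ref{prop:invariance}, and it is compatible with the normalization by Lemma~\ref{lem:homothety}. The last equality is additivity for external direct sums. Wherever a finite \'etale cover of the Novikov torus was used, I insert finite \'etale pullback invariance. Such a cover induces a connected finite \'etale cover of $B_X$, because it is a cover of the torus factor restricted to the ample-valuation region, and connectedness can be arranged by choosing the cover from a sublattice.

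The main obstacle is the middle restriction step. The open set $V'$ is a connected open in the product base $B_X\times B_{X'}^{r-1}$, and restriction invariance applies to the external direct sum on that product. The additivity argument in Proposition~\ref{prop:invariance}, however, was carried out only on a specially chosen small product neighborhood, after translating the unit parameters so that the spectra separate. I would bridge this as follows. Restriction invariance says the count on any nonempty connected admissible open of the product base equals the count on the whole product base. The product base is smooth and connected, and the invariance of the maximal spectral locus under open restriction comes from the identity principle. Hence the count on $V'$ equals the count on the special neighborhood, and the two computations agree.

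I also need to check two further points. First, restriction to $V$ must not lose counted components. This holds because each component of $\widetilde U$ surjects onto $U$, and counted conditions can be tested at a single rigid point. Second, the identification of lattices in Theorem~\ref{thm:local-comparison} must carry $\ker N$ to $\ker N$ factorwise, which was recorded at the end of that proof.
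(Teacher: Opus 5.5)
Your proposal is correct and follows the paper's proof. You reduce to connected $X$ and $X'$ by blowing up disjoint components one at a time, use Theorem~\ref{thm:local-comparison} to identify the blowup connection with the external direct sum of one copy for $X$ and $r-1$ copies for $X'$, and conclude with the restriction, finite \'etale pullback, connection-isomorphism and additivity parts of Proposition~\ref{prop:invariance}; your remark that additivity is only checked on a small product neighborhood and must be carried to $V'$ by restriction invariance spells out a step the paper leaves implicit.
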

\begin{proof}
First suppose the ambient variety and center are connected.
Theorem~\ref{thm:local-comparison} identifies the blowup connection with
an external direct sum of one copy for $X$ and $r-1$ copies for $X'$,
after finite \'etale torus covers and nonempty open restriction.
All these operations preserve the count by
Proposition~\ref{prop:invariance}, and external direct sums are additive.
Thus
\[
 n_{\mathrm{cub}}(\Bl_{X'}X)
  =n_{\mathrm{cub}}(X)+\sum_{j=1}^{r-1}n_{\mathrm{cub}}(X')
  =n_{\mathrm{cub}}(X)+(r-1)n_{\mathrm{cub}}(X').
\]
For a disconnected smooth center, its connected components are disjoint.
Blow them up successively. Summing the connected formulas gives the result. Finally apply the same
argument on the connected components of a disconnected $X$.
\end{proof}

\section{Proof of the main theorem}\label{sec:main-proof}
We apply Cai's cubic--curve obstruction and then exclude smooth projective surfaces. The transport and blowup results of the preceding sections provide the compatibility needed for weak factorization.

Cai works with even cohomology \cite[Section~1]{Cai}, which for a smooth cubic threefold and for a smooth projective curve equals the complexified Hodge-class subspace. His Propositions~6 and~7 distinguish the relevant factors by their exponents modulo $\ZZ$. We use the following lattice form of his calculation, which records the exact residue polynomial on the specified lattice, without repeating his matrix reduction.

\begin{lemma}\label{lem:cubic}
Let $Y\subset\PP^4_\CC$ be a smooth cubic threefold. At $T_i=0$ and a nonzero sufficiently small line parameter $Q_*$,
\[
 \mathrm{Sp}^Y_{\ev}=\{0,6\sqrt{3Q_*},-6\sqrt{3Q_*}\},\qquad
 \dim_\KK E^Y_{\ev,0}=12,\quad \rho^Y_{\ev,0}=2,\quad
 \gamma^Y_{\ev,0}=1.
\]
The two nonzero primary factors have rank one. On the Hodge-class part of the zero factor, $L^\sharp$ is logarithmic with residue polynomial $P_{\mathrm{cub}}$.
\end{lemma}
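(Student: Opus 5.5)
The plan is to compute everything explicitly from the small quantum cohomology of $Y$. We use Lemma~\ref{lem:divisor} and the string equation to reduce to the point $T_i=0$ with one Novikov variable $Q$ (the line class, $\deg Q=4$), and $c_1(Y)=2h$.

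\textbf{Step 1: the even (Hodge-class) part.} The even part $H^*(Y)^{\hdg}_\KK$ has basis $1,h,\ell,\mathrm{pt}$ with $h^2=3\ell$. The small quantum product is classical: from the two-pointed line invariants we read off the relation
\[
h^{\star 4}=27Q\,h^{\star 2}
\]
on this subalgebra. Hence the even algebra is
\[
\KK[h]/\bigl(h^2(h^2-27Q)\bigr)\simeq \KK[h]/(h^2)\times\KK\times\KK .
\]
Since $\kappa_\tau=2h\star$, its reduced spectrum is $\{0,\pm 6\sqrt{3Q}\}$. By the minimal-polynomial remark after Lemma~\ref{lem:separation}, the spectra of the full and Hodge-class actions agree, and the primary idempotents are the same elements. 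So $\rho^Y_{\ev,0}=2$, and the two nonzero factors have Hodge rank one.

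On the zero factor, $h$ acts as a nonzero nilpotent with $h^2=0$, so $\gamma^Y_{\ev,0}=1$.

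\textbf{Step 2: the full rank.} The odd part $H^3$ has rank $10$. Quantum multiplication by $h$ on it comes from three-point invariants with two odd insertions. By degree counting, these invariants change degree by $2-4k$, so only degree-zero terms contribute and $h\star$ acts on $H^3$ by zero. The nonzero idempotents kill $H^3$, because they are polynomials in $h$ with vanishing constant term up to the unit. Therefore
\[
\dim_\KK E^Y_{\ev,0}=2+10=12,
\]
and each nonzero factor has full rank one.

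\textbf{Step 3: the residue, which is the main obstacle.} We restrict the connection $\partial_u-u^{-2}\kappa_\tau+u^{-1}\Gr_Y$ to the even $4\times 4$ block. Then we compute the horizontal projector $P(u)=P_0+uP_1+O(u^2)$ of Lemma~\ref{lem:separation} to first order by solving the commutator equations order by order. $P_0$ is multiplication by the zero-factor idempotent
\[
e_0=1-\frac{h^2}{27Q}.
\]
$P_1$ is obtained from the Sylvester equation
\[
[\kappa_\tau,P_1]=[\Gr_Y,P_0]\text{-type terms},
\]
which is solvable because the spectra are disjoint.

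In the induced frame $e_1=\overline{N\text{-image}}\propto h e_0$ and $e_2=e_0$, we read off $n$, $a_{ij}$ and $b_{21}$ exactly as in the proof of Lemma~\ref{lem:residue-lattice}. We then check that $a_{21}=0$, so $L^\sharp$ is logarithmic. Finally we verify
\[
\tr R^\sharp=a_{11}+a_{22}+1=1,\qquad \det R^\sharp=a_{11}(a_{22}+1)+n\,b_{21}=\tfrac{5}{36}.
\]

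The delicate point is that the entries $a_{ij}$ involve both the compressed grading $P_0\Gr_YP_0$ and the first-order correction $P_1$. The answer must be exact and not only correct mod $\ZZ$. As a cross-check we will compare with Cai's exponents \cite[Propositions~6 and~7]{Cai}: the regular part of the quantum differential operator $\theta^4-27Q\,\theta^2$-type at the zero eigenvalue has local exponents $1/6,5/6$ mod $\ZZ$. We will fix the integer shift by the trace identity
\[
\tr R^\sharp=\tr(P_0\Gr_YP_0)+1+(\text{$P_1$ contribution}),
\]
using that $\Gr_Y$ on $\{1,h\}$ has eigenvalues $-3/2,-1/2$.

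By Lemma~\ref{lem:residue-lattice} and Proposition~\ref{prop:transport}, the result at this single rigid point propagates to the whole component.
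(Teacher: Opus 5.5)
\textbf{Steps 1 and 2.} These are sound, and they recover what the paper takes from Cai together with its dimension count. The relation $h^{\star4}=27Q\,h^{\star2}$ gives the spectrum and the Hodge-class ranks. This is Givental's relation; its $Q^2$ term needs the conic count $\langle\mathrm{pt},\mathrm{pt}\rangle_{0,2,2}=6$, not only the line invariants you cite. The vanishing $h\star|_{H^3}=0$ places all odd classes in the zero factor.

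\textbf{The gap is Step 3, which is the actual content of the lemma.} You announce that $a_{21}=0$ and $\det R^\sharp=5/36$ will be checked, but no entry is computed, and the bookkeeping you sketch is wrong.
\begin{itemize}
\item There is no $P_1$ contribution at order $u^{-1}$. Idempotency forces $P_0P_1P_0=0$, so the $u^{-1}$ coefficient of the separated block is exactly $P_0\Gr_YP_0$.
\item The zero factor has nothing to do with $\{1,h\}$. It is spanned by $v_1=9Q_*\,he_0=2Q_*h-\mathrm{pt}$, which spans $\ker N$, and $v_2=9Q_*\,e_0=7Q_*-\ell$, with $\kappa_\tau v_2=2v_1$.
\item In this frame $P_0\Gr_YP_0=\operatorname{diag}(19/18,-19/18)$. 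Its trace is $0$, as it must be since $\Gr_Y$ is skew and $P_0$ self-adjoint for the Poincar\'e pairing.
\item $a_{21}=0$ is automatic, because $\ker N$ is isotropic.
\end{itemize}
Hence $\tr R^\sharp=1$ holds for free and cannot fix the integer shift. Exponents $\{1/6,5/6\}$ modulo $\ZZ$ with trace $1$ still allow $(t-1/6-m)(t-5/6+m)$ for every $m\in\ZZ$; for $m=1$ the determinant is $-7/36$. So your cross-check against Cai does not pin down $P_{\mathrm{cub}}$.

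\textbf{What you actually have to compute.} The polynomial is decided by $b_{21}$, the $u^0$ coefficient of the block, through $\det R^\sharp=a_{11}(a_{22}+1)+n\,b_{21}$.
\begin{itemize}
\item Work in the basis $(v_1,v_2,w_+,w_-)$, with $w_\sigma$ the $h$-eigenvectors for $\sigma=\pm\sqrt{27Q_*}$. Take an off-diagonal gauge $I+uT_1$ with $[\kappa_\tau,T_1]=(\Gr_Y)_{\mathrm{off}}$.
\item The zero block's $u^0$ term is then $\bigl([\Gr_Y,T_1]+T_1\kappa_\tau T_1-T_1^2\kappa_\tau\bigr)_{ZZ}$.
\item In your projector language, $P_2$ enters only through $P_0P_2P_0=-P_0P_1^2P_0$. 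So first-order data suffice, but this must be said.
\item Each rank-one factor contributes $4/81$ to the $(2,1)$ entry. Hence $b_{21}=8/81$ and $\det R^\sharp=-19/324+16/81=5/36$.
\end{itemize}
These are exactly the values $n=2$, $a_{11}=-a_{22}=19/18$, $b_{21}=8/81$ that the paper reads off from Cai's separating matrix. With this computation supplied, your route would be a self-contained alternative to quoting Cai's frame, and it explains logarithmicity structurally. As written, however, the exact residue polynomial is asserted rather than proved.
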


\begin{proof}
The even Jordan decomposition and fractional exponents are computed in \cite[Section~3 and Proposition~6]{Cai}; compare the small-quantum spectrum in \cite[Example~6.21]{KKPY}. The only odd cohomology is $H^3(Y,\QQ)$, of dimension 10 \cite{CG}.

Quantum multiplication by the hyperplane class vanishes there. In fact, the dimension equation for $\langle H,\gamma_1,\gamma_2\rangle_{0,3,d}$, with $\gamma_i\in H^3(Y)$, would be $8=6+4d$, which is impossible for integral $d$. Thus $H^3(Y)$ lies in the zero factor and $\kappa_\tau$ vanishes on it. Cai's even zero block has rank two with a single nonzero nilpotent entry, while the other two even factors have rank one. This gives $\dim_\KK E^Y_{\ev,0}=12,$ $\rho^Y_{\ev,0}=2,$ and $\gamma^Y_{\ev,0}=1.$

For the exact lattice, after the constant change of basis putting the
leading term in Jordan form, Cai uses a separating matrix $A=I+O(u),$
with coefficients in $\CC[Q^{1/2},Q^{-1/2}]$ and inverse of the same form. After specialization at $Q_*$ it therefore preserves the original formal lattice. By uniqueness in Lemma~\ref{lem:separation}, its zero block is our separated zero factor.

In Cai's frame, this block satisfies 
$$u^2\partial_u\widetilde S = \left[
 \begin{pmatrix}0&2\\0&0\end{pmatrix}
 +u\begin{pmatrix}-19/18&0\\0&19/18\end{pmatrix}
 +u^2\begin{pmatrix}0&*\\-8/81&0\end{pmatrix}
 +O(u^3)
 \right]\widetilde S.$$
Hence, in the notation of Lemma~\ref{lem:residue-lattice}, $n=2,$ $a_{11}=\frac{19}{18},$ $a_{22}=-\frac{19}{18},$ $a_{21}=0,$ $b_{21}=\frac{8}{81}.$ Thus $L^\sharp$ is logarithmic, and 
$$ R_Y^\sharp=
 \begin{pmatrix}19/18&-2\\8/81&-1/18\end{pmatrix},\qquad
 \det(tI-R_Y^\sharp)=t^2-t+5/36.$$

\end{proof}

For later use with $\PP^1$, let $J_\pm$ be its two small-quantum line
factors. In the basis $(1,h)$, where $h=c_1(\cO_{\PP^1}(1))$, put
\[
 M_h=\begin{pmatrix}0&Q'_*\\1&0\end{pmatrix},\qquad
 P_\pm=\tfrac12(I\pm M_h/\sqrt{Q'_*}),\qquad
 \Gr_{\PP^1}=\operatorname{diag}(-1/2,1/2).
\]
After removing the scalar leading term, the residue of $J_\pm$ is $c_\pm=\tr(P_\pm\Gr_{\PP^1})=0.$
Indeed, a regular separating change-of-basis changes the coefficient of $u^{-1}$ by a commutator with the semisimple leading endomorphism, whose diagonal blocks vanish. Hence the residue on each rank-one factor is the corresponding diagonal block of $\Gr_{\PP^1}$, whose trace is $\tr(P_\pm\Gr_{\PP^1})$.

More generally, let $L$ be a rank-two primary lattice and $J$ a rank-one primary lattice with normalized residue $c$. After removing the sum of the two scalar leading terms, the leading nilpotent on $L\otimes J$ is $N_{L\otimes J}=N_L\otimes I.$ As a result,

$$(L\otimes J)^\sharp=L^\sharp\otimes J,\qquad
 R^\sharp_{L\otimes J}=R^\sharp_L+cI,$$ 
whenever $L^\sharp$ is logarithmic.

Also note that by \cite[Theorem~9]{Behrend}, at $T_i=0$, we have
$$ \kappa^{A\times\PP^1}_0=\kappa^A_0\otimes I+I\otimes\kappa^{\PP^1}_0,
 \qquad
 \Gr_{A\times\PP^1}=\Gr_A\otimes I+I\otimes\Gr_{\PP^1}.$$

\begin{proposition}\label{prop:cubic-product}
For every smooth cubic threefold $Y$,
\[n_{\mathrm{cub}}(Y)=1,\qquad n_{\mathrm{cub}}(Y\times\PP^1)=2.\]
\end{proposition}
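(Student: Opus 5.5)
Both counts will be computed with Corollary~\ref{Cor:local-count}, evaluated at a single rigid point where the hypotheses are checked directly.

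\emph{The count for $Y$.} Take the point $x_*$ of Lemma~\ref{lem:cubic}: $T_i=0$ and $Q_*$ small and nonzero, with $v(Q_*)>0$ so that $x_*\in B_Y$. That lemma lists the primary factors at $x_*$:
\begin{itemize}
\item two rank-one factors, with eigenvalues $\pm6\sqrt{3Q_*}$, each of Hodge-class rank one;
\item the zero factor, with $r=12$, $\rho=2$ and $\gamma=1$.
\end{itemize}
Since $\gamma=1$, the leading endomorphism of the zero factor is nonscalar, so the hypotheses of Corollary~\ref{Cor:local-count} hold and $x_*\in U_Y$. The same lemma says that $L^\sharp$ is logarithmic on the zero factor with residue polynomial $P_{\mathrm{cub}}$. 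Only the zero factor satisfies the rank conditions, hence $n_{\mathrm{cub}}(Y)=1$.

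\emph{Spectrum of $Y\times\PP^1$.} Use the product point given by $x_*$ together with a small Novikov parameter $Q'_*$ for $\PP^1$, again with $T_i=0$. By \cite[Theorem~9]{Behrend} the leading operator is $\kappa^Y_0\otimes I+I\otimes\kappa^{\PP^1}_0$. The eigenvalues of $\kappa^{\PP^1}_0$ are $\pm2\sqrt{Q'_*}$ (the eigenvalues of $2M_h$, since $c_1(\PP^1)=2h$). The six sums $\{0,\pm6\sqrt{3Q_*}\}+\{\pm2\sqrt{Q'_*}\}$ are pairwise distinct once $Q'_*$ is chosen generically relative to $Q_*$. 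In that case every primary factor of the product is a tensor product $F\otimes J_\pm$ of a primary factor $F$ of $Y$ with one line factor $J_\pm$, and its full and Hodge-class ranks multiply: $H^*(\PP^1)$ is all Hodge, so the Hodge-class part of $F\otimes J$ is $F^{\mathrm{Hdg}}\otimes J$. This produces four factors of rank one and two factors $E_0\otimes J_\pm$ with full rank $12$ and Hodge-class rank $2$. On the latter the nilpotent part is $N\otimes I\neq0$, so Corollary~\ref{Cor:local-count} applies again.

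\emph{Residues on $Y\times\PP^1$.} On $E_0\otimes J_\pm$ we have $(L\otimes J)^\sharp=L^\sharp\otimes J$, which is logarithmic, and its residue is $R^\sharp_L+c_\pm I$. Because $c_\pm=0$, the residue polynomial is still $P_{\mathrm{cub}}$. Both factors are therefore counted, giving $n_{\mathrm{cub}}(Y\times\PP^1)=2$.

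\emph{Main obstacle.} The delicate step is the claim that, at the product point, the separated primary factors of the product connection really are the tensor products of the separated factors. This has to hold for the full lattices and not merely after taking the leading term. The plan for it is:
\begin{enumerate}
\item By Behrend's product formula, the whole quantum connection at $T_i=0$ is the tensor product of the two connections, since $\Gr$ is additive as well.
\item The projectors $P_F\otimes P_\pm$ are therefore horizontal, and they reduce to the spectral projectors at $u=0$.
\item By the uniqueness statement in Lemma~\ref{lem:separation}, they coincide with the primary projectors of the product.
\end{enumerate}
The tensor-residue formula also needs justification. Normalizing by $\alpha_L+\alpha_J$ centres both factors. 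Since $J$ has rank one, after a regular gauge it has the form $\partial_u+c/u+O(1)$, so $\ell$ and $L^\sharp$ are simply tensored with $J$ and the residue shifts by $c$. Finally, one must check that the product point lies in $B_{Y\times\PP^1}$: the valuation class should be ample, which holds when both $v(Q_*)$ and $v(Q'_*)$ are positive.
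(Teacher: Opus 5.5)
Your proposal is correct and follows the paper's proof. Both compute the two counts via Corollary~\ref{Cor:local-count} at a small-quantum point, using Lemma~\ref{lem:cubic}, Behrend's product formula, the tensor-product description of the primary factors of $Y\times\PP^1$, and the residue shift $R^\sharp_{L\otimes J}=R^\sharp_L+c_\pm I$ with $c_\pm=0$. Your genericity condition on $Q'_*$ plays the role of the paper's $|Q_*|\ne|Q'_*|$, and your justification of the tensor splitting by the uniqueness argument of Lemma~\ref{lem:separation} (which only uses the vertical equation, so it applies fiberwise) spells out what the paper leaves to the discussion preceding the proposition.
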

\begin{proof}
Lemma~\ref{lem:cubic} and Corollary~\ref{Cor:local-count} give $n_{\mathrm{cub}}(Y)=1$. For $Y\times\PP^1$, choose nonzero sufficiently small $Q_*,Q'_*$ with $|Q_*|\ne|Q'_*|$. By the discussion right above, the six eigenvalues are $ \pm2\sqrt{Q'_*}, \pm6\sqrt{3Q_*}\pm2\sqrt{Q'_*},$ and are pairwise distinct. There are only two factors have full rank 12 and Hodge-class rank 2, namely the tensor products of the cubic zero factor with $J_+$ and $J_-$. Their leading nilpotents are nonzero, and their residue polynomial are precisely $P_{\mathrm{cub}}$. The other four factors have rank one. By Corollary~\ref{Cor:local-count} again, we have $n_{\mathrm{cub}}(Y\times\PP^1)=2$.
\end{proof}

\begin{lemma}\label{lem:curve-residue}
Let $C$ be a smooth projective curve of genus $g>1$. On its Hodge-class primary lattice, $L^\sharp$ is logarithmic and $\det(tI-R_C^\sharp)=(t-1/2)^2.$ 
\end{lemma}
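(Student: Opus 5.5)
The plan is to compute everything explicitly: for a curve of genus $g>1$ the quantum connection has no quantum corrections, so the Hodge-class primary lattice and its elementary modification can be written down in a constant frame. We then read off the residue from the formula for $R^\sharp$ in Lemma~\ref{lem:residue-lattice}.

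\emph{Step 1: vanishing of quantum corrections.} Since $g\geq 1$, there are no nonconstant maps $\PP^1\to C$. Hence every genus-zero invariant $\langle\cdots\rangle^C_{0,n,\beta}$ with $\beta\neq0$ vanishes, and $\star_\tau$ is the cup product. Moreover, $H^*(C)^{\hdg}=H^0\oplus H^2$ has basis $(1,p)$ with $p$ the point class. So $h=1$, there are no coordinates $T_i$ with $d_i\ge4$, and the base is the Novikov torus times the $T_0$-line. The Euler field is $\Eu_\tau=c_1(C)+T_0\cdot 1$, because the divisor coefficient $1-d_i/2$ vanishes. Therefore
\[
 \kappa_\tau=T_0I+(2-2g)\,p\cup(-),
\]
uniformly on all of $B_C$. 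Its only eigenvalue is $\alpha=T_0$, and $N=(2-2g)p\cup$ satisfies $N^2=0$ and is nonzero precisely because $g\neq1$.

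\emph{Step 2: the Hodge-class primary lattice.} Since the spectrum is a single point, the primary projector is the identity. Then $L=\cL_C^{\hdg}=\cO[[u]]\,p\oplus\cO[[u]]\,1$ has rank two, and the full factor has rank $2g+2$; we do not need the odd part, on which $\kappa_\tau$ also acts by $T_0$. We have $\ell=\ker N=\im N=\cO\,\bar p$. In the notation of Lemma~\ref{lem:residue-lattice}, take the adapted frame $e_1=p$, $e_2=1$, so that $Ne_2=(2-2g)e_1$ and $n=2-2g\in\cO^\times$.

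\emph{Step 3: the normalized connection and residue.} The normalized vertical connection is exactly
\[
 \nabla^0_{\partial_u}=\partial_u-u^{-2}N+u^{-1}\Gr_C,
\]
with $\Gr_C=\operatorname{diag}(1/2,-1/2)$ in the frame $(p,1)$, and with no higher-order terms. Thus $(a_{ij})=\operatorname{diag}(1/2,-1/2)$, $a_{21}=0$, and $b_{21}=0$. By Lemma~\ref{lem:residue-lattice}, $L^\sharp=\cO[[u]]p\oplus\cO[[u]]u\cdot1$ is logarithmic, and
\[
 R^\sharp_C=\begin{pmatrix}1/2&-(2-2g)\\0&1/2\end{pmatrix},\qquad \det(tI-R^\sharp_C)=(t-1/2)^2 .
\]
As a direct check, $u\nabla^0_{\partial_u}(u\cdot1)=u\cdot1-\tfrac12u\cdot1-(2-2g)p$, which lies in $L^\sharp$.

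The main obstacle is bookkeeping rather than analysis. We must keep the sign and orientation conventions consistent: the ordering of $e_1,e_2$ relative to $\ell$, and the sign of $\Gr$ on $H^0$ versus $H^2$. We must also justify rigorously that the identity projector is the unique separated lift, which follows from the uniqueness in Lemma~\ref{lem:separation}. Since the computation is exact and holds at every rigid point, no transport argument is needed.
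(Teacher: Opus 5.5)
Your proposal is correct and is essentially the paper's proof. In both, there are no quantum corrections, so the normalized connection in the frame $(p,1)$ is exactly $\partial_u-u^{-2}N+u^{-1}\operatorname{diag}(1/2,-1/2)$ with $n=2-2g$, and passing to the frame $(p,u\cdot 1)$ of $L^\sharp$ gives $R^\sharp_C=\begin{pmatrix}1/2&2g-2\\0&1/2\end{pmatrix}$, whose characteristic polynomial is $(t-1/2)^2$.
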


\begin{proof}
    Put $\eta=[\mathrm{pt}]$. Since $C$ has no nonconstant genus-zero stable maps, its quantum product is the usual product. After removing the unit term, the connection in the frame $(\eta,1)$ is 
    $$ \nabla^0_{\partial_u}  =\partial_u-\frac1{u^2}  \begin{pmatrix}0&2-2g\\0&0\end{pmatrix}  +\frac1u\begin{pmatrix}1/2&0\\0&-1/2\end{pmatrix}.$$
    Thus $\ell=\KK\eta$ and $L^\sharp=\KK[[u]]\eta\oplus u\KK[[u]]1$. In the frame $(\eta,u1)$, the residue is 
    $$ R_C^\sharp=\begin{pmatrix}1/2&2g-2\\0&1/2\end{pmatrix},$$
    which has characteristic polynomial $(t-1/2)^2$. Compare \cite[Proposition~7]{Cai}.
\end{proof}

\begin{lemma}\label{lem:curve-vanishing}
For every smooth projective curve $C$,
$n_{\mathrm{cub}}(C)=n_{\mathrm{cub}}(C\times\PP^1)=0$.
\end{lemma}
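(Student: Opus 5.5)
The plan is to compute the count locally via Corollary~\ref{Cor:local-count} at a convenient rigid point. The decisive observation is that a counted factor must have full rank $12$ and Hodge-class rank $2$. We will show that no primary factor of $C$ or of $C\times\PP^1$ can satisfy both conditions together with the residue condition $P_{\mathrm{cub}}$.

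\emph{The curve $C$.} Write $g$ for the genus.
\begin{itemize}
\item If $g\ge1$, then $C$ has no nonconstant genus-zero stable maps. The Novikov torus is one-dimensional, but all nonzero-degree invariants vanish, so at every point the quantum product is the classical one. Then $\kappa_\tau$ is multiplication by $c_1(C)+T_0$, and there is a single primary factor of full rank $2g+2$ and Hodge-class rank $2$. Its leading nilpotent is $(2-2g)\eta$, which is nonzero for $g\ne1$.
\item For $g\ne5$ the full rank is not $12$, so nothing is counted.
\item For $g=5$ the full rank is $12$. However, Lemma~\ref{lem:curve-residue} gives residue polynomial $(t-1/2)^2\neq P_{\mathrm{cub}}$, so again nothing is counted.
\item For $g=1$ the leading endomorphism is scalar. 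Corollary~\ref{Cor:local-count} does not directly apply there, so we argue from Definition~\ref{def:nu}: $\Sigma^\circ=\varnothing$, hence the factor is not counted.
\item For $g=0$, $C=\PP^1$ has two rank-one factors $J_\pm$, so nothing is counted.
\end{itemize}
In every case $n_{\mathrm{cub}}(C)=0$.

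\emph{The product $C\times\PP^1$.} We evaluate at $T_i=0$ with a nonzero small $Q'_*$ and use the product formula from \cite[Theorem~9]{Behrend} quoted above.
\begin{itemize}
\item For $g\ge1$, $\kappa$ equals $\kappa^C_0\otimes I+I\otimes M_h$ up to the unit shift. The eigenvalues are $\pm2\sqrt{Q'_*}$, each with full rank $2g+2$ and Hodge-class rank $2$. The factors are $L_C\otimes J_\pm$, with leading nilpotent $N_C\otimes I$.
\item For $g\neq1$ every factor has Hodge-class rank at most $2$ with nonscalar leading term, so Corollary~\ref{Cor:local-count} applies. The full rank is $12$ only when $g=5$. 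In that case the tensor rule $R^\sharp_{L\otimes J}=R^\sharp_L+cI$ with $c_\pm=0$ gives residue polynomial $(t-1/2)^2\ne P_{\mathrm{cub}}$.
\item For $g=1$ the nilpotent vanishes identically. By the string-equation shift argument and Definition~\ref{def:nu}, the primary components satisfy $\Sigma^\circ=\varnothing$; the choice of $Q'_*$ does not matter since Hodge rank $2$ still holds. Thus nothing is counted.
\item For $g=0$, $\PP^1\times\PP^1$ has rank $4$, so no factor can have full rank $12$.
\end{itemize}
Hence $n_{\mathrm{cub}}(C\times\PP^1)=0$.

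\emph{Expected obstacle.} The main difficulty is the genus-one case. There the rigid point does not satisfy the hypotheses of Corollary~\ref{Cor:local-count}, because the leading Hodge-class endomorphism is scalar. We must check from the definition that it is scalar on all of the component $\Sigma$, not just at one point. This holds because the quantum product of an elliptic curve (and of $E\times\PP^1$ on the elliptic factor) is classical, and $c_1=0$ makes the nilpotent identically zero over the whole base. A secondary check is that the argument does not depend on the precise genus-$5$ degeneracy, since the residue mismatch $(t-1/2)^2\neq P_{\mathrm{cub}}$ suffices.
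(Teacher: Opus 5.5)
Your proposal is correct and matches the paper's proof: rank counting disposes of the low-genus and non-matching cases, and the residue polynomial $(t-1/2)^2$ from Lemma~\ref{lem:curve-residue}, tensored with $J_\pm$ (where $c_\pm=0$), rules out the remaining case for both $C$ and $C\times\PP^1$. The differences are only bookkeeping.
\begin{itemize}
\item The paper handles all of $g\le1$ at once by total cohomology rank ($\le4$, resp.\ $\le8$) and applies the residue mismatch uniformly for $g>1$. You instead use rank for $g\neq5$ and the residue only at $g=5$.
\item Your separate genus-one analysis via $\Sigma^\circ=\varnothing$ is therefore correct but unnecessary, and the anticipated ``obstacle'' is vacuous. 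Full rank $4$ (resp.\ $8$) already excludes counting, as your own $g\neq5$ bullet shows.
\item The $\PP^1$ summand of the leading operator should be $2M_h$ rather than $M_h$. Your eigenvalues $\pm2\sqrt{Q'_*}$ are nonetheless right.
\end{itemize}
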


\begin{proof}
We may assume that $C$ is connected. If $g(C)\leq1$, the full cohomological ranks of $C$ and $C\times\PP^1$ are at most four and eight, respectively, so neither has a rank-twelve factor.

Suppose that $g(C)>1$. The unique primary factor of $C$ has Hodge-class rank two, and its specified logarithmic lattice has residue polynomial $(t-1/2)^2$ by Lemma~\ref{lem:curve-residue}. This differs from $P_{\mathrm{cub}}$.

At a nonzero small-quantum point, the two primary factors of $C\times\PP^1$ are obtained by tensoring the curve factor with $J_+$ and $J_-$. But they both have the same residue polynomial $(t-1/2)^2$. Corollary~\ref{Cor:local-count} proves our claims.
\end{proof}

\begin{proposition}\label{prop:surface}
For every smooth projective variety $S$ of dimension at most two, $n_{\mathrm{cub}}(S)=0$.
\end{proposition}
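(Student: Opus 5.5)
The plan is to show that no primary factor of $S$ can satisfy the rank condition $r_\Sigma=12$, $\rho_\Sigma=2$ of Definition~\ref{def:nu}. Where that fails, I would reduce $S$ by blowdowns to a minimal model and use Corollary~\ref{cor:blowup}.

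\textbf{Reductions.} Since $n_{\mathrm{cub}}$ is additive over connected components, assume $S$ is connected. A point has total cohomology of rank one, so $n_{\mathrm{cub}}=0$. Curves are handled by Lemma~\ref{lem:curve-vanishing}. So let $S$ be a connected surface.

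\textbf{Blowdowns.} Blowing up a point has $r=2$, so Corollary~\ref{cor:blowup} gives $n_{\mathrm{cub}}(\Bl_pS)=n_{\mathrm{cub}}(S)+n_{\mathrm{cub}}(\mathrm{pt})=n_{\mathrm{cub}}(S)$. Every smooth projective surface is an iterated point blowup of a minimal surface, so it suffices to treat minimal surfaces. Moreover, $n_{\mathrm{cub}}(\PP^2)=n_{\mathrm{cub}}(\Bl_p\PP^2)$, and $\Bl_p\PP^2$ is a $\PP^1$-bundle over $\PP^1$. So the cases are $\PP^2$, ruled surfaces over curves, and surfaces with $\kappa\ge 0$.

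\textbf{Rational surfaces.} For a rational surface, and in particular $\PP^2$ and the Hirzebruch surfaces, all cohomology is Hodge and algebraic. Then every primary factor has $r_\Sigma=\rho_\Sigma$, which is incompatible with $r_\Sigma=12$, $\rho_\Sigma=2$.

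\textbf{Surfaces with $\kappa\ge0$.} The curve classes with nonzero genus-zero invariants are limited, and I expect the quantum product to deform only mildly. For $K$ nef, the virtual dimension of $\overline M_{0,n}(S,\beta)$ is $-K_S\cdot\beta+n-1$, which is at most $n-1$. The relevant invariants can then be analysed; for $K3$ and abelian surfaces all nontrivial genus-zero invariants vanish. My first attempt is a structural argument valid for all $S$ with $\kappa\ge0$. The Euler field is $\Eu=c_1+\sum(1-d_i/2)T_i\alpha_i$, and at $T=0$ I would show that $\kappa$ is nilpotent on the whole cohomology. Then there is a single primary factor, containing all of $H^*(S)$, with $\rho=h+1=\rho(S)+2$. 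This equals $2$ only if $\rho(S)=0$, which is impossible for a projective surface since $\rho(S)\ge1$. So no factor has $\rho_\Sigma=2$.

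\textbf{Ruled surfaces.} For a ruled surface $\PP(V)\to C$ with $g(C)\ge1$, I would use the projective bundle formula in quantum cohomology, or a degeneration to $C\times\PP^1$. This should give two primary factors, each isomorphic to the curve factor of $C$ tensored with a line factor, as in Lemma~\ref{lem:curve-vanishing}. Their residue polynomial is $(t-1/2)^2\ne P_{\mathrm{cub}}$. For $g(C)\le1$, the total cohomology rank is at most $8<12$.

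\textbf{Main obstacle.} The hardest step is making the minimal-model case rigorous, since Corollary~\ref{Cor:local-count} needs an honest local computation at a point of $U_X$. Two points need care. First, for $\kappa\ge0$ the locus $U_X$ might have more than one eigenvalue after moving in $T_0$ or the higher parameters. I would handle this with the string equation, which shifts all eigenvalues by the same scalar, so the count of distinct eigenvalues is the same as at $T=0$. Second, for nontrivially ruled surfaces over a curve of genus $g\ge2$, the decomposition must be shown to be deformation-equivalent to the product $C\times\PP^1$. I would use deformation invariance of Gromov--Witten invariants together with the invariance properties in Proposition~\ref{prop:invariance}.
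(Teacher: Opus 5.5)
Your skeleton matches the paper's. You reduce to connected $S$, dispose of points and curves, and use Corollary~\ref{cor:blowup} with $n_{\mathrm{cub}}(\mathrm{pt})=0$ to pass to a minimal model. You then split into $\PP^2$, ruled surfaces, and $K_S$ nef, and exclude the last case because there is a single primary factor of Hodge-class rank $\rho(S)+2\ge3$. Two of your steps, however, do not go through as you propose them.

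\textbf{The $K_S$ nef case.} You show nilpotency of $\kappa$ only at $T=0$ and then move off $T=0$ with the string equation. The string equation controls only the $T_0$ direction. But $B_S$ also has the coordinate $T_{\mathrm{pt}}$, the coefficient of $[\mathrm{pt}]$, of degree $-2$, and the string equation says nothing about it. So you only know there is one eigenvalue along the non-open locus $\{T_{\mathrm{pt}}=0\}$. That does not determine $r$ via the identity principle. Nor can you apply Corollary~\ref{Cor:local-count} at $T=0$, since its hypothesis excludes a factor of Hodge-class rank $\ge3$.

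What you need is nilpotency of $\kappa_\tau-T_0I$ on all of $B_S$, which the paper takes from \cite[Example~13 and Proposition~50]{Guere}. Your own degree count gives it. By the fundamental class axiom, the $T_0$-dependence of $\kappa_\tau$ is exactly $T_0I$. The $\phi_k$-coefficient of $(\kappa_\tau-T_0I)\phi_j$ is homogeneous of degree $2+e_j-e_k$ in monomials $Q^\beta T_{\mathrm{pt}}^n$. When $K_S$ is nef, each such monomial has degree $2c_1(S)\cdot\beta-2n\le0$. So the coefficient vanishes unless $e_k\ge e_j+2$, and $\kappa_\tau-T_0I$ strictly raises degree at every point. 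This uses $K_S$ nef, i.e.\ minimality, not merely $\kappa(S)\ge0$ as your phrasing suggests.

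\textbf{Ruled surfaces over $C$ with $g(C)\ge2$.} Your deformation route fails. $\PP(V)$ with $\deg V$ odd is the nontrivial $S^2$-bundle over $C$, which has odd intersection form. It is therefore not even diffeomorphic to $C\times\PP^1$, so deformation invariance of Gromov--Witten invariants cannot identify the two. The quantum projective bundle formula would require redoing the analytic, Hodge and lattice comparisons of Theorem~\ref{thm:local-comparison} for a new decomposition.

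Neither is needed. Every ruled surface over $C$ is connected to $C\times\PP^1$ by elementary transformations, i.e.\ point blowups and blowdowns. The point-blowup invariance you already established then gives $n_{\mathrm{cub}}(\PP(V))=n_{\mathrm{cub}}(C\times\PP^1)=0$ by Lemma~\ref{lem:curve-vanishing}. This is the paper's argument. It also subsumes your separate treatments of rational surfaces and of $g(C)\le1$, which are correct but unnecessary.
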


\begin{proof}
By additivity over connected components, we may assume that $S$ is connected. Points have rank one, and curves are covered by Lemma~\ref{lem:curve-vanishing}.

Suppose that $S$ is a surface. Blowing up a point does not change $n_{\mathrm{cub}}$ by Corollary~\ref{cor:blowup}, since $n_{\mathrm{cub}}(\mathrm{pt})=0$. We may therefore pass to a minimal model. By the classification of minimal surfaces  \cite[Corollary~D.4.4]{Reid}, it is either $\PP^2$, a ruled surface, or has nef canonical class.

For $\PP^2$, the full cohomology has rank three, so $n_{\mathrm{cub}}(\PP^2)=0$. A ruled surface is birational to $C\times\PP^1$ for some smooth projective curve $C$. A weak factorization between smooth projective surfaces uses only point blowups and blowdowns, so Corollary~\ref{cor:blowup} and Lemma~\ref{lem:curve-vanishing} give vanishing for every ruled surface.

Finally, suppose that $K_S$ is nef. By \cite[Example~13 and Proposition~50]{Guere}, the big quantum operator $\kappa_\tau^S-T_0I$ is nilpotent; compare the degree argument in \cite[proof of Lemma~5.18]{KKPY}. Hence the reduced spectrum consists of a single eigenvalue and there is only one primary factor. Its Hodge-class part contains $1,H,[\mathrm{pt}]$ for any ample divisor class $H$, and therefore has Hodge-class rank at least three. It is consequently not counted by Definition~\ref{def:nu}. Thus $n_{\mathrm{cub}}(S)=0.$
\end{proof}

\begin{corollary}\label{cor:birational-invariance}
The count $n_{\mathrm{cub}}$ is a birational invariant of smooth projective complex varieties of dimension at most four. It vanishes on rational varieties of these dimensions.
\end{corollary}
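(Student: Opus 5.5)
The plan is to combine the blowup formula of Corollary~\ref{cor:blowup} with the weak factorization theorem of Abramovich--Karu--Matsuki--W{\l}odarczyk. Let $X_1\dashrightarrow X_2$ be a birational map between smooth projective complex varieties of dimension $n\le4$. Since $n_{\mathrm{cub}}$ is additive over connected components, we may assume both are connected. Weak factorization then provides a chain
\[
X_1=V_0\dashrightarrow V_1\dashrightarrow\cdots\dashrightarrow V_k=X_2 .
\]
Each step is either a blowup or a blowdown along a smooth center. So it suffices to prove $n_{\mathrm{cub}}(\Bl_ZV)=n_{\mathrm{cub}}(V)$ for a smooth projective $V$ of dimension $n\le4$ and a smooth center $Z$ of codimension $r\ge2$. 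We may decompose $Z$ into connected components, blowing them up successively as in the proof of Corollary~\ref{cor:blowup}.

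By Corollary~\ref{cor:blowup},
\[
n_{\mathrm{cub}}(\Bl_ZV)=n_{\mathrm{cub}}(V)+(r-1)\,n_{\mathrm{cub}}(Z),
\]
so we need $n_{\mathrm{cub}}(Z)=0$. Since $r\ge2$ and $n\le4$, we have $\dim Z=n-r\le2$. Proposition~\ref{prop:surface} then gives $n_{\mathrm{cub}}(Z)=0$. Every step of the chain therefore preserves $n_{\mathrm{cub}}$, and birational invariance follows by induction along the chain.

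For the vanishing statement, let $X$ be rational of dimension $n\le4$. Then $X$ is birational to $\PP^n$, so $n_{\mathrm{cub}}(X)=n_{\mathrm{cub}}(\PP^n)$. The full cohomology of $\PP^n$ has rank $n+1\le5<12$, so no primary factor can have full rank $12$. Hence no component of $\widetilde U_{\PP^n}$ is counted in Definition~\ref{def:nu}, and $n_{\mathrm{cub}}(\PP^n)=0$.

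The main point to check is that the intermediate varieties $V_i$ in the weak factorization are smooth projective of the same dimension and that every center is smooth. Only then do Corollary~\ref{cor:blowup} and the dimension bound $\dim Z\le2$ apply. Projectivity of all $V_i$ and smoothness of the centers are part of the standard statement of weak factorization over $\CC$ for projective varieties, so this is a citation rather than an obstacle. The one subtlety is that centers may be disconnected or have components of different codimensions. Treating each connected component as a separate blowup handles this, since each component still has dimension at most two.
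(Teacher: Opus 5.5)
Your proposal is correct and follows the paper's proof essentially verbatim. The ingredients match: weak factorization into blowups and blowdowns along smooth centers, the bound $\dim Z\le 2$ for centers of codimension at least two in dimension at most four, Proposition~\ref{prop:surface} combined with Corollary~\ref{cor:blowup} at each step, and the rank count $\dim_\QQ H^*(\PP^d,\QQ)=d+1<12$ for the vanishing on rational varieties.

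One point should be stated explicitly, as the paper does: blowups along divisorial components are isomorphisms and can be discarded. That is what justifies restricting to centers of codimension $r\ge2$, and hence your claim that every remaining component has dimension at most two.
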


\begin{proof}
    By weak factorization \cite[Theorem~0.1.1]{AKMW}, a birational map between smooth projective varieties factors into blowups and blowdowns along smooth centers. After discarding blowups along divisors, every center has codimension at least two, hence dimension at most two. Its count vanishes by Proposition~\ref{prop:surface}, so Corollary~\ref{cor:blowup} shows that every step preserves $n_{\mathrm{cub}}$. 

    Finally, $\dim_\QQ H^*(\PP^d,\QQ)=d+1<12$ for $d\leq4$, so $n_{\mathrm{cub}}(\PP^d)=0$. This proves the assertion for rational varieties.
\end{proof}

\begin{proof}[Proof of Theorem~\ref{thm:main}]
By Proposition~\ref{prop:cubic-product}, $n_{\mathrm{cub}}(Y\times\PP^1)=2$. If $Y\times\PP^1$ were rational, its count would vanish by Corollary~\ref{cor:birational-invariance}, a contradiction.
\end{proof}

\begin{remark}\label{rem:dimension-bound}
The dimension bound in Corollary~\ref{cor:birational-invariance} is sharp. Embed a smooth cubic threefold $Y\subset\PP^4\subset\PP^5$ and put $W=\Bl_Y\PP^5$. Then $W$ is a smooth rational fivefold, whereas 
$$ n_{\mathrm{cub}}(W)  =n_{\mathrm{cub}}(\PP^5)+n_{\mathrm{cub}}(Y)=1$$
by Corollary~\ref{cor:blowup} and Proposition~\ref{prop:cubic-product}. Thus $n_{\mathrm{cub}}$ is not a birational invariant in dimension five.
\end{remark}

\section{Applications}\label{sec:applications}

\begin{proof}[Proof of Corollary~\ref{cor:fiber product}]
Let $l\geq1$ be the largest integer for which $Y\times\PP^l$ is not rational. Such an integer exists by stable rationality; once $Y\times\PP^r$ is rational, so is $Y\times\PP^{r'}$ for every $r'>r$. Set $X=Y\times\PP^l$ and write $n=\dim Y$.

For $m\geq2$, one has $lm>l$. Since $(\PP^l)^m\overset{\mathrm{bir}}{\sim}\PP^{lm}$, we may successively use the rationality of $Y\times\PP^r$ for $r>l$ to obtain
\[X^m\overset{\mathrm{bir}}{\sim}Y^m\times\PP^{lm} \overset{\mathrm{bir}}{\sim}Y^{m-1}\times\PP^{lm+n} \overset{\mathrm{bir}}{\sim}\cdots \overset{\mathrm{bir}}{\sim}\PP^{m(l+n)}.\]
Thus $X^m$ is rational for every $m\geq2$, while $X$ is not rational
by construction.
\end{proof}

\begin{proof}[Proof of Corollary~\ref{cor:applications}]
Choose a smooth stably rational cubic threefold $Y$ by \cite[Theorem~1.3]{TZ}. By Theorem~\ref{thm:main} and Corollary~\ref{cor:fiber product}, there is an integer $l\geq1$ such that $X=Y\times\PP^l$ is not rational and $X^m$ is rational for every $m\geq2$. By the choice of $l$, the variety $Y\times\PP^a$ is rational whenever $a>l$. The identities
$$\Sym^2(Y\times\PP^l) \overset{\mathrm{bir}}{\sim}\Sym^2Y\times\PP^{2l}  \overset{\mathrm{bir}}{\sim}Y\times\PP^{2l+3}$$
follow from \cite[Lemma~2.4]{kol18} and \cite[Exercise~3.10]{KollarCubicNotes}, respectively. Since $2l+3>l$, this proves that $\Sym^2X$ is rational. $X$ is clearly Fano and stably rational because $Y$ is.
\end{proof}

\begin{corollary}\label{cor:fano-fourfold}
For every smooth cubic threefold $Y\subset\PP^4_{\CC}$, the product
$Y\times\PP^1$ is a smooth irrational Fano fourfold. Moreover,
\[\rho(Y\times\PP^1)=2,\qquad\iota(Y\times\PP^1)=2,\qquad (-K_{Y\times\PP^1})^4=192,\]
where $\rho$ denotes the Picard rank and $\iota$ the Fano index. Some of these fourfolds are stably rational.
\end{corollary}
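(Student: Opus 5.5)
The statement has three independent parts: irrationality with Fano-ness, the numerical invariants, and stable rationality of some members. I will verify them in that order.

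\emph{Fano and irrational.} Irrationality is exactly Theorem~\ref{thm:main}. For Fano-ness, I use that $Y\times\PP^1$ is smooth and projective, with
\[
-K_{Y\times\PP^1}=\pr_1^*(-K_Y)+\pr_2^*(-K_{\PP^1}) .
\]
By adjunction in $\PP^4$ we have $-K_Y=2H$, where $H$ is the hyperplane class, and $-K_{\PP^1}=2h$. Hence $-K=2(H+h)$, with $H$ and $h$ pulled back. The class $H+h$ is ample, being the pullback of $\cO(1,1)$ under the Segre-type embedding, so $Y\times\PP^1$ is Fano.

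\emph{Picard rank and index.} Since $H^1(Y,\cO_Y)=0$, the Picard group of a product with $\PP^1$ splits:
\[
\Pic(Y\times\PP^1)=\Pic Y\oplus\Pic\PP^1 .
\]
By Lefschetz, $\Pic Y=\ZZ H$ for a smooth cubic threefold. Therefore $\Pic(Y\times\PP^1)=\ZZ H\oplus\ZZ h$ and $\rho=2$. The Fano index is the largest $k$ with $-K\in k\Pic$. Since $-K=2H+2h$ in this basis, we get $\iota=\gcd(2,2)=2$.

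\emph{Degree.} I compute
\[
(-K)^4=16\,(H+h)^4 .
\]
Because $h^2=0$, only two terms survive: $(H+h)^4=4H^3h$. On $Y\times\PP^1$ we have $H^3h=\deg Y\cdot 1=3$, so $(H+h)^4=12$ and $(-K)^4=16\cdot12=192$.

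\emph{Stable rationality.} I take $Y$ stably rational, as provided by \cite[Theorem~1.3]{TZ}. Then $Y\times\PP^1$ is stably rational, since $(Y\times\PP^1)\times\PP^n=Y\times\PP^{n+1}$ up to birational equivalence.

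None of these steps is a genuine obstacle; the only input needing care is the Picard computation, where the two facts to cite are the vanishing $H^1(Y,\cO_Y)=0$ and the Lefschetz hyperplane theorem.
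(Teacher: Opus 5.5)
Your proposal is correct and follows the paper's proof step for step. Adjunction gives $-K=2(H+h)$, Lefschetz together with the splitting of $\Pic$ gives $\rho=2$ and index $2$, and the degree is $16\cdot 4H^3h=192$; the other surviving term $H^4$ vanishes because $H$ is pulled back from a threefold. \cite[Theorem~1.3]{TZ} supplies the stably rational examples, and the only cosmetic difference is that you justify $\Pic(Y\times\PP^1)=\Pic Y\oplus\Pic\PP^1$ via $H^1(Y,\cO_Y)=0$ rather than the projective bundle formula the paper cites, which is equally valid.
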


\begin{proof}
Put $A=p_1^*H$ and $L=p_2^*c_1(\cO_{\PP^1}(1))$. By adjunction, $-K_{Y\times\PP^1}=2(A+L),$ which is ample. By the Lefschetz hyperplane theorem, $\Pic(Y)=\ZZ H$, and the projective bundle formula gives $ \Pic(Y\times\PP^1)=\ZZ A\oplus\ZZ L.$ Thus $\rho(Y\times\PP^1)=2$. Since $A+L$ is primitive in the Picard group, the Fano index is exactly two. Finally, using $H^3=3$ and $L^2=0$,
\[(-K_{Y\times\PP^1})^4=16(A+L)^4=64A^3L=192.\]
Irrationality follows from Theorem~\ref{thm:main}. Choosing $Y$ as in \cite[Theorem~1.3]{TZ} gives the stably rational examples.
\end{proof}

\begin{remark}[The $V_{14}$ case]\label{rem:v14}
Let $X=V_{14}$ be a smooth Fano threefold of Picard rank one, index one, and genus eight. Its associated Pfaffian cubic threefold $Y$ is smooth by \cite[Lemma~2.1 and Proposition~A.4]{KuznetsovV14}, and is birational to $X$ by \cite[Remark~2.19]{KuznetsovV14}. Hence $X\times\PP^1 \overset{\mathrm{bir}}{\sim} Y\times\PP^1,$ so $X\times\PP^1$ is not rational by Theorem~\ref{thm:main}.
\end{remark}

\begingroup
\emergencystretch=2em
\printbibliography
\endgroup
\end{document}